\newtheorem{thm}{Theorem}[section]
\newtheorem{cor}[thm]{Corollary}
\newtheorem{lem}[thm]{Lemma}
\newtheorem{prop}[thm]{Proposition}
\newtheorem{defn}[thm]{Definition}
\newtheorem{exa}[thm]{Example}
\newtheorem{exas}[thm]{Examples}
\newtheorem{rem}[thm]{Remark}
\newtheorem{rems}[thm]{Remarks}
\numberwithin{equation}{section}
\newcommand\Field{\mathbb F}
\newcommand\Dual{\mathcal D}
\newcommand\Duality\Dual
\newcommand\ModSphere{\ModFlow\left({\mathbb S}\longrightarrow 
\Sym^{g-1}(\Sigma_{1})\times \Sym^2(\Sigma_{2})\right)}
\newcommand\ModSpheres\ModSphere
\newcommand\HFm{\HF^-}
\newcommand\UnparModSp{\widehat \ModSp}
\newcommand\UnparModFlow\UnparModSp
\newcommand\Mod\ModSp
\newcommand\ModMaps{\mathcal M}
\newcommand\ModSp\ModMaps
\newcommand{\bfz}{{\mathbb {Z}}}
\newcommand{\Zmod}[1]{{\mathbb Z}/{#1}{\mathbb Z}}
\newcommand{\uuinv}{U^{-1},U}
\newcommand{\Sym}{{\mathrm {Sym}}}
\newcommand{\s}{\mathbf s}   \renewcommand{\t}{\mathbf t}
 \newcommand{\Z}{\mathbb Z}   \newcommand{\R}{\mathbb R}
\newcommand{\grad}{{\rm {gr}}}
\newcommand\Vertices{\mathrm{Vert}}
\DeclareMathOperator{\Id}{Id}
\begin{document}

\title{Knots in lattice homology}

\author{Peter Ozsv\'ath}
\address{Department of Mathematics, Princeton University\\
Princeton, NJ, 08544}
\email{petero@math.princeton.edu}

\author{Andr\'{a}s I. Stipsicz}
\address{R{\'e}nyi Institute of Mathematics\\
Budapest, Hungary \\
and Institute for Advanced Study, Princeton, NJ, 08540}
\email{stipsicz@math-inst.hu}

\author{Zolt\'an Szab\'o}
\address{Department of Mathematics, Princeton University\\
Princeton, NJ, 08544}
\email{szabo@math.princeton.edu}

\subjclass{57R, 57M} \keywords{Lattice homology, Heegaard Floer
  homology, knot Floer homology}

\begin{abstract}
  Assume that $\Gamma _{v_0}$ is a tree with vertex set $\Vertices
  (\Gamma _{v_0})=\{ v_0, v_1, \ldots , v_n \}$, and with an integral framing
  (weight) attached to each vertex except $v_0$. Assume furthermore that the
  intersection matrix of $G=\Gamma _{v_0}-\{ v_0\}$ is negative
  definite. We define a filtration on the chain
  complex computing the lattice homology of $G$ and show how to use
  this information in computing lattice homology groups of a negative
  definite graph we get by attaching some framing to $v_0$. As a
  simple application we produce families of graphs which have
  arbitrarily many bad vertices for which the lattice homology groups
  are shown to be isomorphic to the corresponding Heegaard Floer
  homology groups.
\end{abstract}
\maketitle

\newcommand\CFinfComb{{\mathbb{CF}}^{\infty}}
\newcommand\HFinfComb{{\mathbb{HF}}^{\infty}}
\newcommand\CFmComb{{\mathbb{CF}}^{-}}
\newcommand\CFpComb{{\mathbb{CF}}^{+}}
\newcommand\HFmComb{{\mathbb{HF}}^{-}}
\newcommand\HF{\rm {HF}}
\newcommand\HFpComb{{\mathbb{HF}}^{+}}
\newcommand\HFKmComb{{\mathbb{HFK}}^{-}}
\newcommand\HFKinfComb{{\mathbb{HFK}}^{\infty}}
\newcommand\MCFinfComb{{\mathbb{MCF}}^{\infty}}
\newcommand\CFaComb{\widehat {\mathbb{CF}}}
\newcommand\CFoverComb{\overline {\mathbb{CF}}}
\newcommand\HFaComb{\widehat {\mathbb{HF}}}
\newcommand\HFoverComb{\overline {\mathbb{HF}}}
\newcommand\HFKaComb{\widehat {\mathbb{HFK}}}
\newcommand\Char{\mathrm{Char}}

\section{Introduction}
It is an eminent problem in low dimensional topology to find simple
computational schemes for the recently defined invariants
(e.g. Heegaard Floer and Monopole Floer homologies) of 3- and
4-manifolds. In particular, the minus-version $\HF ^-$ of Heegaard
Floer homology (defined over the polynomial ring $\Field [U]$, where
$\Field$ denotes either $\Z$ or the field $\Z /2\Z$ of two elements)
is of central importance. In \cite{MOT} a computational scheme for
the $\HF ^-$ groups was presented, which is rather hard to implement
in practice. This result was preceded by a more practical way
of determining these invariants for those 3-manifolds which can be
presented as boundary of a plumbing of spheres along a negative
definite tree which has at most one bad vertex \cite{OSzplum}. The
idea of \cite{OSzplum} was subsequently extended by N\'emethi
\cite{nemethi-ar}, and in \cite{lattice} a new invariant,
\emph{lattice homology} was proposed.  It has been conjectured that
lattice homology determines the Heegaard Floer groups when the
underlying 3-manifold is given by a negative definite plumbing of
spheres along a tree. Common features (eg. the existence of surgery
exact triangles) have been verified for the two theories (in
\cite{OSzF1} for the Heegaard Floer setting, while in \cite{Josh,
  latticetriangle} for lattice homology), and the existence of a
spectral sequence connecting the two theories has been found
\cite{OSSZlatt}. For further related results see
\cite{properties-lattice, s3csomok}.

In the present work we extend these similarities by introducing
filtrations on lattice homologies induced by vertices, mimicking the
ideas of knot Floer homologies developed in the Heegaard Floer context
in \cite{OSzknot, Ras}.  This information then (just as in the Heegaard
Floer context) can be conveniently used to determine the lattice
homology of the graph when the distinguished vertex is equipped with
some framing (corresponding to the surgery formulae in Heegaard Floer
theory, cf. \cite{OSzint}).

In more concrete terms, suppose that $\Gamma _{v_0}$ is a given tree
(or forest), with each vertex $v$ in $\Vertices (\Gamma _{v_0})-v_0$
equipped with a framing (or weight) $m_v\in \Z$. Let $G$ denote the tree (or
forest) we get by deleting $v_0$ and the edges emanating from it.
Suppose that $G$ is negative definite. We will define the \emph{master
  complex} $\MCFinfComb (\Gamma _{v_0})$ of $\Gamma _{v_0}$, which is a 
filtration on the chain complex defining the lattice homology of $G$
together with a specific map, and will show
\begin{thm}
The master complex $\MCFinfComb  (\Gamma _{v_0})$  determines the lattice
  homology of all negative definite framed trees (or forests) we get
  from $\Gamma _{v_0}$ by attaching framings to $v_0$.
\end{thm}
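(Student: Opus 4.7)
My plan is to mimic the integer surgery formula from knot Floer homology \cite{OSzint}, with the master complex $\MCFinfComb(\Gamma_{v_0})$ playing the role of the knot Floer complex $CFK^\infty$ and with the framing $m$ of $v_0$ playing the role of the surgery coefficient. The key combinatorial observation is that a characteristic vector $K$ for the intersection form of $\Gamma_{v_0}$ framed by $m$ is uniquely determined by its restriction $K_0 := K|_G$ (a characteristic vector of $G$) together with its $v_0$-component, which may be written $K(v_0) = 2j + m$ for a unique $j \in \Z$. This sets up a bijection $\Char(\Gamma_{v_0,m}) \longleftrightarrow \Char(G) \times \Z$.

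Under this bijection, I would express the weight function $f_m$ of the extended plumbing explicitly in terms of $K_0$ and $j$. A direct computation using the block decomposition of the intersection form gives a formula of the shape
\begin{equation*}
f_m(K_0, j) \;=\; f_G(K_0) \;+\; \tfrac{1}{4}\bigl( (2j+m)^2/m_{v_0} \;-\; \text{(coupling terms between $j$ and $K_0$ through the neighbors of $v_0$)}\bigr),
\end{equation*}
where the coupling terms depend only on the values of $K_0$ on the vertices adjacent to $v_0$. The filtration on $\MCFinfComb(\Gamma_{v_0})$ is designed to record precisely this $j$-dependence of the weight shift on $\CFinfComb(G)$, while the specific map in the master complex records the cube edge contribution in the $v_0$-direction. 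With these encoded, I would then assemble a chain complex $A_m$ out of the master complex: one copy of $\CFinfComb(G)$ for each $j \in \Z$, with internal $G$-differentials shifted by the filtration, and additional cube edges in the $v_0$-direction supplied by the specific map. The claim is that $A_m$ is, by construction, the lattice chain complex of the framed tree, so its homology is the lattice homology of $\Gamma_{v_0}$ with framing $m$.

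The main obstacle is verifying that the filtration together with the single specific map is enough to reconstruct \emph{all} cube differentials involving $v_0$ for \emph{every} admissible framing $m$ — in other words, that no information is lost in the passage from the extended lattice complex to the master complex. This requires a careful inspection of how the higher-dimensional cubes of the lattice complex that include $v_0$ factor through pairs of cubes on $G$ and a one-dimensional edge in the $v_0$-direction, and checking that the negative definiteness hypothesis on $G$ (and on $\Gamma_{v_0}$ after framing) provides the boundedness needed for the formal assembly to converge and to agree with the honest lattice chain complex of the enlarged graph. Once this combinatorial identification is in place, the theorem follows formally, since the output $A_m$ is manufactured from $\MCFinfComb(\Gamma_{v_0})$ and $m$ alone.
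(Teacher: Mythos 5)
Your strategy is in fact the paper's strategy: split a characteristic vector of the framed graph into its restriction to $G$ and its value on $v_0$, and rebuild the lattice complex of $G_{m_0}(v_0)$ as a surgery-type mapping cone manufactured from the filtered data, in the spirit of the integer surgery formula. But the step you set aside as ``the main obstacle'' is the entire mathematical content of the theorem, and as sketched your assembly is not the lattice complex of the framed graph. Two concrete problems. First, the generators $[L,H]$ with $v_0\in H$ do not form copies of $\CFinfComb (G)$ with differentials merely ``shifted by the filtration'': the intersection form of the framed graph is not block diagonal, and the coupling through the neighbours of $v_0$ enters the \emph{minima} defining the exponents $a_v$ and $b_v$, not just an overall additive shift of the weight. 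For fixed $i=\frac{1}{2}(L(\Sigma)+\Sigma^2)$ these generators form a complex $T_i$ which has to be identified not with $\CFinfComb (G)$ but with the truncated subcomplex $S_i=\{U^j\otimes[K,E]\mid j\ge 0,\ A(U^j\otimes[K,E])\le i\}$ cut out by the double filtration, via the element-dependent shift $[L,H]\mapsto U^{a_{v_0}[L,H]}\otimes[L\vert_G,H-v_0]$. That this map lands in $S_i$, is a chain map, and is bijective rests on the identity $\frac{1}{2}(L(\Sigma)+\Sigma^2)-A\bigl(U^{a_{v_0}[L,H]}\otimes[L\vert_G,H-v_0]\bigr)=b_{v_0}[L,H]$ together with a separate surjectivity argument; none of this appears in your sketch, and if you literally take full copies of $\CFinfComb (G)$ indexed by $j$ you get the wrong complex. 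Second, the two $v_0$-edge maps $[L,H]\mapsto U^{a_{v_0}[L,H]}\otimes[L,H-v_0]$ and $[L,H]\mapsto U^{b_{v_0}[L,H]}\otimes[L+2v_0^*,H-v_0]$ must be re-expressed intrinsically in terms of the master complex: under the identifications above the first becomes the inclusion $S_i\subset\CFmComb (G)$, while the second becomes the inclusion of $S_i$ into $V_i=\{A\le i\}$ followed by $U^{i-i_{\s}}N$, using that $N$ identifies $V_i(\s)$ with $\CFmComb (G,\s_{v_0})$. This is exactly where the map $N$ (with its spin$^c$ shift by $2v_0^*$) is needed, and ``the specific map supplies the cube edge'' is only a slogan until this identity is verified.

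Two smaller points. In this model of lattice homology the differential removes one vertex at a time, so there are no higher cube differentials to ``factor''; the whole difficulty is concentrated in the $U$-exponents, not in cube combinatorics. And in the negative definite $\CFmComb$-setting there is no convergence issue to check: the mapping cone is the direct sum of the $T_i$ and of the $B_i\cong\CFmComb (G,(\t,i)\vert_G)$, with no completion required (negative definiteness of $G$ is used earlier, to define the Alexander filtration without completing, not to make the assembly converge). Once the identifications above are actually proved, the theorem does follow formally, as you say; without them the proposal is an outline of the argument rather than a proof.
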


By identifying the filtered chain homotopy type of the resulting
master complex with the knot Floer homology of the
corresponding knot in the plumbed 3-manifold, this method allows us to
show that certain graphs have identical lattice and Heegaard Floer
homologies.  A connected sum formula then enables us to extend this
method to further graphs, including some with arbitrarily many bad
vertices.  As an example, we show
\begin{thm}\label{thm:pelda}
Consider the plumbing graph of Figure~\ref{fig:connsum} on
$3n+1$ vertices, with the framing of $v_0$ an integer at most 
$-6n-1$. Then the lattice homology
of the graph is isomorphic to the Heegaard Floer
homology $\HFm$ of the 3-manifold defined by the plumbing.
\end{thm}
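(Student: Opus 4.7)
The plan is to combine three ingredients: the equality of lattice homology and $\HFm$ for plumbings whose graph has at most one bad vertex, due to Ozsv\'ath--Szab\'o and N\'emethi \cite{OSzplum, nemethi-ar}; the identification in the body of the paper of the master complex $\MCFinfComb$ with a knot Floer complex; and a connected sum formula for master complexes. Together with the first theorem stated above, which recovers the lattice homology of any negative definite framed extension from the master complex of $\Gamma_{v_0}$, these ingredients produce the claimed isomorphism.

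First, I would analyze the graph of Figure~\ref{fig:connsum}: removing $v_0$ produces a forest $G$ which decomposes as a disjoint union $G_1 \sqcup \cdots \sqcup G_n$ of $n$ three-vertex trees. Each one-armed subconfiguration $\Gamma_i = G_i \cup \{v_0\}$ (with the arm attaching to $v_0$) is a tree in which $G_i$ has at most one bad vertex, and $G_i$ is negative definite. By \cite{OSzplum, nemethi-ar}, lattice homology equals Heegaard Floer $\HFm$ for every negative definite framed extension of $\Gamma_i$; correspondingly, the filtered chain homotopy type of $\MCFinfComb(\Gamma_i)$ can be identified with the knot Floer complex $\HFKmComb$ of the associated knot in the plumbed $3$-manifold, as established in the body of the paper.

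Next I would invoke the connected sum formula: both $\MCFinfComb(\Gamma_{v_0})$ and the knot Floer complex of the connected sum of the constituent knots split as a tensor product over $\Field[U]$ of the summand complexes. Hence the identification at the level of each $\Gamma_i$ bootstraps to an identification at the level of $\Gamma_{v_0}$. Applying the first theorem above on the lattice side, and the large surgery formula of \cite{OSzint} on the Heegaard Floer side, converts the master complex (respectively, the knot Floer complex) of $\Gamma_{v_0}$ into the lattice homology (respectively, $\HFm$) of the negative definite framed extension obtained by framing $v_0$ by $m_{v_0}$. The bound $m_{v_0}\le -6n-1$ is exactly what makes the total intersection form negative definite while simultaneously placing $m_{v_0}$ below the threshold required for the large surgery identification to hold on each side with the appropriate gradings.

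The main obstacle will be verifying the connected sum formula for the master complex as a filtered chain homotopy equivalence of $\Field[U]$-modules, and matching it term-by-term with the tensor product formula for knot Floer complexes under connected sum, so that the summand-level identifications assemble correctly. A secondary technical point is checking that the numerical threshold $-6n-1$ is sharp enough both to ensure negative definiteness of the full intersection matrix and to cover all the grading shifts arising in the large surgery formula. Notably, although the framed $\Gamma_{v_0}$ may harbor as many as $n$ bad vertices, the connected sum decomposition decouples their contributions, which is precisely what makes this family of examples interesting and what places them outside the scope of \cite{OSzplum, nemethi-ar} without the machinery developed here.
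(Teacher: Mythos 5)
Your overall strategy is the one the paper follows: identify the master complex of each trefoil arm with the corresponding knot Floer complex (this is Proposition~\ref{prop:mastcomp}, proved by the explicit computation of Section~\ref{sec:trefoil} --- it uses the one-bad-vertex result of \cite{OSzplum, nemethi-ar} as an input but does not follow from it ``correspondingly''; the filtered data, the differentials and the map $N$ all have to be pinned down by hand), then tensor the arms together via Theorem~\ref{thm:alexosszead} and the K\"unneth formula on the Floer side, and finally pass to the framed graph by a surgery formula on each side.

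The genuine gap is in the last step. You invoke the \emph{large} surgery formula of \cite{OSzint} and claim that the bound $m_{v_0}\le -6n-1$ places the framing below the large-surgery threshold. It does not. The class $\Sigma$ for one arm is $v_0+6v_1+3v_2+2v_3$, so the framed graph of Figure~\ref{fig:connsum} presents $(m_0+6n)$-surgery on the $n$-fold connected sum of right-handed trefoils; the hypothesis $m_0\le -6n-1$ is exactly equivalent to $m_0+6n\le -1$, i.e.\ to negative definiteness of the full intersection form, and nothing more. Since the connected sum has genus $n$, the large surgery identification requires a coefficient of absolute value at least $2n-1$, i.e.\ $m_0\le -8n+1$; for $n\ge 2$ the framings with $-8n+1<m_0\le -6n-1$ (for instance $m_0=-6n-1$ itself, giving $(-1)$-surgery) are covered by the theorem but not by large surgery, so your argument as written proves only a weaker statement. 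The repair is to use the full integer surgery (mapping cone) formula of \cite{OSzint} and to observe that the lattice-homology surgery complex of Theorem~\ref{thm:mutet} --- the mapping cone built from the complexes $T_i$, $B_i$ and the maps $\partial_1,\partial_2$, all reconstructed from $\MCFinfComb(\Gamma_{v_0})$ --- has exactly the same shape, so the filtered identification of master complexes transports to an isomorphism of the two surgery complexes for \emph{every} framing making the graph negative definite. With that substitution (and keeping the negative-definiteness reading of the bound $-6n-1$), your argument matches the paper's proof.
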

\begin{rem}
  Notice that the graph of Figure~\ref{fig:connsum} on $3n+1$ vertices
  (after we attach a framing $-m\leq -6n-1$ to the central vertex
  $v_0$) has $n$ bad vertices. The case of $n=2$ in the theorem was
  already proved by N\'emethi, cf. \cite[Example~4.4.1]{lattice}, see
  also \cite{s3csomok} for related results. For a more general result
  along similar lines, see \cite{OSSzlspace}.
\end{rem}

\begin{figure}[ht]
\begin{center}
\epsfig{file=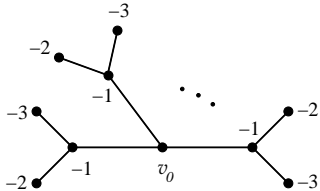, height=2.5cm}
\end{center}
\caption{{\bf The plumbing diagram of the $n$-fold connected sum of
    the (right-handed) trefoil knot in $S^3$.} The valency of the
  central vertex $v_0$ is assumed to be $n\in {\mathbb {N}}$, and each
  edge emanating from $v_0$ connects it to a vertex with framing
  $(-1)$. Furthermore these $(-1)$-vertices are connected to a $(-2)$-
  and a $(-3)$-framed leaf of the graph. Regarding $v_0$ as a circle
  in the plumbed 3-manifold defined by the rest of the graph, it can
  be identified with the $n$-fold connected sum of the trefoil knot in
  $S^3$.}
\label{fig:connsum}
\end{figure}

As an application of the connected sum formula, in an Appendix we give
an alternative proof of the following result of N\'emethi.
\begin{thm} \cite[Proposition~3.4.2]{lattice} 
\label{thm:indep}
Suppose that the two negative definite plumbing trees (or forests)
$G_1$ and $G_2$ define diffeomorphic 3-manifolds $Y_{G_1}$ and
$Y_{G_2}$.  Then the lattice homology $\HFmComb (G_1)$ of $G_1$ is
isomorphic to the lattice homology $\HFmComb (G_2)$ of $G_2$.  In
other words, the lattice homology is an invariant of the 3-manifold
defined by the plumbing graph.
 \end{thm}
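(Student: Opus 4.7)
The plan is to combine Neumann's plumbing calculus with the Main Theorem and the connected sum formula established earlier in the paper. By Neumann's theorem, any two negative definite plumbing forests presenting diffeomorphic 3-manifolds are connected by a finite sequence of elementary moves: $(R_0)$ adding or removing an isolated $(-1)$-vertex, $(R_1)$ adjoining or removing a $(-1)$-leaf to a vertex $v$ with a compensating $\pm 1$ shift of $v$'s framing, and $(R_2)$ inserting or removing a valence-two $(-1)$-vertex in the interior of an edge $\{u,v\}$ with the corresponding framing shifts on $u$ and $v$. It thus suffices to prove invariance of lattice homology under a single such elementary move taking $G$ to $G'$.

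For each type of move I would view the enlarged forest $G'$ as $\Gamma_{v_0}$ with the new $(-1)$-vertex in the role of the distinguished vertex, and apply the Main Theorem: the master complex $\MCFinfComb(\Gamma_{v_0})$ determines $\HFmComb(G')$ at framing $-1$ on $v_0$.  Using the connected sum formula, decompose $\MCFinfComb(\Gamma_{v_0})$ as a tensor product of the master complex of a small local model around $v_0$ and a complex built from the rest of $G$.  In case $(R_0)$ the local model is an isolated $(-1)$-vertex, whose lattice homology is that of $S^3$, namely $\Field[U]$, and the tensor product yields $\HFmComb(G)$.  In cases $(R_1)$ and $(R_2)$ the local model represents a meridian of the sphere at $v$ (or of the split edge); a direct computation identifies the $(-1)$-surgery on this meridional piece with the Kirby slam-dunk, which restores the original framing on $v$ (and $u$) and cancels the meridional vertex. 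In every case one obtains $\HFmComb(G') \cong \HFmComb(G)$, and the general statement follows by induction on the length of the Neumann sequence.

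The main obstacle is the explicit local-model computation in cases $(R_1)$ and $(R_2)$: verifying that the master complex of the meridional model, evaluated at framing $-1$, reproduces exactly the framing shift on the neighboring vertex predicted by the slam-dunk.  This is the lattice-homology shadow of the Heegaard Floer surgery formula \cite{OSzint} applied to an unknotted meridian, and it requires careful book-keeping of the filtration, $\Field[U]$-module structure, and gradings on both sides of the connected sum decomposition. Once this local verification is carried out, Neumann's calculus immediately delivers the conclusion that $\HFmComb$ depends only on the underlying 3-manifold.
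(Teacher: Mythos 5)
Your high-level strategy (Neumann's calculus plus the master complex, the surgery theorem and the connected sum formula) is the same as the paper's, but the proposal has a genuine gap exactly where the real work lies, and the way you set up the local reduction does not go through as stated. First, a structural problem: in move $(R_1)$ the new $(-1)$-vertex is a leaf, so taking it as the distinguished vertex gives no connected sum decomposition at all --- the connected sum formula of Section~\ref{sec:master} only splits the master complex at an unframed vertex joining two otherwise disjoint subgraphs. The paper instead takes the distinguished vertex to be the vertex $v$ being blown up, identifies $\Gamma^+_{v_0}$ and $\Gamma^d_{v_0}$ via Example~\ref{ex:trivik} and Corollary~\ref{c:egyenlok}, and accounts for the framing shift by tracking how the class $\Sigma$ changes ($v_0^*(\Sigma')=v_0^*(\Sigma'')+1$), which is how Theorem~\ref{thm:leaflefuj} is actually proved. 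In move $(R_2)$ the decomposition at the new valence-two vertex does exist, but both tensor factors are then large pieces of $G$ with shifted framings on $u$ and $v$, not a ``small local model,'' so the comparison with $\HFmComb(G)$ is not localized away.

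Second, and more seriously, your key step --- ``a direct computation identifies the $(-1)$-surgery on this meridional piece with the Kirby slam-dunk, which restores the original framing'' --- is circular in this setting: lattice homology is defined purely from the graph, and its compatibility with Kirby moves of $Y_G$ is precisely the statement being proved, so one cannot invoke the slam-dunk as a computational identity. What is actually needed is a filtered comparison of the two background complexes before and after inserting the $(-1)$-vertex next to $v$, i.e.\ the statement of Theorem~\ref{thm:elfelf}: the filtered chain complex $(\CFmComb(G),A_{v_0})$ of $\Gamma_{v_0}$ is filtered chain homotopy equivalent to that of $\Gamma'_{v_0'}$. Proving this occupies most of the Appendix: one needs the contraction map $C_v$ and Theorem~\ref{thm:contraction}, the blow-down and blow-up maps $P$ and $R$ with Theorem~\ref{thm:CombBlowUp}, the verification that $C\circ P$ and $R$ respect the Alexander filtration (Propositions~\ref{prop:respcp} and \ref{prop:respr}), and the hypothesis that $v$ is a good vertex, with the general case reduced to it by repeated double blow-ups as in Remark~\ref{rem:tobbblow}. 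None of this ``book-keeping'' is routine or follows formally from Theorem~\ref{thm:mutet} and the connected sum formula; without it your induction over Neumann moves does not get off the ground for $(R_1)$ and $(R_2)$.
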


 The paper is organized as follows. In Section~\ref{sec:lattice} we
 review the basics of lattice homology for negative definte graphs. In
 Sections~\ref{sec:knots} and \ref{sec:master} we introduce the knot
 filtration on the lattice chain complex of the background graph,
 describe the master complex and verify the connected sum formula.  In
 Section~\ref{sec:surgery} we show how to apply this information to
 determine the lattice homology of graphs we get by attaching various
 framings to the distinguished point $v_0$.  In
 Section~\ref{sec:trefoil} we determine the knot filtration in one
 specific example, and verify Theorem~\ref{thm:pelda}.  Finally in
 Section~\ref{sec:app} we give a proof of Theorem~\ref{thm:indep}
 (another proof appeared in \cite{latticetriangle}), and in
 Section~\ref{app:contract} we reprove a further result of N\'emethi
 \cite{lattice} stating that lattice homology is finitely generated as an
 $\Field [U]$-module.

\subsection*{Notation}
Suppose that $\Gamma$ is a tree (or forest), and $G$ is the same graph
equipped with framings, i.e. we attach integers to the vertices of
$\Gamma$. The plumbing of disk bundles over spheres defined by $G$
will be denoted by $X_G$, and its boundary 3-manifold is $Y_G$.  Let
$M_G$ denote the incidence matrix associated to $G$ (with framings in
the diagonal). This matrix presents the intersection form of $X_G$ in
the basis provided by the vertices of the plumbing graph.

Suppose that $\Gamma _{v_0}$ is a plumbing tree (or forest) with a
distinguished vertex $v_0$ which is left unframed (but all
other vertices of $\Gamma _{v_0}$ are framed). Let $G$ denote the plumbing
graph we get by deleting the vertex $v_0$ (and all the edges adjacent
to it). We will always assume that the plumbing trees/forests we work
with are negative definite.

\begin{rem}
We can regard the unknot defined by $v_0$ in the plumbing picture as a 
(not necessarily trivial) knot in the plumbed 3-manifold $Y_G$.
\end{rem}

Recall that for a negative definite tree (or forest) $G$ on the vertex
set $\Vertices (G)$ the vertex $v\in \Vertices (G)$ is a \emph{bad vertex}
if $m_v+d_v>0$, where $m_v$ denotes the framing attached to $v$ while
$d_v$ is the valency or degree of $v$ (the number of edges emanating
from $v$). A vertex is \emph{good} if it is not bad, that is,
$m_v+d_v\leq 0$.

{\bf Acknowledgements}: PSO was supported by NSF grant number
DMS-0804121.  AS was supported by OTKA NK81203, by the ERC Grant
LDTBud, by \emph{Lend\"ulet program} and by the Institute for Advanced
Study.  ZSz was supported by NSF grants DMS-0603940, DMS-0704053 and
DMS-1006006.

\section{Review of lattice homology}
\label{sec:lattice}

Lattice homology has been introduced by N\'emethi in \cite{lattice}
(cf. also \cite{properties-lattice, latticetriangle, s3csomok}). In
this section we review the basic notions and concepts of this theory.
Our main purpose is to set up notations which will be used in the rest
of the paper.

Following \cite{lattice}, for a given negative definite plumbing tree
$G$ we define a $\Z$-graded combinatorial chain complex $(
\CFinfComb(G), \partial )$ (and then a subcomplex $(\CFmComb
(G), \partial )$ of it), which is a module over the ring of Laurent
polynomials $\Field[\uuinv ]$ (and over the polynomial ring $\Field
[U]$, respectively), where we assume for simplicity that $\Field =
\Zmod{2}$.

Define $Char (G)$ as the set of characteristic cohomology elements
of $H^2 (X_G; \Z )$,  that is, 
\[
Char (G)=\{ K\colon H_2(X_G; \Z )\to \Z \mid K(x)\equiv x\cdot x \pmod{2}\}.
\]

The \emph{lattice chain complex} $\CFinfComb(G)$ is freely generated
over $\Field [\uuinv ]$ by the product $\Char(G)\times {\mathbb
  {P}}(\Vertices (G))$, that is, by elements $[K,E]$ where
$K\in\Char(G)\subset H^2 (X_G; \Z )$ and $E\subset\Vertices (G)$.  We
introduce a $\Z$-grading on this complex, called the {\em
  $\delta$-grading}, which is defined on the generator $[K,E]$ as the
number of elements in $E$. To define the boundary map of the chain complex, we proceed as
follows.  Given a subset $I\subset E$, we define the \emph{$G$-weight}
$f([K,I])$ as the quantity
\begin{equation}\label{eq:fdef}
2 f([K,I])= \left(\sum_{v\in I} K(v)\right) + \left(\sum_{v\in I}
  v\right) \cdot \left(\sum_{v\in I} v\right).
\end{equation}
\begin{rem}\label{rem:altfor}
  Using the fact that $G$ is negative definite, the integer $f([K,I])$
  can be easily shown to be equal to
\[
\frac{1}{8}\left( (K+\sum _{v\in I} 2v^*)^2-K^2 \right) ,
\]
where $v^*\in H^2 (X_G, Y_G; \Z )$ denotes the Poincar\'e dual of the
class $v\in H_2(X_G; \Z )$ corresponding to the vertex $v\in \Vertices
(G)$. This form of $f(K,I)$ immediately implies, for example, the
following useful identity: if $I\subset E$ then
\begin{equation}\label{eq:useful}
f([K, I])-f([-K-\sum _{u\in E}2u^*, E-I])=f([K,E]).
\end{equation}
\end{rem}

We define the \emph{minimal $G$-weight} $g([K,E])$ of $[K,E]$ by the formula
\[
g([K,E])=\min \{ f([K,I])\mid {I\subset E} \}.
\]
The quantities $A_v([K,E])$ and $B_v([K,E])$ are defined as follows:
\begin{eqnarray*}             
  A_v([K,E])=g([K,E-v]) &{\text{and}} &                                      
  B_v([K,E])=\min \{  f([K,I]) \mid {v\in I\subset E}\} . 
\end{eqnarray*}
A simple argument shows that
\begin{equation}\label{eq:refor}
B_v([K,E])=\left(\frac{K(v)+v^2}{2}\right) + g([K+2v^*,E-v]).
\end{equation}
It follows trivially from the definition that
\[
\min \{ A_v([K,E]), B_v([K,E])\} =g([K,E]).
\]
Now we define the boundary map $\partial \colon \CFinfComb (G) \to \CFinfComb (G)$ by the formula:
\[
{\partial [K,E]} = \sum_{v\in E} U^{a_v[K,E]}\otimes [K,E-v] + \sum_{v\in E}
U^{b_v[K,E]}\otimes [K+2v^*,E-v],
\]
where
\begin{eqnarray*}     a_v[K,E]=A_v([K,E])-g([K,E]) 
&{\text{and}} & b_v[K,E]=B_v([K,E])-g([K,E]).
\end{eqnarray*}
(Extend this map $U$-equivariantly to the terms $U^j \otimes [K,E]$
and then linearly to $\CFinfComb (G)$.) Notice that $a_v[K,E],
b_v[K,E]$ are both nonnegative integers and $\min \{ a_v [K,E],
b_v[K,E] \} =0$ follows directly form the definitions.  It is obvious
that the boundary map decreases the $\delta$-grading by one.  Furthermore,
it is a simple exercise to show that
\begin{lem}
  The map $\partial $ is a boundary map, that is, $\partial ^2 =0$.
\end{lem}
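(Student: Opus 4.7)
The plan is to expand $\partial^2 [K,E]$ and show that, thanks to the $\Z/2\Z$ coefficients, every resulting generator appears with coefficient zero because it shows up twice with equal $U$-exponents, once from each ordering of the pair of vertices being removed.

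First I would set up the bookkeeping. Applying $\partial$ twice, we obtain a sum over ordered pairs of distinct vertices $v,w\in E$ of four kinds of contributions, indexed by whether each of the two applications of $\partial$ uses the ``$A$-type'' summand $U^{a}\otimes[K',E'-v]$ or the ``$B$-type'' summand $U^{b}\otimes[K'+2v^*,E'-v]$. This splits the generators appearing in $\partial^2[K,E]$ into four classes according to the final characteristic:
\begin{equation*}
[K,E-v-w],\quad [K+2v^*,E-v-w],\quad [K+2w^*,E-v-w],\quad [K+2v^*+2w^*,E-v-w].
\end{equation*}
The first and the last classes are symmetric in $v$ and $w$ at the level of the generator, so I must pair the $(v,w)$-term with the $(w,v)$-term. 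For the two middle classes, the generator $[K+2v^*,E-v-w]$ arises in two ways: either $v$ is removed first via the $B$-summand and then $w$ via the $A$-summand, or $w$ is removed first via the $A$-summand and then $v$ via the $B$-summand (and symmetrically for $[K+2w^*,E-v-w]$).

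Next I would carry out the exponent computation in each of these three situations using the formulas
\[
a_v[K,E]=g[K,E-v]-g[K,E],\qquad b_v[K,E]=\tfrac{K(v)+v^2}{2}+g[K+2v^*,E-v]-g[K,E],
\]
where the second identity is \eqref{eq:refor}. For the $[K,E-v-w]$ class, the two contributions telescope to $g[K,E-v-w]-g[K,E]$, independent of the order of $v$ and $w$. For the $[K+2v^*+2w^*,E-v-w]$ class, the two contributions each collapse to
\[
\tfrac{K(v)+v^2}{2}+\tfrac{K(w)+2v\cdot w+w^2}{2}+g[K+2v^*+2w^*,E-v-w]-g[K,E],
\]
which is manifestly symmetric in $v$ and $w$ (the cross term $v\cdot w$ is symmetric). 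For the mixed class $[K+2v^*,E-v-w]$, both contributions collapse to
\[
\tfrac{K(v)+v^2}{2}+g[K+2v^*,E-v-w]-g[K,E].
\]
In every case the two matching contributions have identical $U$-exponents, so working over $\Field=\Z/2\Z$ they cancel in pairs.

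I expect the only subtle point to be verifying that the exponents $a_v$ and $b_v$ are actually nonnegative, so that all powers of $U$ involved lie in $\Field[U,U^{-1}]$ and the manipulations make sense; this follows immediately from $\min\{A_v,B_v\}=g$. The main computational obstacle is simply keeping track of which $A$ or $B$ quantity is being evaluated on which shifted characteristic and subset, but once one uses \eqref{eq:refor} to convert every $B$ into a shift of $g$, the symmetry in $v$ and $w$ is transparent and the identity $\partial^2=0$ follows.
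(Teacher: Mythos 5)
Your proposal is correct and follows essentially the same route as the paper: pair the two orderings in which a pair of vertices is removed, and match the $U$-exponents by rewriting $a_v$ and $b_v$ via $g$ and Equation~\eqref{eq:refor}, so the paired terms cancel over $\Field=\Z/2\Z$. Your three-class bookkeeping is just a regrouping of the four exponent identities the paper checks, and your telescoped formulas verify them correctly.
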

\begin{proof}
The proof boils down to matching the exponents of the $U$-factors in 
front of various terms in $\partial ^2 [K,E]$ for a given generator
$[K,E]$. This idea leads us to four equations to check. One of them,
for example, relates the two $U$-powers in front of the two appearances 
$[K,E-v_1-v_2]$ in $\partial ^2[K,E]$. We claim that 
\begin{equation}\label{eq:olto}
a_{v_1}[K,E] +a_{v_2}[K,E-v_1]=a_{v_2}[K,E]+a_{v_1}[K,E-v_2]
\end{equation}
holds, therefore (over $\Field$) the two terms cancel each other.
Writing out the definitions of the terms in \eqref{eq:olto} we get
\[
g([K,E-v_1])-g([K,E])+g([K,E-v_1-v_2])-g([K,E-v_1])=
\]
\[
=g([K,E-v_2])-g([K,E])+g([K,E-v_1-v_2])-g([K,E-v_2]),
\]
which trivially holds. The remaing three cases to check are:
\begin{equation}
  \label{eq:olto2}
a_{v_1}[K,E]+b_{v_2}[K,E-v_1]=b_{v_2}[K,E]+a_{v_1}[K+2v_2^*, E-v_2],
\end{equation}
\[
b_{v_1}[K,E]+a_{v_2}[K+2v_1^*,E-v_1]=a_{v_2}[K,E]+b_{v_1}[K, E-v_2],
\]
and finally
\[
b_{v_1}[K,E]+b_{v_2}[K+2v_1^*,E-v_1]=b_{v_2}[K,E]+b_{v_1}[K+2v_2^*, E-v_2].
\]
Using the definition of $B_v$ given in \eqref{eq:refor}, the equations
reduce to similar equalities as in the first case.
\end{proof}

\begin{rem}
  In \cite{lattice} the theory is set up over $\Z$; for
  simplicity in the present paper we use the coefficients from the
  field $\Field = \Z /2\Z$ of two elements.
\end{rem}

\subsection{Connected sums}
Suppose that the plumbing forest $G$ is the union of $G_1$ and $G_2$,
with no edges connecting any vertex of $G_1$ to any vertex of
$G_2$. (In other words, $G_1$ and $G_2$ are both unions of components
of $G$.) It is a simple topological fact that in this case $Y_G$
decomposes as the connected sum of the two 3-manifolds $Y_{G_1}$ and
$Y_{G_2}$.  Correspondingly, the $\Field [\uuinv ]$-module
$\CFinfComb (G)$ decomposes as the tensor product 
\begin{equation}
\label{eq:ConnectedSum}
\CFinfComb (G)\cong \CFinfComb(G_1)\otimes _{\Field [\uuinv ]} 
\CFinfComb (G_2),
\end{equation} 
and the
definition of the boundary map $\partial$ shows that this
decomposition holds on the chain complex level as well.

\subsection{Spin$^c$ structures and the $J$-map}
Define an equivalence relation for the generators of the chain complex
$\CFinfComb (G)$ as follows: we say that $[K,E]$ and $[K',E']$ are
\emph{equivalent} if $K-K'\in 2H^2(X_G, Y_G; \Z )$. Obviously, the
boundary map respects this equivalence relation, hence the chain
complex splits according to this relation.

It is easy to see that (since $X_G$ is simply connected) a
characteristic cohomology class $K\in H^2 (X_G; \Z )$ uniquely
determines a spin$^c$ structure on $X_G$. By restricting this
structure to the boundary 3-manifold $Y_G$ we conclude that $K$
naturally induces a spin$^c$ structure $\s _K$ on $Y_G$. Two classes
$K,K'$ induce the same spin$^c$ structure on $Y_G$ if and only if  they
are equivalent in the above sense (that is, $K-K'\in 2H^2 (X_G, Y_G;
\Z )$).  Therefore the splitting of the chain complex $\CFinfComb (G)$
described above is  parametrized by the spin$^c$ structures of
$Y_G$:
\[
\CFinfComb (G)=\sum _{\s \in Spin^c(Y_G)}\CFinfComb (G, \s),
\]
where $\CFinfComb (G, \s )$ is spanned by those pairs $[K,E]$ for which
$\s _K=\s$. 

Consider the map
\[
J[K,E]=[-K-\sum _{v\in E}2v^*, E]
\]
and extend it $U$-equivariantly (and linearly) to $\CFinfComb (G)$.
Obviously $J$ provides an involution on $\CFinfComb (G)$, and a simple
calculation shows the following:
\begin{lem}\label{l:spinc}
The $J$-map is a chain map, that is,
$J\circ \partial  = \partial \circ J$.
\end{lem}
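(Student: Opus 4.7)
The plan is a direct verification: expand both $J\partial[K,E]$ and $\partial J[K,E]$, match the terms that share the same characteristic vector and subset, and check that the $U$-exponents agree. Set $K' := -K - \sum_{u \in E} 2u^*$, so that $J[K,E] = [K', E]$. Noting that $-(K + 2v^*) - \sum_{u \in E-v} 2u^* = K'$ and $K' + 2v^* = -K - \sum_{u \in E-v}2u^*$, one finds that $J\partial[K,E]$ contributes the terms $U^{a_v[K,E]}[-K - \sum_{u \in E-v}2u^*,\, E-v]$ and $U^{b_v[K,E]}[K',\, E-v]$, while $\partial J[K,E] = \partial[K',E]$ contributes $U^{a_v[K',E]}[K',\, E-v]$ and $U^{b_v[K',E]}[-K - \sum_{u \in E-v}2u^*,\, E-v]$. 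The lemma therefore reduces to the two exponent equalities
\[
a_v[K,E] = b_v[K', E], \qquad b_v[K,E] = a_v[K', E].
\]

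The key input is the duality \eqref{eq:useful}, which I would rephrase as $f([K, I]) = f([K, E]) + f([K', E - I])$ for every $I \subset E$. Minimizing over all $I \subset E$ yields $g([K,E]) - f([K,E]) = g([K', E])$. Minimizing instead over $I \subset E - v$, and using the bijection $I \mapsto E - I$ between subsets of $E - v$ and subsets of $E$ containing $v$, produces $A_v([K,E]) = f([K,E]) + B_v([K', E])$; subtracting $g([K,E])$ and applying the previous identity gives $a_v[K,E] = b_v[K', E]$. The symmetric calculation, minimizing over $I$ with $v \in I \subset E$, gives $B_v([K,E]) = f([K,E]) + A_v([K', E])$, and hence $b_v[K,E] = a_v[K', E]$.

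I do not anticipate a serious obstacle here: once \eqref{eq:useful} is recast as a duality swapping subsets of $E$ with their complements, the two required exponent identities fall out immediately. The only point requiring care is that the class $K'$ depends on the ambient set $E$, so the restated duality must be applied at the appropriate level when passing between $E$ and $E - v$. Since $J$ and $\partial$ are both $U$-equivariant and $\Field$-linear, the verification on the generators $[K,E]$ extends automatically to all of $\CFinfComb(G)$.
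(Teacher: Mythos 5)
Your proposal is correct and follows essentially the same route as the paper: expand both compositions, match terms, and reduce the claim to the two exponent identities $a_v[K,E]=b_v[-K-\sum_{u\in E}2u^*,E]$ and $b_v[K,E]=a_v[-K-\sum_{u\in E}2u^*,E]$, which are deduced from \eqref{eq:useful}. The only difference is that you spell out the derivation of these identities (via the complementation bijection $I\mapsto E-I$), a step the paper leaves as an easy consequence of \eqref{eq:useful}.
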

\begin{proof}
The two compositions can be easily determined as
\[
(J\circ \partial )[K,E]=\sum _{v\in E} \left( U^{a_v[K,E]}\otimes
  [-K-\sum _{u\in E-v}2u^*, E-v] \right) +
\]
\[
\sum _{v\in E} \left( U^{b_v[K,E]}\otimes [-K-\sum _{u\in E}2u^*, E-v]
\right)
\]
and
\[
(\partial \circ J )[K,E]=\sum _{v\in E} \left(U^{a_v[-K-\sum _{u\in E} 2u^*,
  E]}\otimes [-K-\sum _{u\in E}2u^* , E-v]\right) +
 \]
 \[
 + \sum _{v\in E} \left(U^{b_v[-K-\sum _{u\in E}2u^*, E]}\otimes
   [-K-\sum _{u\in E-v}2u^*, E-v]\right) .
\]
The fact that $J$ is a chain map, then follows from the
two identities
\begin{equation}\label{eq:spinek}
a_v[K,E] = b_v[-K-\sum _{u\in E} 2u^*, E] \qquad
{\mbox {and}} \qquad a_v[-K-\sum _{u\in E} 2u^*,E]=b_v[K,E].
\end{equation}
In turn, these identities easily follow from the identity of~ \eqref{eq:useful},
concluding the proof of the lemma.
\end{proof}

The $J$-map obviously respects the splitting of $\CFinfComb (G)$
according to spin$^c$ structures. In fact, the spin$^c$ structures
represented by $K$ and $-K$ are 'conjugate' to each other as spin$^c$
structures on $Y_G$ (cf. \cite{OSzF1}), inducing the spin$^c$
structures $\s, {\overline {\s}}\in$Spin$^c(Y_G)$, respectively.  The
$J$-map therefore is just the manifestation of the conjugation
involution of spin$^c$ structures on the chain complex level.  Indeed,
$J$ provides an isomorphism between the two subcomplexes $\CFinfComb
(G, \s )$ and $\CFinfComb (G, {\overline {\s}})$.

\subsection{Gradings}
The lattice chain complex $\CFinfComb (G)$ admits a \emph{Maslov
  grading}: for 
a generator $[K,E]$ and $j\in {\mathbb {Z}}$ define
$\grad (U^j \otimes [K,E])$ by the formula:
\[
\grad (U^j\otimes [K,E])=-2j +2g([K,E])+\vert E\vert +\frac{1}{4}(K^2
+\vert \Vertices (G)\vert ).
\]
Recall that $K^2$ is defined as the square of $nK$ divided by $n^2$,
where $nK\in H^2 (X_G, Y_G; \Z)$, hence it admits a cup square.  (Here
we use the fact that $G$ is negative definite, hence $\det M_G\neq 0$,
so the restriction of any cohomology class from $X_G$ to its boundary
$Y_G$ is torsion.) We expect $\grad (U^j\otimes [K,E])$ to be
a rational number rather than an integer.

\begin{lem}
The boundary map decreases the Maslov grading by one.
\end{lem}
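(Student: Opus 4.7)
The plan is to verify the claim by a direct computation, handling the two families of terms in $\partial[K,E]$ separately. Writing $\partial[K,E] = \sum_{v\in E} U^{a_v[K,E]}\otimes[K,E-v] + \sum_{v\in E} U^{b_v[K,E]}\otimes[K+2v^*,E-v]$, I will show each of these terms has Maslov grading exactly one less than $\grad([K,E])$.

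For a term of the first type, $U^{a_v[K,E]}\otimes[K,E-v]$, the $K^2$ and $|\Vertices(G)|$ contributions in the defining formula for $\grad$ are unchanged, so comparing gradings reduces to checking
\[
-2a_v[K,E] + 2g([K,E-v]) + (|E|-1) = 2g([K,E]) + |E| - 1,
\]
which is just $a_v[K,E] = g([K,E-v]) - g([K,E]) = A_v([K,E]) - g([K,E])$, exactly the definition of $a_v[K,E]$.

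For a term of the second type, $U^{b_v[K,E]}\otimes[K+2v^*,E-v]$, the class $K$ gets replaced by $K+2v^*$ so one must account for the change in $K^2$. A straightforward expansion gives
\[
\tfrac{1}{4}\bigl((K+2v^*)^2 - K^2\bigr) = K(v) + v^2,
\]
using that $v^*\cdot v^* = v\cdot v = v^2$ and $K\cdot v^* = K(v)$. Equating the Maslov grading of $U^{b_v[K,E]}\otimes[K+2v^*,E-v]$ with $\grad([K,E]) - 1$ then reduces to the identity
\[
b_v[K,E] = \tfrac{K(v)+v^2}{2} + g([K+2v^*,E-v]) - g([K,E]),
\]
and this is precisely $B_v([K,E]) - g([K,E])$ once one invokes formula \eqref{eq:refor} for $B_v([K,E])$.

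The computation extends $U$-equivariantly (so $\grad(U^j\otimes[K,E])$ drops by one whenever we multiply by $U$, consistent with $-2j$ in the defining formula) and linearly, completing the proof. There is no real obstacle here; the only point requiring care is the bookkeeping for the change $K^2 \rightsquigarrow (K+2v^*)^2$, which produces exactly the term $K(v)+v^2$ needed to offset the $\frac{K(v)+v^2}{2}$ appearing in $B_v([K,E])$ via the reformulation \eqref{eq:refor}.
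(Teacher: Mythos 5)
Your proof is correct and follows essentially the same route as the paper: treat the two types of boundary terms separately, use the definition of $a_v[K,E]$ for the first, and use the reformulation \eqref{eq:refor} of $B_v([K,E])$ (together with the computation $\tfrac{1}{4}\bigl((K+2v^*)^2-K^2\bigr)=K(v)+v^2$) for the second. The only difference is that you write out explicitly the $K^2$ bookkeeping that the paper leaves implicit.
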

\begin{proof}
  We proceed separately for the two types of components of the
  boundary map.  After obvious simplifications we get that
\[
\grad (U^j\otimes [K,E])- \grad (U^j \cdot U^{a_v[K,e]}\otimes
[K,E-v])=
\]
\[
2g([K,E]) +\vert E\vert + 2a_v[K,E]-2g([K,E-v])-\vert
E-v\vert , 
\]
which, according to the definition of $a_v[K,E]$, is equal to 1.
Similarly, 
\[
\grad (U^j\otimes  [K,E])- \grad (U^j \cdot U^{b_v[K,e]}\otimes
[K+2v^*,E-v])=1
\]
follows from the same simplifications and Equation~\eqref{eq:refor}.
\end{proof}
It is not hard to see that the $J$-map preserves the Maslov grading. Indeed,
\[
\grad ([K,E])-\grad (J[K,E])=\grad ([K,E])-\grad ([-K-\sum _{v\in
  E}2v^*, E])=
\]
\[
=2g([K,E])-2g([-K-\sum _{v\in E}2v^*, E])+\frac{1}{4}(K^2-(-K-\sum
_{v\in E}2v^*)^2).
\]
Using the idenity of \eqref{eq:useful} and the alternative definition of $f(K,E)$, it follows that the
above difference is equal to zero.

Recall that the cardinality $\vert E \vert $ for a generator $[K,E]$ of $\CFmComb
(G)$ gives the $\delta$-grading, which decomposes each
$\CFmComb (G, \s )$ as
\[
\CFmComb (G, \s )=\oplus _{k=0}^{n} \CFmComb _k (G, \s ),
\]
where $n=\vert \Vertices (G)\vert$. It is easy to see that the differential 
$\partial $ decreases $\delta$-grading by one.

\subsection{Definition of the lattice homology}
We define the lattice homology groups as
follows.  Consider $({{\CFinfComb}}(G) ,\partial )$, and let
$({{\CFmComb}} (G), \partial ) $ denote the subcomplex generated by
those generators $U^j\otimes [K,E]$ for which $j\geq 0$ (and equipped with
the differential restricted to the subspace). Setting $U=0$ in this
subcomplex we get the complex $({{\CFaComb}} (G) , {\widehat {\partial }})$.
Obviously all these chain complexes split according to spin$^c$ structures
and admit a Maslov grading, $\delta$-grading and a $J$-map.
\begin{defn}
  Let us define the \emph{lattice homology} $\HFinfComb (G)$ as the homology
  of the chain complex $({{\CFinfComb}} (G), \partial )$. The homology of
  the subcomplex ${{\CFmComb}} (G)$ (with the boundary map $\partial$
  restricted to it) will be denoted by $\HFmComb (G)$, while the
  homology of $({{\CFaComb}} (G), {\widehat {\partial }})$ is $\HFaComb (G)$.
\end{defn}

Since the chain complex $\CFmComb (G)$ (and similarly, $\CFinfComb (G)$
and $\CFaComb (G)$) splits according to spin$^c$ structures, so does
the homology, giving the decomposition
\[
\HFmComb (G)=\oplus _{\s \in {\rm {Spin}}^c(Y_G)}\HFmComb (G, \s ).
\]
The $\delta$-grading then decomposes $\HFmComb (G, \s )$ further as 
\[
\HFmComb (G, \s )=\oplus _{k=0}^{n} \HFmComb _k (G, \s ),
\]
where $n=\vert \Vertices (G)\vert$. The Maslov grading provides an
additional ${\mathbb {Q}}$-grading on $\HFmComb (G, \s )$, but we reserve
the subscript
$\HFmComb_k(G,\s)$ for the $\delta$-grading.

\begin{rem}
  The embedding $i \colon \CFmComb (G)\to \CFinfComb (G)$ can be used
  to define a quotient complex $\CFpComb (G)$ (with the differential
  inherited from this construction) which fits into the short exact
  sequence
\[
0\to \CFmComb (G)\to \CFinfComb(G)\to \CFpComb (G) \to 0 .
\]
The homology of this quotient complex will be denoted by $\HFpComb
(G)$. The same splittings as before (according to spin$^c$ structures,
the $\delta$-grading and Maslov grading) apply to this theory is well.
The short exact sequence above then induces a long exact sequence on
the various homologies.

In a similar manner, $\CFmComb (G)$ and $\CFaComb (G)$ can be also 
connected by a short exact sequence:
\[
0\to \CFmComb (G)\stackrel{U}{\to} \CFmComb(G)\to \CFaComb (G) \to 0 ,
\] 
where the first map is multiplication by $U$. This short exact
sequence then induces a long exact sequence on homologies connecting
$\HFmComb (G)$ and $\HFaComb (G)$:
\[
\ldots \to \HFmComb _q (G)\stackrel{U}{\to }\HFmComb _{q} (G)\to
\HFaComb _{q} (G) \to \HFmComb _{q-1} \to \ldots 
\]
\end{rem}

\subsection{The structure of $\HFmComb (G)$}
The homology group $\HFmComb (G)$ is obviously an $\Field [U]$-module.
In the next result we describe an algebraic property these
particular modules satisfy.
\begin{thm} (N\'emethi, \cite{lattice})
\label{thm:structure}
  Suppose that $G$ is a negative definite plumbing tree and $\s$
  is a spin$^c$ structure on $Y_G$. Then the homology $\HFmComb
  (G, \s)$ is a finitely generated $\Field [U]$-module of the form
\[
\HFmComb (G, \s ) = \Field [U] \oplus \bigoplus _i A_i,
\]
where the modules $A_i$ are cyclic modules of the form $\Field [U]/(U^n)$.
Furthermore the $\Field [U]$-factor is in $\HFmComb _0(G,\s )$.
\end{thm}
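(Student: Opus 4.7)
The plan is to deduce the theorem from three separate ingredients: (i) finite generation of $\HFmComb(G,\mathfrak{s})$ over $\Field[U]$, (ii) the classification of finitely generated modules over the PID $\Field[U]$, and (iii) a computation of the localization $U^{-1}\HFmComb(G,\mathfrak{s}) \cong \HFinfComb(G,\mathfrak{s})$ that pins down the rank and $\delta$-grading of the free summand.

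First I would defer the finite-generation statement to Section~\ref{app:contract}: the idea there is to produce an explicit $\Field[U]$-equivariant deformation retraction of $\CFmComb(G,\mathfrak{s})$ onto a finite subcomplex, exploiting negative-definiteness to bound the characteristic classes $K$ which cannot be cancelled in pairs. Once finite generation is available, the $\Field[U]$-module structure theorem automatically writes
\[
\HFmComb(G,\mathfrak{s}) \;\cong\; \Field[U]^{r} \;\oplus\; \bigoplus_i \Field[U]/(U^{n_i}),
\]
and only $r=1$ and the position of the free summand remain to be proved.

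Next I would compute $\HFinfComb(G,\mathfrak{s})$ directly and show that it is isomorphic to a single copy of $\Field[U,U^{-1}]$ concentrated in $\delta$-grading zero. The key point is that every differential term $U^{a_v[K,E]}\otimes [K,E-v]$ or $U^{b_v[K,E]}\otimes [K+2v^{*},E-v]$ has $\min(a_v,b_v)=0$, so after inverting $U$ at least one of the two summands at each $v$ becomes an invertible (degree-zero in $U$) edge in the complex. An inductive cancellation on $|E|$, starting from the top $\delta$-grading, contracts away every generator with $E\neq\emptyset$ and simultaneously identifies the surviving generators $[K,\emptyset]$ and $[K',\emptyset]$ whenever $K' = K + 2v^{*}$ for some $v$; since any two characteristic classes inducing the same $\mathfrak{s}\in\mathrm{Spin}^c(Y_G)$ differ by an element of $2H^2(X_G,Y_G;\Z)$, the resulting complex has a single $\Field[U,U^{-1}]$-generator in $\delta=0$.

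Finally, the inclusion-induced map $\HFmComb(G,\mathfrak{s}) \to \HFinfComb(G,\mathfrak{s})$ kills precisely the $U$-torsion part of $\HFmComb(G,\mathfrak{s})$ and sends the free summand isomorphically onto $\HFinfComb(G,\mathfrak{s})$, preserving the $\delta$-grading. Comparing with the computation above forces $r=1$ and places the $\Field[U]$-summand in $\HFmComb_0(G,\mathfrak{s})$. The main obstacle in this plan is the finite-generation step: the chain complex has infinitely many $\Field[U]$-generators, one for each characteristic class in the spin$^c$ class times each subset of $\Vertices(G)$, and the cancellation argument that cuts it down to a finite model must use the negative-definiteness of the intersection form in an essential way to control the growth of the weights $f([K,I])$ as $|K|\to\infty$; the $\HFinfComb$-computation and the free-rank identification, by contrast, are formal consequences of $\min(a_v,b_v)=0$ and the structure theorem.
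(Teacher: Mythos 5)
Your overall architecture is sound and genuinely different from the paper's: the paper pins down the free summand by setting $U=1$ (Lemma~\ref{lem:UequalsOne} identifies $\CFoverComb(G,\s)$ with the cellular complex of the hypercube decomposition of $\R^n$, so $\HFoverComb(G,\s)\cong\Field$ in $\delta$-grading $0$) and then applies the Universal Coefficient Theorem, whereas you propose to localize and compute $\HFinfComb(G,\s)=U^{-1}\HFmComb(G,\s)$ directly; the finite-generation step is deferred to Section~\ref{app:contract} in both. The gap is exactly in your key step, the direct computation $\HFinfComb(G,\s)\cong\Field[\uuinv]$ concentrated in $\delta$-grading $0$. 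Over $\Field[\uuinv]$ every power of $U$ is a unit, so $\min(a_v,b_v)=0$ is irrelevant there; and the proposed cancellation is an unrestricted Gaussian elimination on a complex with infinitely many generators in each spin$^c$ class, which is unsound without a finiteness control. Already for the one-vertex graph of Example~\ref{ex:minusz1} the localized complex is a bi-infinite line (vertices $[2n+1,\emptyset]$, edges $[2n+1,\{v\}]$, each boundary coefficient a unit), and matching every edge with one of its endpoints leaves no critical generators at all even though $H_0\cong\Field[\uuinv]$; so ``contract away every generator with $E\neq\emptyset$'' proves too much. What your sketch actually yields is only that the $\delta=0$ homology is a cyclic $\Field[\uuinv]$-module, generated by the class of any $[K,\emptyset]$, since each relation $\partial[K,\{v\}]$ identifies $[K,\emptyset]$ with a unit times $[K+2v^*,\emptyset]$. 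It does not show that this class is nonzero and non-torsion (a priori the unit factors could have nontrivial monodromy around loops in the lattice, giving a quotient of $\Field[\uuinv]$), nor that the homology vanishes in $\delta>0$ --- and these are precisely the facts needed to force $r=1$ and to place the free summand in $\HFmComb_0$. To close the gap you need an extra input: either the paper's $U=1$/UCT argument, or an explicit untwisting change of basis rescaling $[K,E]$ by $U^{g([K,E])+\frac18 K^2}$ (up to an overall constant in each spin$^c$ class; consistency of these exponents is exactly the existence of the Maslov grading, cf. Remark~\ref{rem:altfor}), which identifies $\CFinfComb(G,\s)$ with the cube complex of $\R^n$ over $\Field[\uuinv]$ and hence computes its homology.

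A smaller point: the structure theorem over the PID $\Field[U]$ does not ``automatically'' give torsion summands of the form $\Field[U]/(U^{n})$; a finitely generated module could a priori contain, say, $\Field[U]/(U+1)$. The paper rules this out using homogeneity with respect to the Maslov grading (the annihilator ideals are homogeneous, hence powers of $U$). Within your plan the same conclusion would follow from your $\HFinfComb$ computation --- a $p$-primary summand with $p\neq U$ survives localization as nonzero torsion, contradicting freeness of $\Field[\uuinv]$ --- but you should say so explicitly, since your final step (``the map to $\HFinfComb$ kills precisely the $U$-torsion'') tacitly assumes it. The remaining ingredients of your plan (exactness of localization, compatibility with the $\delta$-grading) are fine.
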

The proof of the fact that $\HFmComb (G, \s)$ is finitely generated
(as an $\Field [U]$-module) is deferred until
Section~\ref{app:contract}.  Here we show how the previous discussion
and this finite generation implies the  rest of the
structure theorem.

Since any finitely generated $\Field [U]$-module is the direct sum of
cyclic $\Field [U]$-modules, in verifying Theorem~\ref{thm:structure}
we need to show that
\begin{itemize}
\item the $U$-torsion parts of $\HFmComb (G, \s )$ are all of the form
$\Field [U]/(U^n)$ and 
\item there is a single non-torsion module $\Field [U]$ in $\HFmComb
  (G, \s )$, and it lives in $\delta$-degree 0.
\end{itemize}
The first claim follows easily from the existence of a Maslov grading
and the fact that multiplication by $U$ drops this grading by $-2$:
these facts imply that the ideal $I$ in $A_i=\Field [U]/I$ should be
generated by a homogeneous polynomial, implying that $I=(U^n)$ for
some $n$.

For the second claim we define a further chain complex $(
\CFoverComb (G), {\overline {\partial }})$ associated to $G$ by
setting $U=1$ in the chain complex ${{\CFmComb }}(G)$ (or in
$\CFinfComb (G, \s)$). Then $\CFoverComb (G)$ is generated by the
pairs $[K,E]\in \Char (G)\times {\mathbb {P}}(\Vertices (G))$ over
$\Field$ (just like $\CFaComb (G)$ is), but the boundary map
${\overline {\partial}}$ is radically different from ${\widehat
  {\partial }}$. While in ${\widehat {\partial }}$ we allow nontrivial
boundary if and only if $a_v$ (or $b_v$) is equal to 0, in ${\overline
  {\partial}}$ the information captured by $a_v$ and $b_v$ is
completely lost. Therefore it is not surprising that

\begin{lem}
  \label{lem:UequalsOne}
  For a fixed spin$^c$ structure $\s$ the homology $\HFoverComb (G,
  \s)$ of $\CFoverComb (G, \s )$ is isomorphic to $\Field$, and
  $\HFoverComb (G, \s )=\HFoverComb _0(G, \s )$.
\end{lem}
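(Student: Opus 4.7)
My plan is to identify $\CFoverComb(G, \s)$ with the standard cubical chain complex of $\R^n$ over $\Field$, where $n = |\Vertices(G)|$, and then invoke the contractibility of $\R^n$.

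For the identification, I would fix one base characteristic class $K_0$ with $\s_{K_0} = \s$. Every other $K$ in $\s$ is uniquely of the form $K_0 + 2\sum_v \lambda_v v^*$ with $\lambda = (\lambda_v) \in \Z^n$, so the generators $[K,E]$ of $\CFoverComb(G,\s)$ are parametrized by pairs $(\lambda, E) \in \Z^n \times \mathcal{P}(\Vertices(G))$. Setting $U=1$ in the definition of $\partial$ turns the differential into
\[
\overline{\partial}[\lambda, E] \;=\; \sum_{v \in E}\bigl([\lambda, E-v] + [\lambda + e_v, E-v]\bigr),
\]
where $e_v \in \Z^n$ is the standard basis vector at $v$. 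This is precisely the cubical boundary map on the unit cubulation of $\R^n$: the generator $[\lambda, E]$ of $\delta$-grading $|E|$ plays the role of the $|E|$-dimensional cube with base vertex $\lambda$ and direction set $E$.

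To extract the homology concretely, I would exhibit the tensor product decomposition $\CFoverComb(G, \s) \cong \bigotimes_{v \in \Vertices(G)} C_v$, where $C_v$ is the small complex with generators $[\mu]$ in degree $0$ and $[\mu, \{v\}]$ in degree $1$ for $\mu \in \Z$, and $\partial[\mu, \{v\}] = [\mu] + [\mu+1]$. A short direct check (using that chains are finitely supported) shows $H_*(C_v) \cong \Field$, concentrated in degree $0$: the augmentation $[\mu] \mapsto 1$ descends to an isomorphism on $H_0$, and any finite $1$-cycle $\sum c_\mu [\mu,\{v\}]$ forces $c_\mu = c_{\mu-1}$ for all $\mu$, hence $c \equiv 0$ by finiteness of support. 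The K\"unneth formula over the field $\Field$ then yields
\[
\HFoverComb(G, \s) \;\cong\; \bigotimes_{v \in \Vertices(G)} H_*(C_v) \;\cong\; \Field,
\]
concentrated in $\delta$-grading $0$, as claimed.

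The main substantive step is the reparametrization of $\s$-characteristic classes by $\Z^n$ and the resulting recognition of the tensor product structure; beyond that, the homology calculation is the standard observation that the cubulation of $\R$ computes the homology of a point. No subtle analytic or combinatorial obstruction appears, since the complex is already completely described by the combinatorics of the boundary $\overline{\partial}$.
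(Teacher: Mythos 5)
Your proof is correct, and its first half is the same identification the paper makes: parametrizing the characteristic classes in a fixed spin$^c$ class by $\Z^n$ (uniqueness of the representation $K=K_0+2\sum_v\lambda_v v^*$ uses nondegeneracy of the intersection form, which is fine here since $G$ is negative definite) exhibits $\CFoverComb(G,\s)$ as the cubical chain complex of the unit cubulation of $\R^n$. Where you diverge is in how you finish: the paper simply observes that this complex is the $CW$-chain complex of $\R^n$ and invokes contractibility, so that $\HFoverComb(G,\s)$ is the homology of a point, namely $\Field$ in degree $0$; you instead avoid any appeal to the topology of $\R^n$ and compute algebraically, decomposing the complex as $\bigotimes_{v}C_v$ with each $C_v$ the two-row complex $\partial[\mu,\{v\}]=[\mu]+[\mu+1]$, checking $H_*(C_v)\cong\Field$ in degree $0$ by the finite-support argument, and applying the K\"unneth formula over the field $\Field=\Z/2\Z$ (over which the absence of signs also makes the tensor decomposition immediate). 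The paper's route is shorter and more conceptual; yours is self-contained, makes explicit the finite-support point that underlies the statement that the cubulation of $\R$ computes the homology of a point, and makes the concentration in $\delta$-grading $0$ manifest from the grading of the tensor factors.
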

\begin{proof}
By considering the set of pairs $[K,E]$ with 
spin$^c$ structure $\s_K=\s$, the corresponding hypercubes
\[
\{ K+\sum _{v_i\in E} 2t_iv_i^* \vert t_i\in [0,1]\}
\]
(when viewed $H^*(X_G; \Z )$ as subset of $H^* (X_G; \R )\cong \R ^n$)
provide a $CW$-decomposition of ${\mathbb {R}}^n$.  It follows from
the definition that $H_* (\CFoverComb (G, \s ), {\overline {\partial
  }})$ simply computes the $CW$-homology of ${\mathbb {R}}^n$, which
is equal (with $\Field$-coefficients) to $\Field$ in degree 0.
(Despite its simplicity, the $U=1$ theory turns out to be useful in
particular explicit computations.)
\end{proof}

\begin{proof}[Proof of Theorem~\ref{thm:structure}, assuming the finiteness
claim] Suppose that $\HFmComb (G, \s )$ is a finitely generated
  $\Field [U]$-module. We will appeal to the Universal Coefficient
  Theorem: notice that $\Field$ is an $\Field [U]$-module by defining
  the action of the polynomial $p(U)=\sum p_i U^i$ as multiplication
  by $\sum p_i$. Then
\[
0\to \HFmComb _q(G, \s ) \otimes _{\Field [U]}\Field \to \HFoverComb
_q(G, \s ) \to Tor (\HFmComb _{q-1}(G, \s ), \Field )\to 0
\]
proves the claim by the previos computation of $\HFoverComb (G, \s)$
and the facts that $Tor (\Field [U], \Field )=Tor (\Field [U]/U^n,
\Field )= 0$ and that $\Field [U]/(U^n)\otimes _{\Field [U]}\Field
=0$, while $\Field [U]\otimes _{\Field [U]}\Field =\Field$.
\end{proof}

\begin{cor}
  The $\Field [\uuinv ]$-module $\HFinfComb (G,\s)=\HFinfComb _0(G,\s
  )$ is isomorphic to $\Field [\uuinv ]$.
\end{cor}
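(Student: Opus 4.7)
The plan is to obtain $\HFinfComb(G,\s)$ from $\HFmComb(G,\s)$ by inverting $U$ and then apply the structure theorem (Theorem~\ref{thm:structure}). First I would observe that, at the chain level,
\[
\CFinfComb(G,\s)\;\cong\;\CFmComb(G,\s)\otimes_{\Field[U]}\Field[\uuinv],
\]
since $\CFmComb(G,\s)$ is the subcomplex of $\CFinfComb(G,\s)$ of nonnegative $U$-powers and the whole complex is obtained by allowing negative powers of $U$, i.e.\ by localizing at $U$. Because localization is exact (equivalently, $\Field[\uuinv]$ is flat over $\Field[U]$), this identification passes to homology:
\[
\HFinfComb(G,\s)\;\cong\;\HFmComb(G,\s)\otimes_{\Field[U]}\Field[\uuinv].
\]

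Next I would invoke Theorem~\ref{thm:structure} to write
\[
\HFmComb(G,\s)\;\cong\;\Field[U]\;\oplus\;\bigoplus_i \Field[U]/(U^{n_i}),
\]
with the free $\Field[U]$-summand sitting in $\delta$-grading $0$. Tensoring with $\Field[\uuinv]$ over $\Field[U]$ kills every torsion summand, because in $\Field[\uuinv]$ the element $U^{n_i}$ is a unit and hence $\Field[U]/(U^{n_i})\otimes_{\Field[U]}\Field[\uuinv]=0$; on the other hand $\Field[U]\otimes_{\Field[U]}\Field[\uuinv]=\Field[\uuinv]$. Combining these two computations gives $\HFinfComb(G,\s)\cong\Field[\uuinv]$ as an $\Field[\uuinv]$-module.

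Finally, to see that this copy of $\Field[\uuinv]$ lies in $\delta$-grading $0$, I would use that the differential preserves $\delta$-grading in the sense that the decomposition $\CFmComb(G,\s)=\bigoplus_k\CFmComb_k(G,\s)$ behaves compatibly with localization, so the $\delta$-grading on $\HFmComb(G,\s)$ passes to the $\delta$-grading on $\HFinfComb(G,\s)$. Since the surviving $\Field[U]$-summand sits in $\delta$-grading $0$ by the structure theorem, so does its localization, yielding $\HFinfComb(G,\s)=\HFinfComb_0(G,\s)\cong\Field[\uuinv]$.

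There is no real obstacle here beyond citing the (still-to-be-proved) finite generation in Theorem~\ref{thm:structure}; the only point to be careful about is that the tensor product and homology operations commute, which is ensured by the flatness of $\Field[\uuinv]$ over $\Field[U]$.
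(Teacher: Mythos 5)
Your proposal is correct and follows essentially the same route as the paper: both pass from $\HFmComb(G,\s)$ to $\HFinfComb(G,\s)$ by inverting $U$ and then apply the structure theorem (plus Lemma~\ref{lem:UequalsOne} for the $\delta$-grading location). The only cosmetic difference is that you invoke flatness of $\Field[\uuinv]$ over $\Field[U]$ directly, whereas the paper runs the Universal Coefficient Theorem and checks that the $\Tor$ terms vanish summand by summand, which amounts to the same computation.
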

\begin{proof}
By the Universal Coefficient Theorem we get that there is a short  
exact sequence
\[
0\to \HFmComb _q (G,\s )\otimes _{\Field [U]}\Field[\uuinv ]\to
\HFinfComb _q(G, \s) \to Tor (\HFmComb _{q-1}(G, \s ) , \Field [\uuinv ])\to 0.
\]
Since $Tor( \Field [U] , \Field [\uuinv ])=Tor (\Field [U]/(U^n),
\Field [\uuinv ])=0$ and $(\Field [U]/(U^n)) \otimes _{\Field
  [U]}\Field [\uuinv ]=0$, while $\Field [U] \otimes _{\Field
  [U]}\Field [\uuinv ]=\Field [\uuinv ]$, the claim obviously
follows. By Lemma~\ref{lem:UequalsOne} we get that the (single)
$\Field [U]$-factor is in $\HFmComb _0(G, \s)$, we get that
$\HFinfComb (G, \s ) = \HFinfComb _0(G, \s )$.
\end{proof}
\begin{defn}
  Let $\HFmComb _{red}(G, \s)\subseteq \HFmComb (G,\s) $ denote the kernel
  of the map $i _*$ induced by the embedding $i \colon \CFmComb (G,
  \s)\to \CFinfComb (G, \s)$. This group is finite dimensional as a
  vector space over $\Field$ and is called the \emph{reduced lattice
    homology} of $(G, \s )$.
\end{defn}

\subsection{Examples}
We conclude this section by working out a simple example which will be
useful in our later discussions.
\begin{exa}\label{ex:minusz1}
Suppose that the tree $G$ has a single vertex $v$ with framing $-1$.
The chain complex $\CFinfComb (G)$
is generated over $\Field [\uuinv ]$ by the elements
\[
\{ [2n+1, \{ v\}], [2n+1, \emptyset ]\mid n\in \Z \},
\]
where a characteristic vector on $G$ is denoted by its value $2n+1$ on
$v$. The boundary map on $[2n+1, \emptyset ]=[2n+1]$ 
is given by $\partial [2n+1]=0$ and by
  \[ \partial [ 2n+1, \{ v\}]
  = \left\{\begin{array}{ll}
      [2n+1]  + U^n \otimes [2n-1] & {\text{if $n\geq 0$}} \\
      U^{-n}\otimes [2n+1]+[2n-1] & {\text{if $n< 0$}}. 
      \end{array}
    \right.\] These formulae also describe the chain complexes
    $\CFmComb (G)$ and $\CFaComb (G)$ (generated over $\Field [U]$ and
    over $\Field$). Let us consider the map $F$ from $\CFinfComb (G)$
    to the subcomplex $\Field [\uuinv ]\langle [-1]\rangle \subseteq
    \CFinfComb (G)$ generated by the element $[-1]$, defined as
 \[  F([ 2n+1, E])
  = \left\{\begin{array}{ll}
      0 & {\text{if $E=\{ v\}$}} \\
      U^{\frac{1}{2}n(n+1)}\otimes [-1] & {\text{if $E=\emptyset $}}
      \end{array}
    \right.\]
This map provides a chain homotopy equivalence between 
$\CFinfComb (G)$ and $\Field [\uuinv ]$ (the latter equipped with the
differential $\partial =0$), as shown by the chain homotopy
\[ 
H([ 2n+1, E]) = \left\{\begin{array}{ll}
    0 & {\text{if $E=\{ v\}$ or $n=-1$}} \\
    \sum _{i=0}^n U^{s_i}\otimes [2(n-i)+1, v] & {\text{if
        $E=\emptyset$ and $n\geq 0$}}\\
    \sum _{i=0}^{-n-2}U^{r_i}\otimes [2(n+i+1)+1, v] & {\text{if
        $E=\emptyset $ and $n< -1$}}\\
      \end{array}
    \right.\] where $s_0=0$ and $s_{i}=s_{i-1}+b_{v}[2(n-i-1)-1,
    v]=\frac{1}{2}i(2n+1-i)$, $r_0=0$ and $r_{i}=r_{i-1}+a_v[2(n+i)+1,
    v]=-\frac{1}{2}i(2n+1+i)$.  In conclusion, the homology
    $\HFinfComb (G)$ (and similarly $\HFmComb (G)$ and $\HFaComb (G)$)
    is generated by the class of $[-1]$ over $\Field [\uuinv ]$ (and
    over $\Field [U]$ and $\Field$, respectively). In particular,
    $\HFmComb _i (G)=0$ for $i>0$.
\end{exa}
\begin{rem}\label{rem:gk}
  A similar computation shows that the lattice homology $\HFmComb
  (G_k)$ of the graph $G_k$ we get by considering a linear chain of
  $k$ vertices of framing $(-2)$ and a final one with framing $(-1)$
  (cf. Figure~\ref{fig:lanc}) is also isomorphic to $\Field [U]$ (and
  to $\Field $ in the $\CFaComb$-theory). The above example discusses
  the case $k=0$ of this family. We will provide details of the
  computation for further $k$'s in Section~\ref{sec:app}.
\begin{figure}[ht]
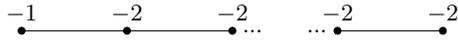

\begin{center}
\setlength{\unitlength}{1mm} \unitlength=0.7cm
\begin{graph}(2,0.5)(-2,0)
\graphnodesize{0.15}

\roundnode{n1}(-3,0)
\roundnode{n2}(-1,0)
\roundnode{n3}(1,0)
\roundnode{n4}(3,0)
\roundnode{n5}(5,0)

\edge{n1}{n2}
\edge{n2}{n3}
\edge{n4}{n5}

\autonodetext{n1}[n]{\small $-1$} 
\autonodetext{n2}[n]{\small $-2$} 
\autonodetext{n3}[n]{\small $-2$}
\autonodetext{n4}[n]{\small $-2$}
\autonodetext{n5}[n]{\small $-2$}
\autonodetext{n3}[e]{\small $...$}
\autonodetext{n4}[w]{\small $...$}
\end{graph}
\end{center}
\caption{{\bf The plumbing tree $G_k$.} The graph has $k+1$ vertices,
  the left-most admitting framing $(-1)$ while all the others have
  framing $(-2)$. It is easy to see that the corresponding 3-manifold
  is $S^3$.}
\label{fig:lanc}
\end{figure}
\end{rem}

Recall that for the disjoint union $G=G_1\cup G_2$ of two trees/forests
the chain complex of $G$ (and therefore the lattice homology of $G$)
splits as the tensor product of the lattice homologies of $G_1$ and $G_2$
(over the coefficient ring of the chosen theory). As a quick corollary we
get 
\begin{cor}\label{c:kulonegy}
Suppose that $G=G_1\cup G_2$ where $G_2$ is the graph encountered in
Example~\ref{ex:minusz1}. Then $\HFmComb (G)\cong \HFmComb (G_1)$.
(Similar statements hold for the other versions of the theory.)
\end{cor}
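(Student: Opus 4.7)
The plan is to combine the connected sum decomposition of Equation~\eqref{eq:ConnectedSum} with the explicit chain homotopy equivalence constructed in Example~\ref{ex:minusz1}. Since $G = G_1 \cup G_2$ is a disjoint union of plumbing forests, the decomposition gives
\[
\CFmComb(G) \cong \CFmComb(G_1) \otimes_{\Field[U]} \CFmComb(G_2)
\]
as chain complexes of $\Field[U]$-modules, and similarly for the $\CFinfComb$ and $\CFaComb$ versions (over $\Field[\uuinv]$ and $\Field$ respectively).

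Next, I would invoke Example~\ref{ex:minusz1}: the maps $F$ and $H$ constructed there provide an explicit chain homotopy equivalence between $\CFinfComb(G_2)$ and $\Field[\uuinv]$ equipped with the zero differential, and the same formulae (after restricting to nonnegative powers of $U$, or setting $U = 0$) yield chain homotopy equivalences $\CFmComb(G_2) \simeq \Field[U]$ and $\CFaComb(G_2) \simeq \Field$. Tensoring these equivalences with the identity on $\CFmComb(G_1)$ (a standard operation, which preserves chain homotopy equivalence because the tensor product is taken over the ground ring) produces a chain homotopy equivalence
\[
\CFmComb(G) \simeq \CFmComb(G_1) \otimes_{\Field[U]} \Field[U] \cong \CFmComb(G_1),
\]
and analogously for the other two theories. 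Passing to homology yields the desired isomorphism $\HFmComb(G) \cong \HFmComb(G_1)$.

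There is no real obstacle here beyond bookkeeping: the hardest thing to check is that the chain homotopy equivalence respects the splitting by spin$^c$ structures and the Maslov and $\delta$-gradings, but this follows directly from the fact that the maps $F$ and $H$ in Example~\ref{ex:minusz1} are defined on generators in a manner compatible with all of this structure, and the spin$^c$ structure on $Y_G = Y_{G_1} \# Y_{G_2}$ splits as a sum of spin$^c$ structures on the summands (with $Y_{G_2} = S^3$ contributing only the unique spin$^c$ structure).
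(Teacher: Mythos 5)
Your proposal is correct and follows essentially the same route as the paper: the connected sum decomposition~\eqref{eq:ConnectedSum} combined with the computation $\CFmComb(G_2)\simeq\Field[U]$ from Example~\ref{ex:minusz1}. The only (harmless) difference is that you carry out the argument at the chain level by tensoring the homotopy equivalence with the identity, whereas the paper passes directly to the tensor decomposition of homology, which amounts to the same thing since $\HFmComb(G_2)\cong\Field[U]$ is free and no Tor terms appear.
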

\begin{proof}
  By the connected sum formula (Equation~\eqref{eq:ConnectedSum}), and
  by the computation in Example~\ref{ex:minusz1} we get that
\[
\HFmComb (G) \cong \HFmComb (G_1)\otimes _{\Field [U]}\HFmComb (G_2)\cong
\HFmComb (G_1)\otimes _{\Field [U]}{\Field [U]}\cong \HFmComb (G_1).
\]
verifying the statement.
\end{proof} 

\section{The knot filtration on lattice homology}
\label{sec:knots}
Denote the vertices of the tree $\Gamma _{v_0}$ by $V= \Vertices
(\Gamma _{v_0})=\{ v_0, v_1, \ldots , v_n\}$.  Assume that each $v_j$
with $j>0$ is equipped with a framing $m_j\in \Z$, but leave the
vertex $v_0$ unframed.  In the following we will assume that $G=\Gamma
_{v_0}-v_0$ is negative definite. 
The reason for this assumption is that for more general graphs
lattice homology provides groups isomorphic to the corresponding
Heegaard Floer homology groups only after completion; in particular
after allowing infinite sums in the chain complex. For such elements, 
however, the definition of any filtration requires more care. To avoid these
technical difficulties, here we restrict ourselves to the negative definite
case.

For a framing $m_0\in \Z$ on $v_0$ denote the framed graph we get from
$\Gamma _{v_0}$ by $G_{v_0}=G_{m_0}(v_0)$.  (We will always assume
that $m_0$ is chosen in such a way that $G_{m_0}(v_0)$ is also
negative definite.)  Let $\Sigma \in H_2(X_{G_{v_0}}; {\mathbb {Q}})$
be a homology class satisfying:
\begin{equation}
  \label{eq:DefSigma}
\Sigma=v_0+ \sum_{j=1} ^n a_j \cdot v_j \quad (\text{where~}a_j\in {\mathbb {Q}}),
\quad {\mbox {and}} \quad v_j \cdot \Sigma = 0 \quad (\text{for all~}j>0).
\end{equation}
Notice that since $G=\Gamma _{v_0}-v_0$ is assumed to be negative
definite, the class $\Sigma$ exists and is unique. In the next two
section we will follow the convention that characteristic classes on
$G$ and subsets of $V-\{ v_0\}$ will be denoted by $K$ and $E$
respectively, while the characteristic classes on $G_{v_0}$ and
subsets of $V$ will be denoted by $L$ and $H$, resp.

\begin{lem}\label{lem:letezik}
  Let us fix a generator $[K,E]\in \Char (G)\times {\mathbb {P}}
  (V-v_0)$ of the lattice  chain complex $\CFinfComb (G)$ of
  $G$. There is a unique element $L=L_{[K,E]}\in \Char
  (G_{v_0})$ with the properties that for $H_E=E\cup \{ v_0\}$
\begin{itemize}
\item $L\vert _G =K$ and
\item $a_{v_0}[L, H_E]=
b_{v_0}[L, H_E]=0$.
\end{itemize}
\end{lem}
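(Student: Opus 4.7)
Any $L\in\Char(G_{v_0})$ with $L|_G = K$ is determined by the single integer $x = L(v_0)$, which is constrained only to satisfy $x \equiv v_0^2 \pmod 2$. The plan is to show that there is exactly one choice of $x$ for which condition (2) holds, and that the value of $x$ coming out of the calculation automatically has the required parity.

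The starting observation is that the identity $\min\{A_v([L,H]),B_v([L,H])\} = g([L,H])$ forces $\min\{a_v,b_v\} = 0$ to hold without any constraint on $L$, so condition (2) is equivalent to the single equation
\[
A_{v_0}([L,H_E]) \;=\; B_{v_0}([L,H_E]).
\]
The left-hand side is independent of $x$: for every $I\subset E = H_E\setminus\{v_0\}$ the quantity $f([L,I])$ depends only on the restriction $L|_G = K$ and on intersection pairings among vertices of $G$, so $A_{v_0}([L,H_E]) = g([L,E]) = g([K,E])$. For the right-hand side, the reformulation of $B_v$ given in \eqref{eq:refor} specializes to
\[
B_{v_0}([L,H_E]) \;=\; \frac{x + v_0^2}{2} \;+\; g([K',E]),
\]
where $K'$ is the characteristic element on $G$ defined by $K'(v) = K(v) + 2(v_0\cdot v)$ for $v\in V-v_0$; in particular $B_{v_0}$ is a strictly increasing affine function of $x$.

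Setting $A_{v_0} = B_{v_0}$ and solving then produces the unique candidate
\[
x \;=\; 2\bigl(g([K,E]) - g([K',E])\bigr) - v_0^2,
\]
which is an integer because $g$ takes integer values, and whose parity agrees with that of $v_0^2$. Hence the element $L$ determined by $L|_G = K$ and $L(v_0) = x$ lies in $\Char(G_{v_0})$, and both existence and uniqueness follow from the constancy of $A_{v_0}$ paired with the strict monotonicity of $B_{v_0}$ as a function of $x$. I do not anticipate any real obstacle in this argument; the only point that requires a moment of care is the bookkeeping between characteristic classes on $G$ versus on $G_{v_0}$, in particular checking that $K' = (L + 2v_0^*)|_G$ is genuinely characteristic on $G$, which reduces to the obvious fact that $v_0\cdot v\in\Z$ for every vertex $v$.
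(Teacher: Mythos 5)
Your proposal is correct and follows essentially the same route as the paper's proof: reduce condition (2) to the single equation $A_{v_0}([L,H_E])=B_{v_0}([L,H_E])$, note that $A_{v_0}=g([K,E])$ is independent of $L(v_0)$ while \eqref{eq:refor} makes $B_{v_0}$ an affine function of $L(v_0)$, solve uniquely for $L(v_0)=-v_0^2+2g([K,E])-2g([K+2v_0^*,E])$, and verify the parity using that $K$ (hence $K+2v_0^*|_G$) is characteristic so the $g$-terms contribute even integers. No gaps.
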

\begin{proof}
The equality $a_{v_0}[L, H_E]=b_{v_0}[L,H_E]$ is,
by definition, equivalent to $A_{v_0}([L, H_E])=
B_{v_0}([L, H_E ])$. By its definition 
$A_{v_0}([L, H_E])=g([K,E])$ is independent of 
$L(v_0)$ (and of the framing $m_0=v_0^2$ of $v_0$), while since
$K(v_j)=L(v_j)$ for $j>0$, by Equation~\ref{eq:refor}
\[
2B_{v_0}([L, H_E ])= L(v_0)+v_0^2+2g([K+2v_0^*, E]).
\] 
The identity $2A_{v_0}([L, H_E])=2B_{v_0}([L , H_E ])$ then uniquely
specifies $L(v_0)$:
\begin{align*}
L(v_0)&=-v_0^2+ 2g([K,E])-2g([K+2v_0^*, E]) \\
&=-v_0^2+\min_{I\subset E} \left(\sum _{v\in I } K(v)+(\sum _{v\in
  I}v)^2\right)- 
\min_{I\subset E}\left(\sum _{v\in I } K(v)+(\sum _{v\in I}v)^2+2v_0\cdot
(\sum _{v\in I}v)\right).
\end{align*}
Since $K$ is characteristic, both minima are even, and therefore
$L(v_0)\equiv v_0^2 \mod 2$, implying that $L$ is also characteristic.
\end{proof}

\begin{defn}
  We define the \emph{Alexander grading} $A([K,E])$ of a generator
  $[K,E]$ of $\CFinfComb (G)$ by the formula
\[
A([K,E])=\frac{1}{2}(L(\Sigma )+\Sigma ^2)\in {\mathbb {Q}},
\]
where $L=L_{[K,E]}$ is the extension of $K$ found in
Lemma~\ref{lem:letezik} and $\Sigma $ is the (rational) homology
element in $H_*(X_{G_{v_0}}; {\mathbb {Q}})$ associated to $v_0$ 
in Equation~\eqref{eq:DefSigma}.
(In the above formula we regard $L\in H^2 (X_{G_{v_0}}; \Z )$ as a
cohomology class with rational coefficients.) Notice that since $v_j
\cdot \Sigma =0$ for all $j>0$, the above expression is equal to
$\frac{1}{2}( L(\Sigma )+v_0\cdot \Sigma )$.

We extend this grading to expressions of the form $U^j\otimes [K,E]$
with $j\in \Z$ by
$$A(U^j\otimes [K,E])=-j+A([K,E]).$$
\end{defn}
In the definition above we fixed a framing $m_0$ on $v_0$, and it is
easy to see that both the values of $L(v_0)$ and of $\Sigma ^2=
v_0\cdot \Sigma $ depend on this choice.

\begin{lem}
  The value $A([K,E])$ is independent of the choice of the framing
  $m_0=v_0^2$ of $v_0$.
\end{lem}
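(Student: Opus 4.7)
The plan is to isolate the $m_0$-dependence in each of the two summands $L(\Sigma)$ and $\Sigma^2$ appearing in the definition of $A([K,E])$, and to observe that the two contributions cancel.

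First I would analyze $\Sigma$ itself. Writing $\Sigma=v_0+\sum_{j\ge 1}a_j v_j$, the defining conditions $v_j\cdot\Sigma=0$ for $j\ge 1$ form a linear system in the $a_j$ whose coefficients are the pairings $v_j\cdot v_k$ for $j,k\ge 1$ together with $v_j\cdot v_0$ for $j\ge 1$; crucially, $m_0=v_0^2$ never enters. Hence the coefficients $a_j$ are independent of $m_0$. Using the orthogonality $v_j\cdot\Sigma=0$ for $j\ge 1$, I then compute
\[
\Sigma^2=v_0\cdot\Sigma=m_0+\sum_{j\ge 1}a_j\,(v_0\cdot v_j),
\]
so $\Sigma^2=m_0+C_1$ for a constant $C_1$ depending only on the edges of $\Gamma_{v_0}$ and the $a_j$, hence independent of $m_0$.

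Next I would unpack the formula for $L(v_0)$ obtained in the proof of Lemma~\ref{lem:letezik}:
\[
L(v_0)=-m_0+2g([K,E])-2g([K+2v_0^*,E]).
\]
The two $g$-terms only involve evaluations on vertices of $G$ (that is, $v_j$ with $j\ge 1$), the intersection form of $G$, and the pairings $v_0\cdot v_j$ (which enter via the shift by $2v_0^*$, since $(K+2v_0^*)(v_j)=K(v_j)$ while $2v_0^*\cdot v_j=2(v_0\cdot v_j)$ contributes to the square term). None of these depend on $m_0$, so $L(v_0)=-m_0+C_2$ with $C_2$ independent of $m_0$. Since $L|_G=K$, the values $L(v_j)=K(v_j)$ for $j\ge 1$ are also independent of $m_0$, and therefore
\[
L(\Sigma)=L(v_0)+\sum_{j\ge 1}a_j K(v_j)=-m_0+C_3,
\]
again with $C_3$ independent of $m_0$.

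Adding the two expansions yields
\[
L(\Sigma)+\Sigma^2=(-m_0+C_3)+(m_0+C_1)=C_1+C_3,
\]
which is manifestly independent of $m_0$, proving the lemma. The only potentially delicate point in executing this plan is verifying that the formula for $L(v_0)$ from Lemma~\ref{lem:letezik} really has its $m_0$-dependence concentrated in the explicit $-v_0^2$ term and nowhere else; this is what the paragraph above is designed to check, by tracking how $v_0^*$ interacts with elements of $G$ inside the minimization defining $g$.
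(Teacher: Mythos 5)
Your proposal is correct and is essentially the paper's own argument: both isolate the $m_0$-dependence of $L(v_0)$ (via the explicit formula from Lemma~\ref{lem:letezik}, whose $g$-terms involve only $G$ and the pairings $v_0\cdot v_j$) and of $\Sigma^2=v_0\cdot\Sigma$, and observe that the $-m_0$ and $+m_0$ contributions cancel; your explicit check that the coefficients $a_j$ are independent of $m_0$ is a point the paper uses implicitly. One small slip: the pairings $v_0\cdot v_j$ enter $g([K+2v_0^*,E])$ through the evaluation term, since $(K+2v_0^*)(v)=K(v)+2\,v_0\cdot v$, not through the square term as your parenthetical suggests—but this does not affect the argument, as in either case only edge data and never $v_0^2$ appears.
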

\begin{proof}
  By the identities of Lemma~\ref{lem:letezik} it is readily visible
  that $L(v_0)$ (and hence $L(\Sigma )$) changes by $-1$ if $v_0^2$ is
  replaced by $v_0^2+1$. Since $\Sigma ^2 $ changes exactly as $v_0^2$
  does, the sum $L(v_0)+\Sigma ^2$ (and hence $\frac{1}{2}(L(\Sigma )
  +\Sigma ^2)$) does not depend on the chosen framing $v_0^2$ on $v_0$.
\end{proof}

Since $\Sigma$ is not an integral homology class, there is no reason
to expect that $A([K,E])$ is an integer in general.  On the other
hand, it is easy to see that if $K,K'$ represent the same spin$^c$
structure then $A([K,E])-A([K',E'])$ is an integer:
if $K'=K+2y^*$ (with $y\in H_2 (X_G; \Z )$) then 
\[
A([K,E])-A([K', E'])=\frac{1}{2}(L_{[K,E]}-L_{[K',E']})(v_0)\in \Z
\]
since $y\cdot \Sigma =0$ and both $L_{[K,E]}$ and $L_{[K',E']}$ are
characteristic cohomology classes.
\begin{defn}\label{def:mod1}
  For each spin$^c$ structure $\s$ of $G$ there is a rational number
  $i_{\s}\in [0,1)$ with the property that mod 1 the Alexander grading
  $A([K,E])$ for a pair $[K,E]$ with $\s _K=\s$ is congruent to
  $i_{\s}$.
\end{defn}

\begin{defn}
  The Alexander grading $A$ of generators  naturally
  defines a filtration $\{ {\mathcal {F}}_i\} $ on the chain complex
  $\CFinfComb (G)$ (which we will still denote by $A$ and will call
  the \emph{Alexander filtration}) as follows: an element $x\in
  \CFinfComb (G)$ is in ${\mathcal {F}}_i$ if every component of $x$
  (when written in the $\Field$-basis $U^j\otimes [K,E]$) has
  Alexander grading at most $i$. Intersecting the above filtration
  with the subcomplex $\CFmComb (G)$ we get the Alexander filtration
  $A$ on $\CFmComb (G)$. Similarly, the definition provides Alexander
  filtrations on the chain complexes $\CFaComb (G)$ and $\CFpComb
  (G)$.
\end{defn}
Equipped with the Alexander filtration, now $(\CFinfComb (G), \partial
)$ is a filtered chain complex, as the next lemma shows.
\begin{lem}\label{lem:filtralt}
  The chain complex $\CFinfComb (G)$ (and similarly, $\CFmComb (G)$
  and $\CFaComb (G)$) equipped with the Alexander filtration $A$ is a
  filtered chain complex, that is, if $x\in {\mathcal {F}}_i$ then
  $\partial x\in {\mathcal {F}}_i$. 
  \end{lem}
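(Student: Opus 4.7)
The plan is to verify directly that each of the two types of elementary components appearing in $\partial[K,E]$---the ``$a_v$-term'' $U^{a_v[K,E]}\otimes[K,E-v]$ and the ``$b_v$-term'' $U^{b_v[K,E]}\otimes[K+2v^*,E-v]$---has Alexander grading at most $A([K,E])$. Because $A(U^j\otimes x)=-j+A(x)$, this is enough for the full complex; the corresponding statements for $\CFmComb(G)$ and $\CFaComb(G)$ will then follow, since their Alexander filtrations are inherited from that on $\CFinfComb(G)$. The two main tools will be the explicit formula
\begin{equation*}
L_{[K,E]}(v_0)=-v_0^2+2g([K,E])-2g([K+2v_0^*,E])
\end{equation*}
extracted from the proof of Lemma~\ref{lem:letezik}, together with the reformulation of $B_v$ provided by Equation~\eqref{eq:refor}.

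For the $a_v$-term I would observe that the restriction to $G$ is unchanged, so $L_{[K,E-v]}(v_j)=L_{[K,E]}(v_j)$ for every $j>0$, and the entire difference $L_{[K,E-v]}(\Sigma)-L_{[K,E]}(\Sigma)$ comes from the $v_0$-values. The displayed formula above turns this into $2a_v[K,E]-2a_v[K+2v_0^*,E]$, so
\begin{equation*}
A([K,E-v])-A([K,E])=a_v[K,E]-a_v[K+2v_0^*,E]\le a_v[K,E]
\end{equation*}
by the nonnegativity of $a_v$, and the desired $A(U^{a_v[K,E]}\otimes[K,E-v])\le A([K,E])$ follows.

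The $b_v$-term carries an additional subtlety: since $(K+2v^*)(v_j)=K(v_j)+2v\cdot v_j$, the lift $L_{[K+2v^*,E-v]}$ differs from $L_{[K,E]}$ on the vertices $v_j$ with $j>0$ as well as on $v_0$. The extra contribution to $L_{[K+2v^*,E-v]}(\Sigma)-L_{[K,E]}(\Sigma)$ will be $2\sum_{j>0}a_j\,v\cdot v_j=-2v_0\cdot v$, where the equality uses the defining property $\Sigma\cdot v=0$ of the rational class $\Sigma$. For the $v_0$-contribution I plan to apply Equation~\eqref{eq:refor} twice: first to rewrite $2g([K+2v^*,E-v])=2b_v[K,E]-K(v)-v^2+2g([K,E])$, and then with $K$ replaced by $K+2v_0^*$ to handle $2g([K+2v^*+2v_0^*,E-v])$. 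The $K(v)$ and $v^2$ terms will cancel in pairs, leaving an excess of $+2v_0\cdot v$ coming from $(K+2v_0^*)(v)=K(v)+2v_0\cdot v$ which exactly offsets the $-2v_0\cdot v$ above; the net answer is
\begin{equation*}
A([K+2v^*,E-v])-A([K,E])=b_v[K,E]-b_v[K+2v_0^*,E]\le b_v[K,E],
\end{equation*}
which is the inequality required.

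The main obstacle will be this bookkeeping in the $b_v$-case: three independent sources of discrepancy (the change in $L(v_0)$, the change in $L(v_j)$ for $j>0$ induced by $v^*$, and the rational nature of $\Sigma$) must combine exactly. The cancellation of the $v_0\cdot v$ cross-terms is forced precisely by $\Sigma\cdot v=0$, which is ultimately what makes the Alexander grading a well-defined invariant in the first place; once these three ingredients are aligned, the argument reduces to the trivial observation that $a_v$ and $b_v$ are nonnegative.
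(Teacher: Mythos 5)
Your proposal is correct and follows essentially the same route as the paper: it reduces the claim to the two exact identities $A([K,E])-A(U^{a_v[K,E]}\otimes[K,E-v])=a_v[K+2v_0^*,E]$ and $A([K,E])-A(U^{b_v[K,E]}\otimes[K+2v^*,E-v])=b_v[K+2v_0^*,E]$, verified via the formula for $L_{[K,E]}(v_0)$ from Lemma~\ref{lem:letezik}, Equation~\eqref{eq:refor}, and $v^*(\Sigma)=0$, and then concludes by nonnegativity of $a_v$ and $b_v$. The bookkeeping you outline (including the cancellation of the $v_0\cdot v$ cross-terms in the $b_v$-case) matches the paper's computation exactly.
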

\begin{proof} We need to show that for a generator $[K,E]$ the
  inequality $A(\partial [K,E])\leq A([K,E])$ holds.  Recall that
  $\partial [K,E]$ is the sum of two types of elements. In the
  following we will deal with these two types separately, and verify a
  slightly stronger statement for these components.
  
  Let us first consider the component of the boundary of the shape of
  $U^{a_v[K,E]}\otimes [K,E-v]$ for some $v\in E$.  We claim that in
  this case
  \begin{equation} \label{eq:alexkul}
  A([K,E])-A(U^{a_v[K,E]}\otimes [K,E-v])=a_v[K+2v_0^*, E]
  \end{equation}
  holds, obviously implying that the Alexander grading of this
  boundary component is not greater than that of $[K,E]$.  To verify
  the identity of \eqref{eq:alexkul}, write $\Sigma $ as $v_0+\sum _{j=1}^n
  a_j\cdot v_j$, and note that twice the left-hand-side of
  Equation~\eqref{eq:alexkul} is equal to
\[
K(\sum_{j=1}^n a_j\cdot v_j)+L_{[K,E]}(v_0)+\Sigma
^2+2g([K,E-v])-2g([K,E])-K(\sum_{j=1}^n a_j\cdot v_j)-
L_{[K,E-v]}(v_0)-\Sigma ^2,
\]
which, after the simple cancellations and the extensions found in
Lemma~\ref{lem:letezik} is equal to
 \[
 2g([K,E])-2g([K+2v_0^*,
 E])+2g([K,E-v])-2g([K,E])-2g([K,E-v])+2g([K+2v_0^*, E-v].
 \]
 After further cancellations, this expression gives $2a_v[K+2v_0^*, E]$, verifying
 Equation~\eqref{eq:alexkul}. Since
 $a_v\geq 0$, Equation~\eqref{eq:alexkul} concludes the
 argument in this case.

Next we compare the Alexander grading of the term $U^{b_v[K,E]}\otimes
[K+2v^*,E-v]$ to $A([K,E])$. Now we claim that
\begin{equation}\label{eq:alexkulmasodik}
A([K,E])-A(U^{b_v[K,E]}\otimes [K+2v^*,E-v])=b_v[K+2v_0^*, E].
\end{equation}
As before, after substituting the defining formulae into the terms of
twice the left-hand-side of \eqref{eq:alexkulmasodik} we get
\[
K(\sum _{j=1}^n a_j\cdot v_j)+L_{[K,E]}(v_0)+\Sigma ^2 +2B_v[K,E]-2g([K,E]) -
\]
\[
-(K+2v^*)(\sum _{j=1}^n a_j\cdot v_j) -L_{[K+2v^*, E-v]}(v_0)-\Sigma ^2 .
\]
From the fact that $v^*(\Sigma )=0$ we get that $2v^*(\sum _{j=1}^n a_j
\cdot v_j)=-2v\cdot v_0$, hence by considering the form of $B_v$ given
in \eqref{eq:refor} we get that this term is equal to
\[
2g([K,E])-2g([K+2v_0^*, E])+2g([K+2v^*, E-v])+K(v)+v^2+2v\cdot v_0-2g([K,E])-
\]
\[
-2g([K+2v^*, E-v])
+2g([K+2v^*+2v_0^*, E-v]),
\]
and this expression is obviously equal to $2b_v[K+2v_0^*, E]$. Once again,
since $b_v\geq 0$, the statement of the lemma follows.
\end{proof}

\begin{defn}
  We define the filtered chain complex $(\CFinfComb (G), \partial ,
  A)$ (and similarly $(\CFmComb (G), \partial , A)$ and $(\CFaComb
  (G), \partial , A)$) the \emph{filtered lattice chain complex} of
  the vertex $v_0$ in the graph $\Gamma _{v_0}$.
\end{defn}

\begin{rem}
Recall that the chain complex $\CFmComb (G)$ splits according to the spin$^c$ 
structures of the 3-manifold $Y_G$. By intersecting the Alexander filtration
with the subcomplexes $\CFmComb (G, \s )$ for every spin$^c$ structure
$\s$, we get a splitting of the filtered chain complex according 
to spin$^c$ structures as well. The same remark applies to the
$\CFinfComb$ and $\CFaComb$ theories.
\end{rem}

\begin{defn}
  The \emph{knot lattice homology} $\HFKmComb (\Gamma _{v_0})$ (and
  $\HFKinfComb (\Gamma _{v_0}), \HFKaComb (\Gamma _{v_0})$) of $v_0$
  in the graph $\Gamma _{v_0}$ is defined as the homology of the
  graded object associated to the filtered chain complex $(\CFmComb
  (G), \partial , A)$ (and of $(\CFinfComb (G), \partial , A)$,
  $(\CFaComb (G), {\widehat {\partial }}, A)$ respectively).  As
  before, the groups $\HFKmComb (\Gamma _{v_0})$ (and similarly
  $\HFKinfComb (\Gamma _{v_0})$ and $\HFKaComb (\Gamma _{v_0})$) split
  according to the spin$^c$ structures of $Y_G$, giving rise to the groups
  $\HFKmComb (\Gamma _{v_0}, \s )$ for $\s \in$Spin$^c(Y_G)$.
\end{defn}

Let us fix a spin$^c$ structure $\s$ on $Y_G$. The group 
$\HFKmComb (\Gamma _{v_0}, \s )$ then splits according to the Alexander 
gradings as
\[
\oplus _a \HFKmComb (\Gamma _{v_0}, \s , a),
\]
and the components $\HFKmComb (\Gamma _{v_0}, \s , a)$ are further
graded by the absolute $\delta$-grading (originated from the cardinality
of the set $E$ for a generator $[K,E]$) and by the Maslov grading.

The relation between the Alexander filtration and the $J$-map is given by
the following formula:
\begin{lem}
$A(J[K,E]))=-A([K-2v_0^*, E])$.
\end{lem}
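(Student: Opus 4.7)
The plan is to unfold both sides using the definition $A([K,E]) = \frac{1}{2}(L_{[K,E]}(\Sigma) + \Sigma^2)$ and reduce the identity to
\[
L_{J[K,E]}(\Sigma) + L_{[K-2v_0^*, E]}(\Sigma) = -2\Sigma^2.
\]
Writing $\Sigma = v_0 + \sum_{j>0} a_j v_j$, I would split each $L(\Sigma)$ into a $v_0$-part and a $(\Sigma - v_0)$-part and treat them separately. On the $(\Sigma - v_0)$-part the computation is routine: for $j>0$ one has $L_{[K',E]}(v_j) = K'(v_j)$ by Lemma~\ref{lem:letezik}, and using $v \cdot \Sigma = 0$ for $v \in V - \{v_0\}$ together with $v_0 \cdot \Sigma = \Sigma^2$ (an immediate consequence of the defining equations for $\Sigma$), the two contributions $\pm \sum_{j>0} a_j K(v_j)$ cancel, leaving a combined contribution of $-2\Sigma^2 + 2v_0^2 + 2\sum_{v\in E} v_0 \cdot v$.

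The heart of the calculation is the $v_0$-part, which I would compute using the formula
\[
L_{[K,E]}(v_0) = -v_0^2 + 2g([K,E]) - 2g([K + 2v_0^*, E])
\]
extracted from the proof of Lemma~\ref{lem:letezik}, applied to each of the two classes. The key algebraic observation is that $J[K,E] + 2v_0^* = J[K - 2v_0^*, E]$ as generators (using $v_0 \notin E$), so the identity~\eqref{eq:useful} of Remark~\ref{rem:altfor}, applied first to $[K, E]$ and then to $[K - 2v_0^*, E]$, yields after minimizing over $I \subset E$ and reindexing $I \mapsto E - I$ the two formulas
\[
g(J[K,E]) = g([K,E]) - f([K,E]),\qquad g(J[K,E] + 2v_0^*) = g([K - 2v_0^*, E]) - f([K - 2v_0^*, E]).
\]
Substituting these in, the four $g$-terms telescope and one is left with
\[
L_{J[K,E]}(v_0) + L_{[K-2v_0^*,E]}(v_0) = -2v_0^2 + 2\bigl(f([K-2v_0^*, E]) - f([K, E])\bigr).
\]

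A direct computation from~\eqref{eq:fdef} gives $f([K-2v_0^*, E]) - f([K,E]) = -\sum_{v\in E} v_0 \cdot v$, so the $v_0$-contribution equals $-2v_0^2 - 2\sum_{v \in E} v_0 \cdot v$. Adding this to the $(\Sigma - v_0)$-part, the $\pm 2v_0^2$ and $\pm 2\sum_{v\in E} v_0 \cdot v$ terms cancel in pairs and exactly $-2\Sigma^2$ remains, finishing the proof. The main obstacle is purely bookkeeping: one has to track how the two shifts (by $2v_0^*$ in the second argument on the right, and by $2\sum_{v\in E} v^*$ in the definition of $J$ on the left) interact with the minimum defining $g$, which is precisely the symmetry the identity~\eqref{eq:useful} was introduced to handle.
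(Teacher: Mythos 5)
Your proof is correct and follows essentially the same route as the paper: both unfold the two Alexander gradings via the extension formula $L(v_0)=-v_0^2+2g([K,E])-2g([K+2v_0^*,E])$ from Lemma~\ref{lem:letezik}, use the identity~\eqref{eq:useful} of Remark~\ref{rem:altfor} to convert $g$ of the $J$-reflected classes into $g$ and $f$ of $[K,E]$ and $[K-2v_0^*,E]$, and close with the elementary identity $f([K,E])-f([K-2v_0^*,E])=v_0\cdot\sum_{v\in E}v$. The only differences are cosmetic: you keep $v_0^2$ general and add the two sides, whereas the paper fixes $v_0^2=0$ and computes each side separately.
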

\begin{proof}
  Recall that $J[K,E]=[-K-\sum _{v\in E}2v^*, E]$.  With the extension
  $L$ of $-K-\sum _{v\in E}2v^*$ given by Lemma~\ref{lem:letezik}
  (with the convention that $v_0^2=0$) we have that
\[
2A(J[K,E])=(-K-\sum _{v\in E}2v^*)(\Sigma - v_0)+L(v_0)+\Sigma ^2.
\]
Since $v^*(\Sigma )=0$, by the definition of $L(v_0)$ and the identity of Remark~\ref{rem:altfor}
this expression is equal to 
\[
-K(\Sigma -v _0)+2v_0\cdot (\sum _{v\in E}v)+\Sigma ^2+2g[K,E]-2f[K,E]-2g[K-2v_0^*, E]+
2f[K-2v_0^*, E].
\]
With the same argument the identity
\[
2A([K-2v_0^*, E])=K(\Sigma -v_0)-2v_0^*(\Sigma -v_0)+L'(v_0)+\Sigma
^2=
\]
\[
=K(\Sigma -v_0)-\Sigma ^2+2g[K-2v_0^*, E]-2g[K,E]
\]
follows (since $v_0\cdot \Sigma =\Sigma ^2$ and $v_0^2=0$). Now the identity of the lemma
follows from the observation that $f[K,E]-f[K-2v_0^*,E]-v_0\cdot (\sum _{v\in E}v)=0$.
\end{proof}

Define $J_{v_0}\colon \CFinfComb (G)\to \CFinfComb (G)$ by the 
formula
\[
[K,E]\mapsto [-K-\sum _{u\in E}2u^*-2v_0^*, E],
\]
on a generator $[K,E]$ and extend $U$-equivariantly and linearly to
$\CFinfComb (G)$. It is easy to see that $J_{v_0}^2=Id$. The result of
the previous lemma can be restated as
\[
A(J_{v_0}[K,E])=-A[K,E].
\]
This map is similar to the $J$-map, but takes the vertex $v_0$ into
special account. For the next statement recall from
Definition~\ref{def:mod1} the quantity $i _{\s }$ associated to a
spin$^c$ structure $\s$ on $G$.
\begin{lem} \label{lem:ujmap}
The map sending the generator $[K,E]\in \CFinfComb (G, \s )$
to $U^{i_{\s}-A([K,E])}J_{v_0}[K,E]$ is a chain map.
\end{lem}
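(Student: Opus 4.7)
The plan is to verify $\Phi\partial = \partial\Phi$ directly on a generator $[K,E] \in \CFinfComb(G,\s)$, where $\Phi([K,E]) := U^{i_\s - A([K,E])}J_{v_0}[K,E]$, extended $U$-equivariantly and linearly. Since every component of $\partial[K,E]$ differs from $[K,E]$ only by a class in $2H^2(X_G, Y_G; \Z)$, all terms remain in the $\s$-summand and the scalar $i_\s$ is uniform throughout the computation.

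For the first composition I would apply $\Phi$ termwise to $\partial[K,E]$. The crucial input is the pair of Alexander-grading identities \eqref{eq:alexkul}--\eqref{eq:alexkulmasodik} established in the proof of Lemma~\ref{lem:filtralt}: the Alexander gradings of $U^{a_v[K,E]}\otimes[K, E-v]$ and of $U^{b_v[K,E]}\otimes[K+2v^*, E-v]$ drop from $A([K,E])$ by exactly $a_v[K+2v_0^*, E]$ and $b_v[K+2v_0^*, E]$, respectively. Substituting into $\Phi$, the original exponents $a_v[K,E]$ and $b_v[K,E]$ cancel with the shifts in the $A$-argument, yielding
\[
\Phi\partial[K,E] = U^{i_\s - A([K,E])}\sum_{v\in E}\Bigl( U^{a_v[K+2v_0^*, E]}J_{v_0}[K, E-v] + U^{b_v[K+2v_0^*, E]}J_{v_0}[K+2v^*, E-v]\Bigr).
\]

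For the other composition I would compute $\partial\Phi[K,E] = U^{i_\s - A([K,E])}\partial[K', E]$, with $K' := -K - \sum_{u\in E}2u^* - 2v_0^*$. A direct bookkeeping check identifies $[K', E-v] = J_{v_0}[K+2v^*, E-v]$ and $[K'+2v^*, E-v] = J_{v_0}[K, E-v]$, so
\[
\partial\Phi[K,E] = U^{i_\s - A([K,E])}\sum_{v\in E}\Bigl( U^{a_v[K', E]}J_{v_0}[K+2v^*, E-v] + U^{b_v[K', E]}J_{v_0}[K, E-v]\Bigr).
\]
Matching the two expressions reduces the claim to the pair of identities
\[
a_v[K+2v_0^*, E] = b_v[K', E] \qquad \text{and} \qquad b_v[K+2v_0^*, E] = a_v[K', E].
\]
Since $K' = -(K+2v_0^*) - \sum_{u\in E}2u^*$, both follow at once from the $a_v/b_v$ interchange of~\eqref{eq:spinek} (used in the proof of Lemma~\ref{l:spinc}) applied to the shifted characteristic class $K + 2v_0^*$ in place of $K$.

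The anticipated obstacle is purely bookkeeping: one must track the Alexander-grading shifts of \eqref{eq:alexkul}--\eqref{eq:alexkulmasodik} and recognize that the shifted class $K+2v_0^*$ surfaces naturally, precisely so that the $J$-map symmetry of \eqref{eq:spinek}, already isolated in the proof that the ordinary $J$-map is a chain map, closes the argument.
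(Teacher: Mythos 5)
Your proof is correct and follows essentially the same route as the paper's: a termwise comparison of $\Phi\circ\partial$ and $\partial\circ\Phi$ in which the Alexander-grading shifts of \eqref{eq:alexkul}--\eqref{eq:alexkulmasodik} convert the exponents $a_v[K,E]$, $b_v[K,E]$ into $a_v[K+2v_0^*,E]$, $b_v[K+2v_0^*,E]$, and the $a_v/b_v$ interchange of \eqref{eq:spinek} applied to the shifted class $K+2v_0^*$ matches these with the exponents attached to $[-K-\sum_{u\in E}2u^*-2v_0^*,E]$; this is exactly the paper's identity \eqref{eq:kompi}. The only difference is that you write out both types of boundary components explicitly, where the paper treats the second as ``a similar computation.''
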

\begin{proof}
  We show first that the application of the above map to
  $U^{a_v[K,E]}\otimes [K,E-v]$ for some $v\in E$ is equal to
\[
U^{i_{\s}-A([K,E])}\cdot U^{b_v[-K-\sum _{u\in E}2u^*-2v_0^*,
  E]}\otimes [-K-\sum _{u\in E}2u^*-2v_0^*+2v^*, E-v] .
\]
The identification of $J_{v_0}(U^{a_v[K,E]}\otimes [K,E])$ with the
above term easily follows from the observation that
\begin{equation}\label{eq:kompi}
a_v[K,E]+i_{\s }-A([K,E-v])=i_{\s}-A([K,E])+b_v[-K-\sum _{u\in E}2u^*-2v_0^*].
\end{equation}
Equation~\eqref{eq:kompi}, however, is a direct consequence of the
equality $b_v[-K-\sum _{u\in E}2u^*-2v_0^*]=a_v[K+2v_0^*, E]$ and the
definitions of the terms describing the Alexander gradings. A similar
computation shows the identity for the other type of boundary
components (involving the terms of the shape
$U^{b_v[K,E]}\otimes [K+2v^*, E-v]$), concluding the proof.
\end{proof}

\begin{exas}\label{ex:trivik}
Two examples of the filtered chain complexes associated to certain  
graphs  can be determined as follows.
\begin{itemize}
\item Consider first the graph $\Gamma _{v_0}$ with two vertices
$\{v_0, v\}$, connected by a single edge, and with $(-1)$ as the framing
of $v$. The chain complex of $G=\Gamma _{v_0}-v_0$ has been determined
in Example~\ref{ex:minusz1}. 
A straightforward calculation  shows that $A([2n+1])=n+1$ and 
  \[ A( [ 2n+1, \{ v\}])
  = \left\{\begin{array}{ll}
       n+1 & {\text{if $n\geq 0$}} \\
       n & {\text{if $n< 0$}}.
      \end{array}
    \right.\] This formula then describes the Alexander filtration on
    $\CFmComb (G)$.  (Recall that $A(U^i \otimes [K,E])=-j
    +A([K,E])$.)  It is easy to see that the chain homotopy
    encountered in Example~\ref{ex:minusz1} respects this Alexander
    filtation, hence the filtered lattice chain complex $(\CFinfComb
    (G), A)$ is filtered chain homotopic to $\Field [\uuinv ]$,
    generated by the element $g$ in filtration level 0.  In
    conclusion, $\HFKaComb (\Gamma _{v_0})$ and $\HFKmComb (\Gamma
    _{v_0})$ are both generated by the element $[-1]$ (over $\Field $
    and $\Field [U]$, respectively), and the Alexander and Maslov
    gradings of the generator are both equal to $0$.

  \item In the second example consider the graph $\Gamma '_{v_0}$ on the same
    two vertices $\{ v_0, v\}$, now with no edges at all.  (That is,
    $\Gamma '_{v_0}$ is given from $\Gamma _{v_0}$ by erasing the
    single edge of $\Gamma _{v_0}$.) The background graph $G$ (and
    hence the chain complex $\CFmComb (G)$) is obviously the same as
    in the first example, but the Alexander grading $A'$ is much simpler
    now: $A'([2n+1])=A'([2n+1, \{ v\}])=0$ for all $n\in \Z$.  Once
    again, the chain homotopy of Example~\ref{ex:minusz1} is a
    filtered chain homotopy, hence we can apply it to determine the
    filtered lattice chain complex of $\Gamma _{v_0}'$, concluding
    that $(\CFinfComb (G), A')$ is filtered chain homotopic to $\Field
    [\uuinv ]$ with the generator in Alexander grading $0$.  Once
    again $\HFKmComb (\Gamma _{v_0}')$ is generated by $[-1]$.
  \end{itemize}
  In conclusion, the filtered chain complexes of the two examples are
  filtered chain homotopic to each other.
\end{exas}
\begin{rem} \label{rem:gkknot}
  Let $\Gamma ^k_{v_0}$ be constructed from $G_k$ of Remark~\ref{rem:gk} by
  attaching to it the vertex $v_0$ together with the edge connecting
  $v_0$ and the single $(-1)$-framed vertex.  Minor modifications of
  the argument above identifies the filtered lattice chain complex of
  $\Gamma ^k _{v_0}$ with $\Field [\uuinv ]$ (with the generator
  having Alexander grading 0). We will return to this example in 
  Section~\ref{sec:app}.
\end{rem}

\section{The master complex and the connected sum formula}
\label{sec:master}
As we will see in the next section, the filtered chain complexes
defined in the previous section (together with certain maps, to be
discussed below) contain all the relevant information we need for
calculating the lattice homologies of graphs we get by attaching
various framings to $v_0$.  The Alexander filtration $A$ on
$\CFinfComb (G)$ can be enhanced to a double filtration by considering
the double grading
\begin{equation}\label{eq:duplafil}
U^j \otimes [K,E]\mapsto (-j, A(U^j\otimes [K,E])) .
\end{equation}
In fact, this doubly filtered chain complex determines (and is
determined by) the filtered chain complex $(\CFmComb (G), A)$.  
Notice that multiplication by $U$ decreases Maslov grading by 2, $-j$
by 1 and Alexander grading by 1. 

In describing the further structures we need, it is slightly more
convenient to work with $\CFinfComb (G)$, and therefore we will
consider the doubly filtered chain complex above. In the following we
will find it convenient to equip $\CFinfComb (G)$ with the following
map. 
\begin{defn}\label{def:mapn}
The map $N \colon \CFinfComb (G) \to \CFinfComb (G)$ is defined by the
formula
\begin{equation}\label{eq:nmap}
N(U^j\otimes [K,E])= U^{i_{s_K}-A[K,E]+j}\otimes [K+2v_0^*, E] .
\end{equation}
\end{defn}
Notice that $N$ does not preserve the spin$^c$ structure of a given
element. Indeed, if $\s _{v_0}$ denotes the spin$^c$ structure we get
by twisting $\s$ with $v_0^*$ (and hence we get $c_1(\s _{v_0^*})=c_1
(\s )+2v_0^*$) then $N$ maps $\CFinfComb (G, \s )$ to $\CFinfComb (G,
\s _{v_0})$. In fact, by choosing another rational number $r$ (with
$r\equiv i_{\s _K} \mod 1$) instead of $i_{\s _K}$ in the above
formula, we get only multiples of $N$ (multiplied by appropiate monoms of
$U$).
\begin{lem}
  The map $N$ is a chain map, and provides an isomorphism between the
  chain complex $\CFinfComb (G, \s)$ and $\CFinfComb (G, \s
  _{v_0^*})$.
\end{lem}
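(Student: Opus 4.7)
The plan is to verify two things: (i) $N$ commutes with $\partial$, and (ii) $N$ admits a two-sided inverse. Both reduce to careful bookkeeping of $U$-exponents, leveraging the Alexander-grading shift identities already derived in the proof of Lemma~\ref{lem:filtralt}.

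For (i), I would apply both $N\circ\partial$ and $\partial\circ N$ to a generator $U^j\otimes[K,E]$ of $\CFinfComb(G,\s)$. The boundary $\partial[K,E]$ has contributions of two types, indexed by $v\in E$: the ``$a_v$-terms'' supported on $[K,E-v]$, and the ``$b_v$-terms'' supported on $[K+2v^*,E-v]$. After applying $N$ these become elements of $\CFinfComb(G,\s_{v_0^*})$ supported on $[K+2v_0^*,E-v]$ and $[K+2v^*+2v_0^*,E-v]$, respectively. The same distribution of supports occurs in $\partial(N(U^j\otimes[K,E]))$, using that $\s_{K+2v^*}=\s_K$, so the check reduces to matching two pairs of $U$-exponents. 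The $a_v$-terms agree precisely when
\[
a_v[K,E]+i_{\s_K}-A[K,E-v]=i_{\s_K}-A[K,E]+a_v[K+2v_0^*,E],
\]
i.e., when $A[K,E]-A[K,E-v]=a_v[K+2v_0^*,E]-a_v[K,E]$; this is exactly Equation~\eqref{eq:alexkul}. The $b_v$-terms agree analogously, reducing to Equation~\eqref{eq:alexkulmasodik}. Note also that by Definition~\ref{def:mod1} the quantity $i_{\s_K}-A[K,E]$ is an integer, so the $U$-power in~\eqref{eq:nmap} is well-defined.

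For (ii), I would exhibit an explicit inverse $M\colon \CFinfComb(G,\s_{v_0^*})\to\CFinfComb(G,\s)$ by the symmetric formula
\[
M(U^k\otimes[L,E])=U^{\,k+A[L-2v_0^*,E]-i_{\s_{L-2v_0^*}}}\otimes[L-2v_0^*,E].
\]
Substituting $L=K+2v_0^*$ and composing gives $M\circ N=\mathrm{id}$ on the generator $U^j\otimes[K,E]$, because the two $U$-exponents cancel; the composition $N\circ M$ works by the same computation with the roles reversed. Hence $N$ is bijective as an $\Field[U,U^{-1}]$-module map. (Alternatively, $M$ is manifestly a chain map by the same reasoning as in part (i), but once $M\circ N=\mathrm{id}$ and $N\circ M=\mathrm{id}$ are established this is automatic.)

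The only real obstacle is the bookkeeping: keeping track of which spin$^c$ structure each term lives in (in particular the identity $\s_{K+2v^*}=\s_K$, used so that the shift $i_{\s_K}$ is unchanged when passing to the $b_v$-terms), confirming that the $U$-exponents in~\eqref{eq:nmap} are integers, and matching the four exponent identities with those already verified in Lemma~\ref{lem:filtralt}. No new geometric input is required beyond what is encoded in Equations~\eqref{eq:alexkul} and~\eqref{eq:alexkulmasodik}.
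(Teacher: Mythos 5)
Your proof is correct and follows essentially the same route as the paper: the chain-map property is reduced to the two exponent identities $a_v[K,E]-A([K,E-v])=a_v[K+2v_0^*,E]-A([K,E])$ and $b_v[K,E]-A([K+2v^*,E-v])=b_v[K+2v_0^*,E]-A([K,E])$ (which are just Equations~\eqref{eq:alexkul} and~\eqref{eq:alexkulmasodik} rewritten), and the inverse is exhibited by the same explicit map $M$ shifting by $-2v_0^*$ with the compensating $U$-power. Your added remarks that $\s_{K+2v^*}=\s_K$ keeps the shift $i_{\s_K}$ unchanged and that $i_{\s_K}-A([K,E])\in\Z$ are correct points the paper leaves implicit.
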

\begin{proof} The fact that $N$ is a chain map follows from the 
identities
\begin{equation}\label{eq:akul}
a_v[K,E]-A([K,E-v])=a_v[K+2v_0^*, E]-A([K,E])
\end{equation}
and 
\begin{equation}\label{eq:bkul}
b_v[K,E]-A([K+2v^*, E-v])=b_v[K+2v_0^*, E]-A([K,E]).
\end{equation}
These identities follow easily from the definitions of the terms. To
show that $N$ is an isomorphism, let the spin$^c$ structure 
$\s _{-v_0^*}$ the denoted by $\t$ and consider the map
\[
M(U^j \otimes [K,E])=U^{A([K-2v_0^*, E])+j-i_{\t}}\otimes
[K-2v_0^*, E] .
\]
$M$ is also a chain map (as the identities similar to
\eqref{eq:akul} and \eqref{eq:bkul} show), and $M$ and $N$ are
inverse maps. It follows therefore that $N$ is an isomorphism between chain
complexes.
\end{proof}
Notice that $N$ can be written as the composition of the $J$-map with
the map $U^{i_{\s}-A([K,E])}J_{v_0}[K,E]$ considered in
Lemma~\ref{lem:ujmap}.

\begin{defn}
  Suppose that for $i=1,2$ the triples $(C_i, A_i, j_i)$ are doubly
  filtered chain complexes and $N_i\colon C_i \to C_i$ are given
  maps. Then the map $f\colon C_1\to C_2$ is an \emph{equivalence} of
  these structures if $f$ is a (doubly) filtered chain homotopy
  equivalence commuting with $N _i$, that is, $f\circ N_1=N_2\circ f$.
\end{defn}
With this definition at hand, now we can define the \emph{master complex} of 
$\Gamma _{v_0}$ as follows. 
\begin{defn}
  Suppose that $\Gamma _{v_0}$ is given. Consider $\CFinfComb (G)$
  with the double filtration $(-j,A)$ as above, together with the map
  $N$ defined in Definition~\ref{def:mapn}. The equivalence class of
  the resulting structure is the \emph{master complex} of $\Gamma
  _{v_0}$.
\end{defn}
As a simple example, a model for the master complex for each of the
two cases in Example~\ref{ex:trivik} can be easily determined:
regarding the map $U^j \otimes [K,E]\mapsto (-j , A (U^j\otimes
[K,E]))$ as a map into the plane, (a representative of) the master
complex will have a $\bfz _2$ term for each coordinate $(i,i)$, and
all other terms (and all differentials) are zero.  In addition, the
map $N$ in this model is equal to the identity.  (Note that in this
case the background 3-manifold is diffeomorphic to $S^3$, hence admits
a unique spin$^c$ structure.)  In short, the master complex for both
cases in Example~\ref{ex:trivik} is $\Field [\uuinv ]$, with the
Alexander grading of $U^j$ being equal to $j$ and with $N=id$.

Obviously, by fixing a spin$^c$ structure $\s \in Spin^c(Y_G)$ we can
consider the part $\MCFinfComb (\Gamma _{v_0}, \s )$ of the master
complex generated by those elements $U^j\otimes [K,E]$ which satisfy
the constraint $\s _K=\s$. As we noted earlier, $N$ maps components of
the master complex corresponding to various spin$^c$ structures into
each other.

\subsection{The connected sum formula}
Suppose that $\Gamma _{v_0}$ and $\Gamma '_{w_0}$ are two graphs with
distinguished vertices $v_0, w_0$. Their connected sum is defined in the following:
\begin{defn}
    Let  $\Gamma _{v_0}$ and $\Gamma '_{w_0}$ be two graphs with distinguished
  vertices $v_0$ and $w_0$. Their {\em connected sum} is the graph
  obtained by taking the disjoint union of $\Gamma_{v_0}$ and $\Gamma'_{w_0}$,
  and then identifying the distinguished vertices $v_0=w_0$. The
  resulting graph
  \[\Delta_{(v_0=w_0)}=\Gamma_{v_0}\#_{(v_0=w_0)} \Gamma'_{w_0}\]
  (which will be
  a tree/forest provided both $\Gamma _{v_0}$ and $\Gamma '_{w_0}$
  were trees/forests) has  a distinguished vertex $v_0=w_0$.
\end{defn}
\begin{rem}
  Notice that this construction gives the connected
  sum of the two knots specified by $v_0$ and $w_0$ in the two 3-manifolds
  $Y_G$ and $Y_{G'}$.
\end{rem}

Recall that for the disjoint graphs $G=\Gamma _{v_0}-v_0$ and
$G'=\Gamma ' _{w_0}-w_0$ the chain complex $\CFinfComb (G\cup G')$ of
their connected sum is simply the tensor product
of $\CFinfComb (G)$ and 
$\CFinfComb (G')$ (over $\Field [\uuinv ]$). We will
denote the Alexander grading/filtration on 
$\CFinfComb (G)$ by $A_{v_0}$ and on
$\CFinfComb (G')$ by $A_{w_0}$.
\begin{thm}\label{thm:alexosszead}
For the Alexander grading $A_{\#}$ of the generator
$[K_1, E_1]\otimes [K_2, E_2] \in \CFinfComb (G\cup G')$ 
induced by the distinguished vertex $v_0=w_0$ in $\Delta _{(v_0=w_0)}$
we have that
\[
A_{\#} ([K_1, E_1]\otimes [K_2, E_2])=A_{v_0}([K_1, E_1])+A_{w_0}([K_2, E_2]).
\]
\end{thm}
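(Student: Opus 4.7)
The proof is a direct computation from the definition $A([K,E]) = \tfrac{1}{2}(L(\Sigma) + \Sigma^2)$. The plan has three ingredients: identify the class $\Sigma_\#$ for the connected sum, verify that the canonical extension $L_\#$ splits, and combine to obtain the additivity.

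First, I would describe $\Sigma_\#$ explicitly. Writing $\Sigma_1 = v_0 + \sum_{j=1}^n a_j v_j$ and $\Sigma_2 = w_0 + \sum_{k=1}^m b_k w_k$ for the two rational classes coming from \eqref{eq:DefSigma}, the unique class in $H_2(X_{\Delta_{(v_0=w_0)}}; \mathbb{Q})$ with coefficient $1$ on $v_0$ and orthogonal to every other vertex is
\[\Sigma_\# \;=\; v_0 + \sum_{j=1}^n a_j v_j + \sum_{k=1}^m b_k w_k \;=\; \Sigma_1 + \Sigma_2 - v_0,\]
since there are no edges between the $v_j$'s and $w_k$'s in $\Delta_{(v_0=w_0)}-v_0$. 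Using the framing-independence of the Alexander grading established right after Lemma~\ref{lem:letezik}, I can simplify subsequent computations by taking the framing on $v_0$ (respectively $w_0$) to be $0$ in each of the three graphs. A direct computation of $\Sigma_\# \cdot v_0$ then gives $\Sigma_\#^2 = \Sigma_1^2 + \Sigma_2^2$.

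Second, I would compute $L_\#(v_0)$ using the explicit formula from the proof of Lemma~\ref{lem:letezik}:
\[L_\#(v_0) \;=\; 2g([K_1 \cup K_2,\, E_1 \cup E_2]) - 2g([K_1 \cup K_2 + 2v_0^*,\, E_1 \cup E_2]).\]
Here the connected-sum decomposition \eqref{eq:ConnectedSum} is the key: because there are no edges between $G$ and $G'$, for $I = I_1 \cup I_2$ with $I_i \subseteq E_i$ one has $(\sum_{v\in I_1} v)\cdot(\sum_{v\in I_2}v)=0$, so $f([K_1\cup K_2, I_1 \cup I_2]) = f([K_1, I_1]) + f([K_2, I_2])$. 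The analogous identity holds after shifting by $2v_0^*$, since $v_0^*(v) = v_0 \cdot v$ is a purely local intersection number that does not change under the inclusion of either $G_i$ into $\Delta_{(v_0=w_0)}$. Hence each minimum defining $g$ splits and I obtain $L_\#(v_0) = L_1(v_0) + L_2(v_0)$. Since $L_\#$ restricts to $K_1$ on $G$ and $K_2$ on $G'$, it follows that $L_\#(\Sigma_\#) = L_1(\Sigma_1) + L_2(\Sigma_2)$.

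Finally, putting these together:
\[2 A_\#([K_1,E_1]\otimes[K_2,E_2]) \;=\; L_\#(\Sigma_\#) + \Sigma_\#^2 \;=\; \bigl(L_1(\Sigma_1) + \Sigma_1^2\bigr) + \bigl(L_2(\Sigma_2) + \Sigma_2^2\bigr),\]
which is exactly $2\bigl(A_{v_0}([K_1,E_1]) + A_{w_0}([K_2,E_2])\bigr)$. The only real subtlety, and what I would treat as the main obstacle, is the careful bookkeeping of the meaning of the Poincar\'e dual $v_0^*$ across the three distinct four-manifolds $X_{G_{v_0}}$, $X_{G'_{w_0}}$, and $X_{\Delta_{(v_0=w_0)}}$ and the corresponding choices of framing; the splitting identifications succeed because $v_0^*$ enters only through the local intersection numbers $v_0 \cdot v$ for $v \in E_1 \cup E_2$, and the framing-independence lemma lets us align all three calculations by setting the framings on the distinguished vertex to $0$.
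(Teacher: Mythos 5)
Your proposal is correct and follows essentially the same route as the paper's proof: fix the framing $v_0^2=w_0^2=0$ (justified by framing-independence), show via the formula of Lemma~\ref{lem:letezik} that the canonical extension at the identified vertex is additive, $L_\#(v_0=w_0)=L_1(v_0)+L_2(w_0)$, and observe that $\Sigma^2$ also adds. Your spelling-out of why the minima defining $g$ split (no edges between the two sides, so $f$ splits even after the $2v_0^*$-shift) is exactly the computation the paper invokes implicitly when it cites Lemma~\ref{lem:letezik}.
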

\begin{proof}
  For simplicity fix $v_0^2=w_0^2=0$ and consider $\Sigma _{v_0}$ and
  $\Sigma _{w_0}$ on the respective sides of the connected sum. By the
  calculation from Lemma~\ref{lem:letezik} it follows that for
  the extensions $L_i$ of $K_i$ over the distinguished points $v_0,
  w_0$, and extension $L$ over $v_0=w_0$ we have
\[
L _{E_1\cup E_2}(v_0=w_0)=
(L_1)_{E_1}(v_0)+(L_2)_{E_2}(w_0).
\]
Since $\Sigma ^2 _{v_0=w_0}=(\Sigma _{v_0}+\Sigma _{w_0})^2=\Sigma ^2_{v_0}+\Sigma
^2_{w_0}$, the above equality shows that both terms of the defining
equation of the Alexander grading are additive, concluding the result.
\end{proof}
As a corollary, we can now show that
\begin{thm}
The master complexes of $\Gamma _{v_0}$ and $\Gamma '_{w_0}$ determine
the master complex of the connected sum
$\Delta _{(v_0=w_0)}$.
\end{thm}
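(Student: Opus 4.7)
The plan is to verify that each ingredient of the master complex of $\Delta_{(v_0=w_0)}$ --- the underlying $\Field[U,U^{-1}]$-chain complex, the double filtration $(-j,A)$, and the map $N$ --- is recovered as a tensor product construction from the corresponding data for $\Gamma_{v_0}$ and $\Gamma'_{w_0}$.

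First, I would observe that the graph obtained from $\Delta_{(v_0=w_0)}$ by deleting its distinguished vertex is precisely $G\cup G'$. Consequently, Equation~\eqref{eq:ConnectedSum} identifies $\CFinfComb(G\cup G')$ with $\CFinfComb(G)\otimes_{\Field[U,U^{-1}]}\CFinfComb(G')$ as chain complexes, and this identification automatically respects the $(-j)$-grading since $U$ acts as multiplication on both sides.

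Second, I would invoke Theorem~\ref{thm:alexosszead} to compare the Alexander filtration on the tensor product with the one on the connected sum: writing a generator of $\CFinfComb(G\cup G')$ as $[K_1,E_1]\otimes[K_2,E_2]$ under the identification above, the theorem gives $A_{\#}([K_1,E_1]\otimes[K_2,E_2]) = A_{v_0}([K_1,E_1]) + A_{w_0}([K_2,E_2])$. Together with the first step, this shows that the full double filtration on the master complex of $\Delta_{(v_0=w_0)}$ is assembled from the double filtrations on the two factors.

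Third, to handle $N$ I would use the key observation that the Poincar\'e dual class of the identified vertex in $H^2(X_{G\cup G'},Y_{G\cup G'};\Z)$ splits naturally as $v_0^{*}\oplus w_0^{*}$, because adjacencies of $v_0=w_0$ with vertices of $G$ are inherited from $\Gamma_{v_0}$ and those with vertices of $G'$ from $\Gamma'_{w_0}$. Thus the shift $K\mapsto K+2(v_0=w_0)^{*}$ appearing in Definition~\ref{def:mapn} decomposes as the pair of shifts $K_1\mapsto K_1+2v_0^{*}$ and $K_2\mapsto K_2+2w_0^{*}$. Combined with the additivity of $A$ from the second step and the compatible choice $i_{\s_{K_1\oplus K_2}}:=i_{\s_{K_1}}+i_{\s_{K_2}}$ for the normalization constants (permissible since this agrees mod $1$ with the canonical choice in $[0,1)$), a direct substitution into \eqref{eq:nmap} yields $N_{\Delta}=N_{\Gamma}\otimes N_{\Gamma'}$.

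The main obstacle I expect is the bookkeeping in this last step: ensuring that the rational shifts $i_{\s}$ chosen for the three master complexes are consistent across the two factors and the connected sum. This is resolved by noting that changing $i_{\s}$ by an integer only rescales $N$ by a fixed monomial in $U$, which is absorbed into the tensor product construction; in particular the additive choice on the connected sum is globally well-defined and makes $N_{\Delta}$ literally equal to the tensor product of the factor maps, completing the identification of the master complex of $\Delta_{(v_0=w_0)}$ from those of $\Gamma_{v_0}$ and $\Gamma'_{w_0}$.
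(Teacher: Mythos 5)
Your proposal is correct and follows essentially the same route as the paper: the tensor-product splitting \eqref{eq:ConnectedSum} of the chain complex, Theorem~\ref{thm:alexosszead} for additivity of the Alexander filtration, and the same mod-$1$ adjustment of the constants $i_{\s}$. The only cosmetic difference is that you check the compatibility of $N$ by direct substitution into \eqref{eq:nmap} using the splitting of the dual class of the identified vertex, whereas the paper phrases it via the additivity of the maps $J$ and $J_{v_0}$; the content is the same.
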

\begin{proof}
  As we saw above, the chain complexes for $\Gamma _{v_0}$ and $\Gamma
  ' _{w_0}$ determine the chain complex of $\Delta _{(v_0=w_0)}$ by
  taking their tensor product. This identity immediately shows that
  the $j$-filtration on the result is determined by the
  $j$-filtrations on the components. The content of
  Theorem~\ref{thm:alexosszead} is that the Alexander filtration on
  the connected sum is also determined by the Alexander filtrations of
  the pieces.  Finally, the map $N$ is built from the maps $J$ and
  $J_{v_0}$, which simply add for the connected sum, implying the
  result. A minor adjustment is needed in the last step: if $i _{\s }$
  and $i_{\s '}$ are the rational numbers determined by
  Definition~\ref{def:mod1} for the spin$^c$ structures $\s$ and $\s
  '$, then for $\s \# \s '$ we take either their sum (if it is in
  $[0,1)$) or $i_{\s }+i _{\s '}-1$.
\end{proof}
As a simple application of this formula, consider a
graph $\Gamma _{v_0}$ and associate to it two further graphs as
follows. Both graphs are obtained by adding a further element $e$ to $\Vertices (\Gamma _{v_0})$,
equipped with the framing $(-1)$. We can proceed in the following two ways:
\begin{enumerate}
\item Construct $\Gamma _{v_0}^+$ by adding an edge connecting $e$ and
  $v_0$ to $\Gamma _{v_0}$.
\item  Define $\Gamma _{v_0} ^d$ by simply adding  $e$ (with the fixed framing $(-1)$)
without adding any extra edge.
\end{enumerate}
For a pictorial presentation of the two graphs, see
Figure~\ref{fig:ketgraf}. It is easy to see that $\Gamma _{v_0}^+$ is
the connected sum of $\Gamma _{v_0}$ and the first example in
\ref{ex:trivik}, while $\Gamma _{v_0}^d$ is the connected sum of
$\Gamma _{v_0}$ and the second example of \ref{ex:trivik}.  Since the
master complexes of the two graphs of Example~\ref{ex:trivik}
coincide, we conclude that
\begin{cor}\label{c:egyenlok}
  The master complexes $\MCFinfComb (\Gamma ^+_{v_0})$ and
  $\MCFinfComb (\Gamma ^d_{v_0})$ are equal. In fact, both master complexes
  are equal to $\MCFinfComb (\Gamma _{v_0} )$.
\end{cor}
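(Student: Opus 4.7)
The plan is to read off this corollary as an immediate application of the connected sum formula for master complexes, together with the explicit computation of Example~\ref{ex:trivik}. First I would recall the identifications made in the paragraph just before the statement: $\Gamma^+_{v_0}$ is precisely the connected sum of $\Gamma_{v_0}$ with the two-vertex graph of the first bullet of Example~\ref{ex:trivik} (connected by an edge), and $\Gamma^d_{v_0}$ is the connected sum of $\Gamma_{v_0}$ with the two-vertex graph of the second bullet (where the extra edge is erased). These topological identifications need no proof beyond unwinding the definition of $\#_{(v_0=w_0)}$ given in this section.

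Next I would appeal to the connected sum theorem for master complexes proved immediately above. It asserts that $\MCFinfComb$ of a connected sum is determined by the master complexes of the summands (via the tensor product structure on $\CFinfComb$, the additivity of the Alexander filtration from Theorem~\ref{thm:alexosszead}, and the built-in compatibility of $N$ with $J$ and $J_{v_0}$). Applying this to the two summands above, I get
\[
\MCFinfComb(\Gamma^+_{v_0}) \;=\; \MCFinfComb(\Gamma_{v_0}) \otimes \MCFinfComb(\Gamma_{v_0}^{(1)}),
\qquad
\MCFinfComb(\Gamma^d_{v_0}) \;=\; \MCFinfComb(\Gamma_{v_0}) \otimes \MCFinfComb(\Gamma_{v_0}^{(2)}),
\]
where $\Gamma_{v_0}^{(1)}$ and $\Gamma_{v_0}^{(2)}$ denote the two graphs of Example~\ref{ex:trivik}.

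The key input is then the final sentence of Example~\ref{ex:trivik}: the filtered chain complexes of the two examples are filtered chain homotopy equivalent, both being equivalent to $\Field[U,U^{-1}]$ with the generator in bigrading $(0,0)$. As observed explicitly after the definition of the master complex, this common model has $N = \mathrm{id}$ (the underlying $3$-manifold is $S^3$, so there is a unique spin$^c$ structure and the shift $i_{\mathfrak{s}}-A$ vanishes on the generator). Consequently $\MCFinfComb(\Gamma_{v_0}^{(1)})$ and $\MCFinfComb(\Gamma_{v_0}^{(2)})$ agree as equivalence classes of doubly filtered complexes equipped with their $N$-map, giving the first assertion of the corollary.

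For the second assertion, I would observe that tensoring any master complex with the trivial model $(\Field[U,U^{-1}], N=\mathrm{id})$ over $\Field[U,U^{-1}]$ returns the original complex, since the generator sits in bidegree $(0,0)$ and acts as a unit under the tensor product, and the $N$-map on the summand is the identity so it does not alter $N$ on the other factor. Hence both master complexes coincide with $\MCFinfComb(\Gamma_{v_0})$. The only mild subtlety I foresee is verifying that the spin$^c$ bookkeeping described at the end of the connected sum theorem (the $i_{\mathfrak{s}\#\mathfrak{s}'}$ convention) is compatible with this tensor trivialization, but since the added summand has $Y \cong S^3$ with its unique spin$^c$ structure and $i_{\mathfrak{s}} = 0$ there, this reduces to a tautology.
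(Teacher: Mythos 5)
Your proposal is correct and follows essentially the same route as the paper: identify $\Gamma^+_{v_0}$ and $\Gamma^d_{v_0}$ as connected sums of $\Gamma_{v_0}$ with the two graphs of Example~\ref{ex:trivik}, invoke the connected sum theorem for master complexes, and observe that both summands contribute the trivial factor $\Field[\uuinv]$ (with generator in bidegree $(0,0)$ and $N=\mathrm{id}$), so the tensor product returns $\MCFinfComb(\Gamma_{v_0})$. Your extra remark on the spin$^c$ shift $i_{\s}$ being vacuous for the $S^3$ summand is a fine point the paper leaves implicit, but it is the same argument.
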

\begin{proof}
  Both master complexes are the tensor product (over $\Field [\uuinv
  ]$) of the master complex of $\Gamma _{v_0}$ and of $\Field [\uuinv
  ]$, concluding the argument.
\end{proof}
\begin{figure}[ht]
\begin{center}
\epsfig{file=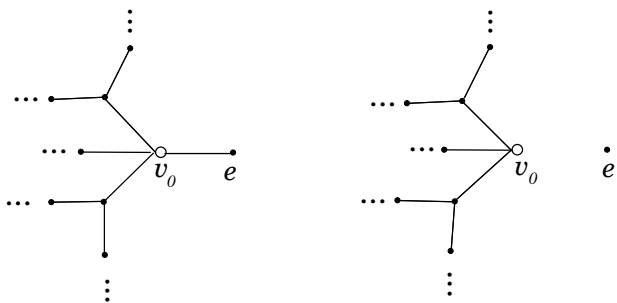, height=4cm}
\end{center}
\caption{{\bf The two graphs $\Gamma _{v_0}^+$ (on the left) and
    $\Gamma _{v_0}^d$ (on the right) derived from a given graph
    $\Gamma _{v_0}$.} The framing of $e$ is $(-1)$ in both cases, and
  $v_0$ is the distinguished vertex (hence admits no framing and is
  denoted by a hollow circle) in both graphs.}
\label{fig:ketgraf}
\end{figure}

\section{Surgery along knots}
\label{sec:surgery}
A formula for computing the lattice homology for the graph
$G_{v_0}$ (we get from $\Gamma _{v_0}$ by attaching appropriate
framing to $v_0$) can be derived from the knowledge of the master 
complex of $\Gamma _{v_0}$, according to the following result:
\begin{thm}\label{thm:mutet}
  The master complex $\MCFinfComb (\Gamma _{v_0})$ of $\Gamma _{v_0}$
  determines the lattice homology of the result of the 
  graph obtained by marking $v_0$ with any integer $m_0\in \Z$,
  for which the resulting graph is negative definite.
\end{thm}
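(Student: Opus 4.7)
The plan is, for any admissible framing \(m_0\) on \(v_0\), to reconstruct \(\CFinfComb(G_{v_0})\) up to chain isomorphism as the total complex of a bicomplex built from the master complex data \((\CFinfComb(G), \partial, A, N)\); since the \((-j)\)-filtration on \(\CFinfComb(G_{v_0})\) recovers \(\CFmComb\), \(\CFpComb\), and \(\CFaComb\), this is enough for every version of the lattice homology of \(G_{v_0}\). The construction parallels the mapping-cone surgery formula of \cite{OSzint} in Heegaard Floer theory.

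I first parametrize each generator \([L, H]\) of \(\CFinfComb(G_{v_0})\) by four pieces of data: \(K = L\vert_G \in \Char(G)\), \(E = H \setminus \{v_0\} \subseteq V - v_0\), \(\epsilon = 1_{\{v_0 \in H\}} \in \{0,1\}\), and an integer \(s\) determined by \(L(v_0) = L_{[K,E]}(v_0) + 2s\), where \(L_{[K,E]}\) is the canonical extension of Lemma~\ref{lem:letezik}. This identifies \(\CFinfComb(G_{v_0})\), as an \(\Field[\uuinv]\)-module, with \(\bigoplus_{s \in \Z} \bigoplus_{\epsilon \in \{0,1\}} \CFinfComb(G)\). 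Every differential contribution at a vertex \(v \neq v_0\) depends on \(L\) only through \(K\), so it reproduces the differential of \(\CFinfComb(G)\) on each column \((s, \epsilon)\). The genuinely new contributions are the \(v_0\)-terms, available only when \(\epsilon = 1\): one sends \((K, s, E, 1) \mapsto (K, s, E, 0)\) with coefficient \(U^{a_{v_0}[L, H_E]}\), and the other sends \((K, s, E, 1) \mapsto (K + 2 v_0^*, s', E, 0)\) with coefficient \(U^{b_{v_0}[L, H_E]}\), where \(s'\) is the integer re-encoding \(L(v_0) + 2 m_0\) relative to the canonical extension at the new \(K\).

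The key computational step is twofold. First, using the reformulation~\eqref{eq:refor} for \(B_{v_0}\) together with the fact that \(A_{v_0}([L, H_E]) = g([K, E])\) is independent of \(L(v_0)\), a direct calculation shows
\[
a_{v_0}[L, H_E] = \max(0, -s), \qquad b_{v_0}[L, H_E] = \max(0, s),
\]
independent of \(K\) and \(E\). Second, again using Lemma~\ref{lem:letezik}, the shift \(s' - s\) induced by \(L \mapsto L + 2 v_0^*\) can be computed explicitly in terms of the Alexander gradings \(A([K, E])\), \(A([K+2v_0^*, E])\), and the rational self-intersection \(\Sigma^2\). The \(U\)-power appearing in the definition of \(N\) is precisely what is required to make the column-to-column identification via \(K \mapsto K + 2v_0^*\) compatible with the Alexander grading and the \(\spinc\) bookkeeping, so the ``diagonal'' arrow of the bicomplex is induced by the map \(N\) from Definition~\ref{def:mapn} (composed with the \(\epsilon\)-drop).

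Assembling these ingredients realizes \(\CFinfComb(G_{v_0})\) as the total complex of a bi-infinite bicomplex whose columns are copies of \(\CFinfComb(G)\), whose \(\epsilon\)-dropping arrows are the piecewise-linear \(U\)-power maps \(\max(0, \pm s)\), and whose column shift is implemented by \(N\) --- with the framing \(m_0\) entering only through the explicit formula for \(s' - s\) and through \(\Sigma^2\). The principal obstacle will be the \(\spinc\)-bookkeeping: the spin\(^c\)-structures on \(Y_{G_{v_0}}\) form an affine \(H^2(Y_{G_{v_0}}; \Z)\)-space, so for each fixed \(\s \in \SpinC(Y_{G_{v_0}})\) one must collapse the bi-infinite family of columns modulo the equivalence \(L \sim L + 2 v_0^*\) (and modulo \(L \sim L + 2 v_j^*\) for \(j > 0\), which simultaneously shifts \(s\) by \(v_0 \cdot v_j\)) down to a finite direct sum, and match the resulting \(m_0\)-dependent identification with the normalization constants \(i_\s\) in \(N\). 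Once this is carried out, every piece of the reconstructed bicomplex is dictated by the master complex alone, proving the theorem.
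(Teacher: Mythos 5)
Your overall strategy is the same as the paper's: split $\CFmComb(G_{v_0})$ according to whether $v_0$ lies in the set, index the resulting columns by the value of $L$ on $v_0$, and express the two $v_0$-differentials through the Alexander filtration and the map $N$. Your computation $a_{v_0}[L,H_E]=\max(0,-s)$, $b_{v_0}[L,H_E]=\max(0,s)$ is correct (with $s=i-A([K,E])$ it is equivalent to the paper's identity \eqref{eq:relab}). But there is a genuine gap at the step where you claim that ``every differential contribution at a vertex $v\neq v_0$ depends on $L$ only through $K$, so it reproduces the differential of $\CFinfComb(G)$ on each column $(s,\epsilon)$.'' This is true only for the $\epsilon=0$ columns. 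When $v_0\in H$, the weights $g([L,H])$ and $g([L,H-v])$ entering $a_v[L,H]$ and $b_v[L,H]$ are minima over subsets $I\subseteq H$ that may contain $v_0$, and for such $I$ one has $f([L,I])=f([K,I-v_0])+\tfrac12(L(v_0)+v_0^2)+v_0\cdot\sum_{w\in I-v_0}w$; hence these exponents depend on $L(v_0)$ and on the framing $m_0$, and the $\epsilon=1$ columns are \emph{not} copies of $(\CFinfComb(G),\partial)$. This is exactly the content of the paper's Lemma~\ref{l:izom}: the $v_0\in H$ part in a fixed level $i$ is identified with the subcomplex $S_i$ (cut out by the double filtration $j\geq 0$, $A\leq i$) only via the twisted map $F([L,H])=U^{a_{v_0}[L,H]}\otimes[L\vert_G,H-v_0]$, and proving that $F$ is a chain isomorphism requires the identities of type \eqref{eq:olto} and \eqref{eq:olto2} together with a separate surjectivity argument using \eqref{eq:relab}. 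Your proposal skips this identification entirely, and as stated the bicomplex you describe is not isomorphic to $\CFinfComb(G_{v_0})$.

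A secondary problem is your choice of column index. You normalize $s$ against the canonical extension $L_{[K,E]}$ of Lemma~\ref{lem:letezik}, but $L_{[K,E]}(v_0)$ depends on $E$ (and on $K$), so $s$ is \emph{not} preserved by the internal differentials $[K,E]\mapsto[K,E-v]$, $[K+2v^*,E-v]$: your ``columns'' are not subcomplexes. The paper instead indexes by $i=\tfrac12(L(\Sigma)+\Sigma^2)$, which is preserved precisely because $v^*(\Sigma)=0$ for $v\neq v_0$; this is what makes the decompositions $\mathbb{B}=\oplus_i B_i$ and $\mathbb{T}=\oplus_i T_i$ into decompositions of chain complexes. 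Finally, the identification of the second connecting map with $U^{i-i_\s}N$ (and of the shift of target column by $v_0^*(\Sigma)=\Sigma^2$) is asserted but not verified in your sketch; in the paper this is the computation showing $\eta_2\circ F=F'\circ\partial_2$, again resting on \eqref{eq:relab}. So the architecture is right, but the two chain-level identifications that carry the actual content — the twisted isomorphism $T_i\cong S_i$ and the matching of $\partial_2$ with $N$ — are missing, and the column-by-column claim you substitute for the first of them is false.
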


In order to verify this result, first we describe the chain complex
computing lattice homology as a mapping cone of related objects.  As
before, consider the tree $\Gamma _{v_0}$ in which each vertex except
$v_0$ is equipped with a framing. The plumbing graph $G$ is then given
by deleting $v_0$ from $\Gamma _{v_0}$. Let $G_{v_0}=G_{n}(v_0)$
denote the plumbing graph we get from $\Gamma _{v_0}$ by attaching the
framing $n\in \Z$ to $v_0$.  Suppose that for the chosen $n$ the graph
$G_{v_0}$ is negative definite. Our immediate aim is to present the
chain complex $\CFmComb (G_{v_0})$ as a mapping cone of related
objects. These related objects then will be reinterpreted in terms of
the master complex $\MCFinfComb (\Gamma _{v_0})$.

Consider the two-step filtration on $\CFmComb(G_{v_0})$ where the
filtration level of $U^j\otimes [L,H]$ is 1 or 0 according to whether
$v_0$ is in $H$ or $v_0$ is not in $H$. Denoting the elements with
filtration at most 0 by ${\mathbb B}$, we get a short exact sequence
\[
0 \longrightarrow {\mathbb B} \longrightarrow
\CFmComb(G_{v_0})\longrightarrow {\mathbb D} \longrightarrow 0 .
\]
Explicitly, ${\mathbb {B}}$ is generated by pairs $[L,H]$ with
$v_0\not\in H$, while a  nontrivial element in ${\mathbb {D}}$ can be
represented by (linear combinations of) terms $U^j\otimes [L,H]$ where
$v_0\in H$. Indeed, the quotient complex ${\mathbb {D}}$ can be
identified with the complex $({\mathbb {T}}, \partial _{\mathbb
  {T}})$, where ${\mathbb {T}}$ is generated over $\Field [U]$ by
those elements $[L,H]$ of $\Char (G)\times {\mathbb {P}}(V)$ for which
$v_0\in H$, and
\[
{\partial _{\mathbb {T}}[L,H]} = 
\sum_{v\in H-v_0} U^{a_v[L,H]}\otimes [L,H-v] + \sum_{v\in H-v_0}
U^{b_v[L,H]}\otimes [L+2v^*,H-v] .
\]
Notice that there are two obvious maps $\partial _1, \partial _2 
\colon {\mathbb {T}}\to {\mathbb {B}}$:
For a generator $[L,H]$ of ${\mathbb {T}}$ (with $v_0\in H$)
consider 
\begin{equation}\label{eq:hatarok}
\partial _1 [L,H]=U^{a_{v_0}[L,H]}\otimes [L,H-v_0], \qquad \partial _2
[L,H]= U^{b_{v_0}[L,H]} \otimes [L+2v_0^*, H-v_0].
\end{equation}
It follows from $\partial ^2=0$ that both maps 
 $\partial _1 , \partial _2 \colon {\mathbb {T}} \to  {\mathbb {B}}$ 
 are chain maps.
It is easy to see that 
\begin{lem}
  The mapping cone of $({\mathbb {T}}, {\mathbb {B}}, \partial _1
  + \partial _2)$, is chain homotopic to the chain complex $\CFmComb
  (G_{n}(v_0))$ computing the lattice homology $\HFmComb (G_{n}(v_0))$
  of the result of $n$-surgery on $v_0$. \qed
\end{lem}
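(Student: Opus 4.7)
The plan is to show that $\CFmComb(G_{v_0})$, as a chain complex, is in fact literally isomorphic to the mapping cone $\mathrm{Cone}(\partial_1+\partial_2\colon{\mathbb T}\to{\mathbb B})$; the claimed chain homotopy equivalence will then be tautological.

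First I would verify the module-level splitting. The generators $[L,H]$ of $\CFmComb(G_{v_0})$ partition according to whether $v_0\in H$ or $v_0\notin H$, and this partition matches the short exact sequence $0\to{\mathbb B}\to\CFmComb(G_{v_0})\to{\mathbb D}\to 0$ from the two-step filtration. Since the boundary map $\partial$ for $\CFmComb(G_{v_0})$ only ever removes vertices from $H$, the submodule ${\mathbb B}$ is indeed a subcomplex; and since ${\mathbb D}$ is defined as a quotient, the natural identification ${\mathbb D}\cong {\mathbb T}$ (as the text describes) intertwines the induced quotient differential with $\partial_{\mathbb T}$.

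Next I would compute $\partial$ on a generator $[L,H]$ with $v_0\in H$. Splitting the sums in the definition of $\partial$ according to whether the removed vertex $v$ equals $v_0$ or not, one obtains
\[
\partial[L,H]=\partial_{\mathbb T}[L,H]+U^{a_{v_0}[L,H]}\otimes[L,H-v_0]+U^{b_{v_0}[L,H]}\otimes[L+2v_0^*,H-v_0],
\]
where the first term lies in ${\mathbb T}$ (since $v_0$ remains in $H-v$ for $v\ne v_0$) and the remaining two are, by definition, $\partial_1[L,H]+\partial_2[L,H]$ and lie in ${\mathbb B}$. Together with the obvious computation $\partial[L,H]=\partial_{\mathbb B}[L,H]$ when $v_0\notin H$, this shows that $\partial$ on $\CFmComb(G_{v_0})$ is, block by block, exactly the mapping cone differential $\bigl(\partial_{\mathbb T},\partial_{\mathbb B},\partial_1+\partial_2\bigr)$.

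The only remaining point is that $\partial_1$ and $\partial_2$ are individually chain maps ${\mathbb T}\to{\mathbb B}$, not merely that their sum is. This is where one expects the mild obstacle, but it is not serious: expanding $\partial^2[L,H]=0$ and separating the resulting terms according to which of the two characteristic vectors $L$ or $L+2v_0^*$ appears, and whether the removed pair includes $v_0$, yields four identities; two of them say $\partial_1^2=0$ and $\partial_2^2=0$ inside ${\mathbb B}$, which follow from the same four identities recorded in the proof that $\partial^2=0$ (applied now only to removals of $v\ne v_0$), and the other two say $\partial_1\circ\partial_{\mathbb T}=\partial_{\mathbb B}\circ\partial_1$ and $\partial_2\circ\partial_{\mathbb T}=\partial_{\mathbb B}\circ\partial_2$, which follow by the same bookkeeping argument (equations analogous to \eqref{eq:olto}, \eqref{eq:olto2}) restricted to the two separate homotopy classes of characteristic vectors. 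Once these identities are in hand, the module-level identification of the previous paragraph upgrades to an isomorphism of chain complexes, which certainly implies the claimed chain homotopy equivalence. \qed
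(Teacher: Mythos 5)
Your proposal is correct and is precisely the argument the paper leaves implicit (the lemma is stated with a \qed and no proof, being regarded as immediate): $\CFmComb (G_{n}(v_0))$ splits as an $\Field[U]$-module according to whether $v_0\in H$, and the differential decomposes block-wise into $\partial_{\mathbb T}$, $\partial_{\mathbb B}$ and $\partial_1+\partial_2$, so the complex is literally the mapping cone and the homotopy equivalence is an isomorphism. One small wording slip: the compositions ``$\partial_1^2$'' and ``$\partial_2^2$'' do not typecheck (these maps go from ${\mathbb T}$ to ${\mathbb B}$); the identities actually needed are $\partial_{\mathbb T}^2=0$, $\partial_{\mathbb B}^2=0$ (automatic, since ${\mathbb B}$ is a subcomplex and ${\mathbb T}$ carries the quotient differential) and the chain-map property of $\partial_1+\partial_2$, which follows from $\partial^2=0$ exactly as you say --- the finer statement that $\partial_1$ and $\partial_2$ are separately chain maps follows from your separation by characteristic vectors but is not needed for this lemma.
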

Next we identify the above terms using the Alexander filtration on
$\CFinfComb (G)$ induced by $v_0$.  We will use the class $\Sigma$ characterized
in Equation~\eqref{eq:DefSigma}.
\begin{defn}
  Consider the subcomplex $B_i\subset {\mathbb B}\subset \CFmComb
  (G_{v_0})$ generated by $[L,H]$ where $\frac{1}{2}(L(\Sigma )+\Sigma
  ^2)= i\in {\mathbb {Q}}$. (Recall that since $[L,H]$ is in ${\mathbb
    {B}}$, the set $H$ does not contain $v_0$. Also, as before, we regard
  $L\in H^2 (X_{G_{v_0}} ; \Z )$ as a cohomology class with rational
  coefficients.)  Since $v_j^*(\Sigma )=v_j\cdot \Sigma =0$ for all
  $j\neq 0$, it follows that $B_i$ is, indeed, a subcomplex of
  ${\mathbb B}$ for any rational $i$, and obviously $\oplus _{i\in
    {\mathbb {Q}}} B_i = {\mathbb {B}}$.
\end{defn}

\begin{prop}\label{p:bperiod}
There is an isomorphism   $\varphi \colon B_i\to B_{i+1}$.
\end{prop}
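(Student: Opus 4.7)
The plan is to construct $\varphi$ as a simple shift of the $v_0$-value of $L$. Given a generator $[L,H]$ of $B_i$—so $v_0\notin H$—let $L'$ denote the unique characteristic cohomology class on $G_{v_0}$ with $L'(v_0)=L(v_0)+2$ and $L'(v_j)=L(v_j)$ for every $j>0$, and set $\varphi([L,H])=[L',H]$, extended $U$-equivariantly. The class $L'$ is characteristic because $L(v_0)+2\equiv L(v_0)\equiv v_0^2\pmod 2$ while the remaining basis values are untouched. The grading shift is then immediate: with $\Sigma=v_0+\sum_{j>0}a_j v_j$ we have $L'(\Sigma)=L(\Sigma)+2$, so $\tfrac12(L'(\Sigma)+\Sigma^2)=i+1$.

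The main step is showing that $\varphi$ is a chain map. The key observation, which I would verify first, is that if $v_0\notin H$, then the weights $f([L,I])$ appearing in $\partial[L,H]$—and consequently all of $g$, $A_v$, $B_v$, $a_v$, and $b_v$—depend on $L$ only through the restriction $L|_G$: every $I$ involved is a subset of $V-\{v_0\}$, so $L(v_0)$ never enters the formula and only the intersection form of $G$ is used. Since $L'|_G=L|_G$, all these coefficients are invariant under $\varphi$. Moreover, the shifts $L\mapsto L+2v^*$ appearing in $\partial$ (with $v\in H$, hence $v\neq v_0$) commute with the operation $L\mapsto L'$, in the sense that $(L+2v^*)'=L'+2v^*$ as characteristic classes on $G_{v_0}$. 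Combining these two facts yields $\partial\circ\varphi=\varphi\circ\partial$.

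The map $\varphi$ is then clearly bijective on generators, with inverse the analogous decrement $L(v_0)\mapsto L(v_0)-2$, so it is the desired isomorphism. I do not anticipate any serious obstacle; the only mildly subtle point is the realization that the differential on $\mathbb{B}$ ignores the value $L(v_0)$, which is precisely what makes the $i$-grading admit a $\mathbb{Z}$-periodic automorphism at all. Note that $\varphi$ need not preserve spin$^c$ structures on $Y_{G_{v_0}}$, but this is harmless since $\mathbb{B}$ is defined as the sum over all such structures.
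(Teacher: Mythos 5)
Your proposal is correct and is essentially the paper's own argument: the same shift $L(v_0)\mapsto L(v_0)+2$, with the isomorphism property justified by the observation that, since $v_0\notin H$, the weights $f([L,I])$ (hence all $a_v$, $b_v$) depend only on $L\vert_G$, which the shift leaves unchanged. Your extra checks (that $L'$ is characteristic, that $(L+2v^*)'=L'+2v^*$ for $v\neq v_0$, and the remark about spin$^c$ structures) just make explicit what the paper leaves implicit.
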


\begin{proof}
  Define the map $\varphi $ by sending a generator $[L,H]$ of $B_i$ to
  $[L',H]$ where
  \[  L'(v_j)
  = \left\{\begin{array}{ll}
      L(v_0) + 2 & {\text{if $j=0$}} \\
      L(v_j)  & {\text{if $j\neq 0$}}.
      \end{array}
    \right.\] Since $v_0\not \in H$, it follows that
    $f([L,H])=f([L',H])$ (where $f$ is defined in
    Equation~\eqref{eq:fdef}), hence the resulting map is an
    isomorphism between the chain complexes $B_i$ and $B_{i+1}$.
\end{proof}

\begin{prop}\label{prop:izomphi}
The sum $B=\bigoplus_{0\leq i < 1} B_i$ is isomorphic 
to $\CFmComb(G)$. 
\end{prop}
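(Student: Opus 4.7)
My plan is to construct an explicit $\Field[U]$-linear isomorphism $\Psi\colon\CFmComb(G)\to B$ on generators, and then verify it intertwines the two differentials. Given a generator $[K,E]$ of $\CFmComb(G)$, so $K\in\Char(G)$ and $E\subset V-\{v_0\}$, I will set $\Psi([K,E])=[L,E]$, where $L\in\Char(G_{v_0})$ is the unique characteristic extension of $K$ with $\frac{1}{2}(L(\Sigma)+\Sigma^2)\in[0,1)$; the inverse sends a generator $[L,H]\in B_i$ (for which $v_0\notin H$) to $[L|_G,H]\in\CFmComb(G)$.

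To justify existence and uniqueness of the extension $L$, note that characteristic extensions of $K$ to $G_{v_0}$ are parametrized by $L(v_0)\in v_0^2+2\Z$. Because $\Sigma=v_0+\sum_{j>0}a_j v_j$ has coefficient $1$ on $v_0$ (see \eqref{eq:DefSigma}) and $\Sigma^2$ does not depend on $L$, replacing $L(v_0)$ by $L(v_0)+2$ shifts $\frac{1}{2}(L(\Sigma)+\Sigma^2)$ by exactly $1$. Hence within each $\varphi$-orbit of Proposition~\ref{p:bperiod} there is exactly one representative landing in some $B_i$ with $i\in[0,1)$, and $\Psi$ is a bijection of $\Field[U]$-bases. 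Equivalently, this is the observation that $B_i\to B_{i+1}$ is an isomorphism and each orbit crosses the fundamental domain $0\le i<1$ exactly once.

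Checking that $\Psi$ is a chain map rests on two observations. First, for any generator $[L,H]\in\mathbb B$ the differential $\partial[L,H]$ runs over $v\in H$, and since $v_0\notin H$ every such $v$ satisfies $v\neq v_0$; the subsets $I\subset H$ used to compute $f,g,a_v,b_v$ in $G_{v_0}$ never contain $v_0$, so $f([L,I])$ equals $f([K,I])$ computed in $G$ with $K=L|_G$. Consequently $a_v[L,H]=a_v[K,H]$ and $b_v[L,H]=b_v[K,H]$. Second, both boundary outputs stay in the same $B_i$: the term $[L,H-v]$ trivially so, while for $[L+2v^*,H-v]$ one uses $(L+2v^*)(\Sigma)=L(\Sigma)+2(v\cdot\Sigma)=L(\Sigma)$, since $v\cdot\Sigma=0$ for every $v\in V-\{v_0\}$ by the defining property \eqref{eq:DefSigma} of $\Sigma$. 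The main (indeed, essentially the only) subtlety is this last point: \emph{a priori} the second type of boundary term could leave $B_i$, and what prevents it is precisely the equation $v\cdot\Sigma=0$ for non-distinguished vertices. Once this is in place, $\Psi$ carries each boundary term of $\CFmComb(G)$ term-by-term to the corresponding boundary term in $B$, completing the proof.
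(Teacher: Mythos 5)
Your proposal is correct and is essentially the paper's own argument run in reverse: the paper defines the forgetful map $F'\colon B\to\CFmComb(G)$, $[L,H]\mapsto[L|_G,H]$, and shows each $[K,H]$ has a unique lift into $\bigoplus_{0\le i<1}B_i$ via exactly the parity/shift-by-one observation you use, while your $\Psi$ is just the inverse of that map. Your more explicit verification that $a_v,b_v$ agree and that $(L+2v^*)(\Sigma)=L(\Sigma)$ (so the second boundary term lands on the correct lift) only spells out what the paper summarizes as ``it is easy to see that $F'$ is a chain map.''
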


\begin{proof}
  Consider the map $F'\colon B \to \CFmComb (G)$ 
 induced by the forgetful map $F'$ defined
  as $[L,H] \mapsto [L|_{G},H]$.  It is easy to see that (since $H$ does
not contain $v_0$) the map $F'$ is a chain map. Indeed, $F'$ 
    is an isomorphism: one needs to check only that
  every element $[L|_{G},H]$ admits a unique lift to $[L,H]\in B_i$
  with $0\leq i <1$. The condition $\frac{1}{2}(L(\Sigma ) +\Sigma
  ^2)= \frac{1}{2}L(v_0)+\frac{1}{2}(L|_G)(\Sigma -v_0)
  +\frac{1}{2}\Sigma ^2 \in [0,1)$ uniquely characterizes the value of
  $\frac{1}{2}L(v_0)$ by the fact that $L(v_0)\equiv v_0^2 \mod 2$.
\end{proof}
\begin{rem}
Obviously, the same argument shows that 
$\bigoplus _{r\leq i<r+1}B_i$ is isomorphic to $\CFmComb (G)$.
\end{rem}

The above statement admits a spin$^c$-refined version as follows.
Notice first that if we fix a spin$^c$ structure $\t $ on the
3-manifold $Y_{G_{v_0}}$ we get after the surgery and also fix $i$,
then there is a unique spin$^c$ structure $\s$ on $Y_G$ induced by
$(\t, i)$. Indeed, if the cohomology class $L$ satisfies $\s_L=\t$ and
$\frac{1}{2}(L(\Sigma )+\Sigma ^2)=i$, and $L'$ is another
representative of $\t$, then
\[
L'=L +\sum _{i=0}^n 2n_iv_i^*. 
\]
In order for $L'$ to be also in $B_i$, however, the coefficient $n_0$
of $v_0^*$ in the above sum must be equal to zero, hence $L\vert _G$
and $L'\vert _G$ represent the same spin$^c$ structure on $Y_G$. We
will denote this restriction by $(\t , i) \vert _G$.  Then the above
isomorphism $F'$ provides
\begin{lem}
  Let $B_i(\t )$ be the subcomplex of $B_i$ generated by those pairs for
  which $L$ represents the spin$^c$ structure $\t$. The map $F'$
  provides an isomorphism between $B_i(\t)$ and $\CFmComb (G, (\t , i)\vert
  _G)$.
\end{lem}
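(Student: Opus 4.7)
The plan is to build on Proposition~\ref{prop:izomphi}, which already provides a chain isomorphism $F'\colon B \to \CFmComb(G)$ respecting the decomposition $B = \bigoplus_i B_i$. Since injectivity of $F'$ on $B_i(\t)$ is automatic from that proposition, the content of the lemma is the matching of the two spin$^c$ decompositions, i.e.\ the equality $F'(B_i(\t)) = \CFmComb(G, (\t, i)\vert_G)$.

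The forward inclusion $F'(B_i(\t)) \subseteq \CFmComb(G, (\t, i)\vert_G)$ is essentially tautological. A generator $[L, H] \in B_i(\t)$ satisfies precisely the hypotheses used in the paragraph preceding the lemma to define $(\t, i)\vert_G$: namely $\s_L = \t$ and $\frac{1}{2}(L(\Sigma) + \Sigma^2) = i$. Thus $L\vert_G$ represents $(\t, i)\vert_G$ by construction, and $F'([L, H]) = [L\vert_G, H]$ lies in $\CFmComb(G, (\t, i)\vert_G)$, with no further verification needed.

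The reverse inclusion is where the real work lies. Given a generator $[K, E] \in \CFmComb(G, (\t, i)\vert_G)$, I would invoke the definition of $(\t, i)\vert_G$ to produce a witness $L' \in \Char(G_{v_0})$ with $\s_{L'} = \t$, with $\frac{1}{2}(L'(\Sigma) + \Sigma^2) = i$, and with $L'\vert_G$ representing $\s_K$. Writing the difference as $L'\vert_G - K = 2\sum_{j=1}^n n_j v_j^*$ for some integers $n_j$, I would then lift the correction to an element $\widetilde{w} \in H^2(X_{G_{v_0}}, Y_{G_{v_0}}; \Z)$ by reinterpreting each $v_j^*$ as the Poincar\'e dual taken inside $X_{G_{v_0}}$, and set $L = L' - 2\widetilde{w}$. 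Then $L\vert_G = K$ and $\s_L = \t$ follow directly. The key observation is that $\widetilde{w}(\Sigma) = \sum_{j=1}^n n_j (v_j \cdot \Sigma) = 0$ by the defining property of $\Sigma$ in Equation~\eqref{eq:DefSigma}, so $\frac{1}{2}(L(\Sigma)+\Sigma^2)=i$ still holds; hence $[L, H] \in B_i(\t)$ and maps to $[K, E]$.

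The main obstacle I anticipate is conceptual rather than computational: one must take care that the cohomological correction used to match $L\vert_G$ to $K$ on the nose does not disturb the Alexander level that separates $B_i$ from $B_{i+1}$. The condition $v_j \cdot \Sigma = 0$ for $j \geq 1$ is exactly what makes this possible, and it is the same property that forced the coefficient of $v_0^*$ to vanish in the uniqueness argument for Proposition~\ref{prop:izomphi}, so no new geometric input is required.
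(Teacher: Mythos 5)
Your proposal is correct and follows essentially the same route as the paper: the forward inclusion and injectivity are read off from the preceding discussion and Proposition~\ref{prop:izomphi}, and surjectivity is established by exhibiting the unique lift $L$ of $K$ at level $i$. The only cosmetic difference is that the paper pins down this lift by solving $L(\Sigma)+\Sigma^2=2i$ for $L(v_0)$ and then checking that $\s_L=\t$, whereas you reach the same $L$ by correcting a witness $L'$ for $(\t,i)$ by twice a class pairing trivially with $\Sigma$, which renders the characteristic, level, and spin$^c$ verifications automatic.
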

\begin{proof}
  By the above discussion it is clear that $F'$ maps $B_i(\t )$ to
  $\CFmComb (G, (\t ,i) \vert _G)$. The map is injective, hence to
  show the isomorphism we only need to verify that $F'$ is onto.
  Obviously $L(\Sigma ) +\Sigma ^2=2i$ and $L\vert _G=K$ determines
  $L(v_0)$, and it is not hard to see that for the resulting
  cohomology class $\s _L =\t$.
\end{proof}

In conlcusion, the complexes ${\mathbb {B}}$, $B_i(\t)$ and $B=\oplus
_{i\in [0,1)}B_i$ can be recovered from $\CFmComb (G)$, and hence from 
the master complex.

The complex ${\mathbb {T}}$ also admits a decomposition into $\oplus
_{i\in {\mathbb {Q}}}T_i$ where the generator $[L,H]$ with $v_0\in H$ belongs to
$T_i$ if $\frac{1}{2}(L(\Sigma )+\Sigma ^2) =i\in {\mathbb
  {Q}}$. Notice that the map $\partial _1$ defined in
\eqref{eq:hatarok} maps $T_i$ into $B_i\subset {\mathbb {B}}$, while
when we apply $\partial _2$ to $T_i$, we get a map pointing to
$B_{i+v_0^*(\Sigma)}\subset {\mathbb {B}}$.

Recall that in the definitions of $B_i$ and $T_i$ we used the fixed
framing attached to the vertex $v_0$. In the following we show that
the result will be actually independent of this choice. To formulate
the result, suppose that for the fixed framing $v_0^2n$ the complex
${\mathbb {B}}={\mathbb {B}} (n)$ splits as $\oplus _i B_i (n)$ (and
similarly, ${\mathbb {T}}={\mathbb {T}} (n)$ splits as $\oplus _i T_i
(n)$) .

\begin{lem}
  The chain complexes $B_i (n)$ and $B_{i}(n+1)$ (and similarly $T_i
  (n)$ and $T_{i}(n+1)$) are isomorphic.
\end{lem}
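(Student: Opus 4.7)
The plan is to produce an explicit chain isomorphism $\varphi\colon B_i(n)\to B_i(n+1)$ (and a parallel one for $T_i$) by a small normalizing modification of $L$ on the vertex $v_0$. First I would observe that the coefficients $a_j$ in $\Sigma=v_0+\sum_{j>0}a_j v_j$ are determined by the system $v_j\cdot\Sigma=0$ for $j>0$, and this system involves only the submatrix $M_G$ of $G$ together with the fixed incidences $v_j\cdot v_0$; in particular the $a_j$ are independent of the framing assigned to $v_0$. A one-line calculation then gives $\Sigma^2=v_0\cdot\Sigma=v_0^2+\sum_{j>0}a_j(v_0\cdot v_j)$, so changing the framing from $n$ to $n+1$ increases $\Sigma^2$ by exactly one.

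Next I would define $\varphi$ on generators by $\varphi([L,H])=[L',H]$ with $L'(v_j)=L(v_j)$ for $j>0$ and $L'(v_0)=L(v_0)-1$. Since $L$ is characteristic for $G_n(v_0)$, we have $L(v_0)\equiv n\pmod{2}$, so $L'(v_0)\equiv n+1\pmod{2}$, confirming $L'$ is characteristic for $G_{n+1}(v_0)$. The Alexander-grading check is then:
\[
\tfrac{1}{2}\bigl(L'(\Sigma)+\Sigma^2_{n+1}\bigr)=\tfrac{1}{2}\bigl(L(\Sigma)-1+\Sigma^2_n+1\bigr)=\tfrac{1}{2}\bigl(L(\Sigma)+\Sigma^2_n\bigr)=i,
\]
so $\varphi$ sends $B_i(n)$ to $B_i(n+1)$ and $T_i(n)$ to $T_i(n+1)$, and is a bijection on generators.

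The main step, and the only substantive computation, is that $\varphi$ intertwines the differentials. For this I would show that the weight $f([L,I])$ defined in \eqref{eq:fdef} is invariant under $\varphi$ for every subset $I$ appearing in the boundary formulas. If $v_0\notin I$, neither $L(v_0)$ nor $v_0^2$ enters \eqref{eq:fdef}, so $f([L,I])=f([L',I])$ trivially. If $v_0\in I$ (which can occur for the $T_i$ piece, where $v_0\in H$ and hence for certain $I\subset H$), then $\sum_{v\in I}L(v)$ drops by $1$ while $(\sum_{v\in I}v)^2$ increases by the change in $v_0^2$, namely $+1$; these two contributions to $2f$ cancel exactly. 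Consequently $g$, $A_v$, $B_v$ and the exponents $a_v,b_v$ are all preserved, so $\partial$ on $B_i(n)$ and $\partial_{\mathbb{T}}$ on $T_i(n)$ commute with $\varphi$.

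The main obstacle, such as it is, is the last bookkeeping step — one has to notice that the $+1$ from changing $v_0^2$ and the $-1$ from changing $L(v_0)$ conspire to cancel in $f$, and this is ultimately forced by the normalization $v_j\cdot\Sigma=0$ for $j>0$ that characterizes $\Sigma$. With this observation, the formal construction of $\varphi$ and its inverse (adding $1$ to $L(v_0)$) is immediate, establishing the isomorphism in both the $B_i$ and $T_i$ cases.
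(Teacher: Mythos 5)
Your proposal is correct and is essentially the paper's own proof: the same map sending $[L,H]$ to $[L',H]$ with $L'(v_0)=L(v_0)-1$, the same observation that the $a_j$ (hence $\Sigma$) are unchanged while $\Sigma^2$ increases by one, and the same invariance of $f$ (hence of $g$, $a_v$, $b_v$) giving the chain-map property and invertibility. One tiny quibble: the cancellation in $f$ when $v_0\in I$ is not ``forced by'' the normalization $v_j\cdot\Sigma=0$ --- it is simply the $-1$ in $L(v_0)$ against the $+1$ in $v_0^2$; the $\Sigma$-normalization is only needed for the filtration-index computation.
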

\begin{proof}
  Consider the map $t\colon B_i(n) \to B_{i}(n+1)$ (and similarly
  $t'\colon T_i(n) \to T_{i}(n+1)$) which sends the generator $[L,H]$
  to $[L',H]$ where $L'(v_j)=L(v_j)$ for all $j>0$ and
  $L'(v_0)=L(v_0)-1$.  Notice that by changing the framing on $v_0$
  from $n$ to $n+1$ we increase $\Sigma ^2$ by 1.  Since $L'(\Sigma
  )=L(\Sigma )-1$, and the above map $t$ is invertible, the claim
  follows. Since the function $f$ we used in the definition of the
  boundary map takes the same value for $[L,H]$ as for $[L',H]$, the
  maps $t$ and $t'$ are, indeed, chain maps between the chain complexes.
\end{proof}

Our next goal is to reformulate ${\mathbb {T}}$ (and its splitting as
$\oplus _{i\in {\mathbb {Q}}}T_i$) in terms of the master
complex $\MCFinfComb (\Gamma _{v_0})$.  As before, recall that for a
spin$^c$ structure $\t$ on $Y_{G_{v_0}}$ and $i$ we have a
restricted spin$^c$ structure $\s =(\t, i )\vert _G$ on $Y_G$.
Consider the subcomplex $S_i(\s )=S_i((\t, i )\vert _G) \subset
\CFinfComb (G, \s )$ generated by the elements
\[
\{ U^j\otimes [K,E]\in \CFmComb (G, \s )\mid -j\leq 0, A(U^j\otimes
[K,E])\leq i\}.
\]
\begin{lem}\label{l:izom}
  For a spin$^c$ structure $\t$ the chain complex $T_i(\t )$ and
  the subcomplex $S_i ((\t , i) \vert _G)$ are isomorphic as chain
  complexes.
\end{lem}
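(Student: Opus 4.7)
The plan is to exhibit an explicit isomorphism $\Psi \colon T_i(\t) \to S_i((\t,i)|_G)$ of $\F[U]$-modules and then check that it intertwines the differentials. For a generator $[L,H]$ of $T_i(\t)$, write $K = L|_G$ and $E = H - v_0$, and set
\[
\Psi([L,H]) = U^{a_{v_0}[L,H]} \otimes [K, E].
\]
This is nothing but the restriction of the map $\partial_1$ from \eqref{eq:hatarok} to $T_i(\t)$, post-composed with the forgetful chain isomorphism $F'$ identifying $B_i(\t)$ with $\CFmComb(G, (\t,i)|_G)$ that was just established (the spin$^c$-refinement of Proposition~\ref{prop:izomphi}).

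The heart of the argument is to rewrite $a_{v_0}[L,H]$ in terms of data pulled back to $G$. The condition $[L,H] \in T_i(\t)$ together with Lemma~\ref{lem:letezik} and the definition of the Alexander grading forces $L = L_{[K,E]} + 2n v_0^*$ with $n = i - A([K,E])$. Splitting the minimum defining $g([L,H]) = \min_{I \subset H} f([L,I])$ according to whether $v_0 \in I$, and using $v_0^*(\Sigma) = 1$ together with the explicit value of $L_{[K,E]}(v_0)$ furnished by Lemma~\ref{lem:letezik}, a short calculation yields
\[
g([L,H]) = g([K,E]) + \min(0, n), \qquad a_{v_0}[L,H] = \max(0, A([K,E]) - i).
\]
Consequently $\Psi([L,H])$ has nonnegative $U$-exponent and Alexander grading $\min(A([K,E]), i) \leq i$, so it indeed lands in $S_i((\t,i)|_G)$.

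Bijectivity is then immediate: Lemma~\ref{lem:letezik} and the constraint $\frac{1}{2}(L(\Sigma)+\Sigma^2)=i$ put the $\F[U]$-basis $\{[L,H]\}$ of $T_i(\t)$ in canonical bijection with pairs $(K, E)$ for which $\s_K = (\t,i)|_G$, and $\Psi$ sends each $[L,H]$ to the distinguished $\F[U]$-generator $U^{\max(0, A([K,E])-i)} \otimes [K,E]$ of $S_i((\t,i)|_G)$ corresponding to $(K,E)$. The chain map property of $\Psi$ is then inherited from $\partial_1$ being a chain map (a consequence of $\partial^2 = 0$ on $\CFmComb(G_{v_0})$) combined with $F'$ being a chain isomorphism.

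The main obstacle is the $g$-computation producing the clean formula $a_{v_0}[L,H] = \max(0, A([K,E]) - i)$; once this identity is in hand, compatibility with the Alexander filtration on the target and the chain map property come essentially for free. A direct verification avoiding $\partial_1$ is also possible via analogues of Equations~\eqref{eq:alexkul} and \eqref{eq:alexkulmasodik} tracking how the $\max(0, A - i)$ term shifts under removing a vertex $v \in E$, but the factoring $\Psi = F' \circ \partial_1$ makes the formal check trivial.
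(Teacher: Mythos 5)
Your map is exactly the paper's map $F([L,H])=U^{a_{v_0}[L,H]}\otimes[L\vert_G,H-v_0]$, and your central computation --- that if $L(v_0)=L_{[K,E]}(v_0)+2n$ with $n=i-A([K,E])$, then $g([L,H])=g([K,E])+\min(0,n)$, hence $a_{v_0}[L,H]=\max(0,A([K,E])-i)$ and $b_{v_0}[L,H]=\max(0,i-A([K,E]))$ --- is just the closed-form version of the paper's key identity~\eqref{eq:relab}, which states $i-A(F([L,H]))=b_{v_0}[L,H]$. Where you genuinely repackage the argument is in the bookkeeping: the paper verifies the chain-map property by checking two identities directly (reformulations of \eqref{eq:olto} and \eqref{eq:olto2}), whereas you inherit it from the factorization $\Psi=F'\circ\partial_1\vert_{T_i}$ (with $\partial_1$ a chain map because $\partial^2=0$ and $F'$ a chain isomorphism by Proposition~\ref{prop:izomphi}); and the paper proves surjectivity by a case analysis on whether $a_{v_0}[L,H]$ vanishes, whereas you note that $S_i((\t,i)\vert_G)$ is free over $\F[U]$ on the elements $U^{\max(0,A([K,E])-i)}\otimes[K,E]$ (the exponent is an integer since $A([K,E])\equiv i \bmod 1$) and that $\Psi$ carries the $\F[U]$-basis of $T_i(\t)$ bijectively onto this basis. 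Both shortcuts are legitimate --- indeed the paper itself uses the factorization right after the lemma, in the identity $\eta_1\circ F=F'\circ\partial_1$ --- and your closed formula makes the containment in $S_i$ and the bijectivity transparent. One caution on notation: in the paper $v_0^*$ is the Poincar\'e dual class, so $v_0^*(\Sigma)=v_0\cdot\Sigma=\Sigma^2$, which is not $1$ in general, and ``$L=L_{[K,E]}+2nv_0^*$'' read with that convention would alter $L$ on the neighbours of $v_0$, contradicting $L\vert_G=K$; what your computation actually uses is the class adding $2n$ to the value on $v_0$ only, and with that reading --- or by simply observing $i-A([K,E])=\tfrac{1}{2}(L-L_{[K,E]})(\Sigma)=\tfrac{1}{2}\bigl(L(v_0)-L_{[K,E]}(v_0)\bigr)$, which needs no dual class at all --- the argument is correct.
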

\begin{proof} 
  Define the map $F=F^{\t }_i\colon T_i (\t) \to S_i((\t , i)\vert
  _G)$ on the generator $[L,H]$ by the formula
\[
F([L,H])=U^{a_{v_0} [L,H]}\otimes [L\vert _G,H-v_0].
\]
The exponent of $U$ in this expression is obviously nonnegative and
the spin$^c$ structure of the image is equal to $(\t , i)\vert _G$.
Therefore, in order to show that $F([L,H])\in S_i((\t , i) \vert _G)$, we
need only to verify that
\begin{equation}\label{eq:kisebb}
A(F([L,H])) \leq i=\frac{1}{2}(L(\Sigma )+\Sigma ^2).
\end{equation}
In fact, we claim that 
\begin{equation}\label{eq:relab}
  \frac{1}{2}(L(\Sigma )+\Sigma ^2)-  A(U^{a_{v_0}([L,H])}\otimes 
  [L\vert _G,H-v_0])=  b_{v_0}[L,H].
\end{equation}
By substituting the definitions of the various terms in the left hand
side of this equation (after multiplying it by 2), and applying the
obvious simplifications we get
\[
L(v_0)+2g([L,H-v_0]-2g([L, H]) 
+v_0^2-2g([L\vert _G, H-v_0])+2g([L\vert _G+2v_0^*, H-v_0]).
\]
Since $g([L\vert _G, H-v_0])=g([L,H-v_0])$, this expression is clearly equal
to $2b_{v_0}[L,H]$, concluding the argument. Since $b_{v_0}[L,H]$ is nonnegative,
Equation~\eqref{eq:relab} immediately implies the inequality of~\eqref{eq:kisebb}.

Finally, a simple argument shows that $F$ is a chain map:
The two necessary identities
\[
a_{v_0}[L,H]+a_v[L\vert _G, H-v_0]=a_v[L,H]+a_{v_0}[L,H-v]
\]
and
\[
a_{v_0}[L,H]+b_v[L\vert _G, H-v_0]=b_v[L,H]+a_{v_0}[L+2v^*, H-v]
\]
are reformulations of Equations~\eqref{eq:olto} and~\eqref{eq:olto2}
(together with the observation that $f(L\vert _G,
I)=f(L,I)$ once $v_0\not \in I$).

Next we show that $F$ is an isomorphism. For $[K,E]$ on $G$ there is a
unique extension $[L,H]$ on $G_{v_0}$ with $[L\vert _G, H-v_0]=[K,E]$
and $\frac{1}{2}(L(\Sigma ) +\Sigma ^2)=i$, hence the injectivity of
$F$ easily follows. To show that $F$ is onto, fix an element
$U^j\otimes [K,E]\in S_i ((\t , i)\vert _G )$ and consider $[L,H]\in
T_i (\t )$ with $F([L,H])=U^{a_{v_0}[L,H]}\otimes [K,E]$.  If
$a_{v_0}[L,H]=0$ then $U^j\otimes [L,H]$ maps to $U^j\otimes [K,E]$
under $F$. In case $a_{v_0}[L,H]>0$ then $b_{v_0}[L,H]=0$ and so by
the identity of \eqref{eq:relab} we get that
$A(U^{a_{v_0}[L,H]}\otimes [K,E])=i$.  Therefore $A(U^j\otimes
[K,E])\leq i$ implies that $j\geq a_{v_0}[L,H]$, hence
$U^{j-a_{v_0}[L,H]}\otimes [L,H]$ is in $T_i(\t )$ and maps under $F$
to $U^j \otimes [K,E]$, concluding the proof.
\end{proof}

The subcomplexes of ${\mathbb {T}}$ admit a certain symmetry, induced by 
the $J$-map.
\begin{lem}
The $J$-map induces an isomorphism $J_i$ between the chain
complexes $T_i$ and $T_{-i}$. This isomorphism intertwines the
maps $\partial _1$ and $\partial _2$; more precisely
$\partial _2$ on $T_i$ is equal to $J_i^{-1}\circ \partial _1 \circ J_i$
(and $\partial _1$ on $T_i$ is equal to $J_i^{-1}\circ \partial _2 \circ J_i$). 
\end{lem}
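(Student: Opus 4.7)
The plan is to directly compute the image of a generator under $J$ and verify the two claims by matching the appropriate characteristic classes and exponents of $U$ via the identities already established for the $J$-map (Lemma~\ref{l:spinc}) and for $\Sigma$ (Equation~\eqref{eq:DefSigma}).

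First I would check that $J$ sends $T_i$ into $T_{-i}$. If $[L,H]\in T_i$ (so $v_0\in H$ and $\tfrac12(L(\Sigma)+\Sigma^2)=i$), then $J[L,H]=[L',H]$ with $L'=-L-\sum_{v\in H}2v^*$. Using that $v_j^*(\Sigma)=v_j\cdot\Sigma=0$ for $j>0$ by the defining property of $\Sigma$, while $v_0^*(\Sigma)=v_0\cdot\Sigma=\Sigma^2$ (since $\Sigma=v_0+\sum_{j>0}a_jv_j$ and $v_j\cdot\Sigma=0$ forces $\Sigma\cdot\Sigma=v_0\cdot\Sigma$), I get
\[
L'(\Sigma)=-L(\Sigma)-2\sum_{v\in H}v^*(\Sigma)=-L(\Sigma)-2\Sigma^2,
\]
so $\tfrac12(L'(\Sigma)+\Sigma^2)=-\tfrac12(L(\Sigma)+\Sigma^2)=-i$. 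Since $v_0\in H$ still, $J[L,H]\in T_{-i}$, and $J$ is an involution so $J_i\colon T_i\to T_{-i}$ is an isomorphism of modules. That $J_i$ is a chain map follows at once from Lemma~\ref{l:spinc}, since the subcomplex $\mathbb T\subset\CFmComb(G_{v_0})$ is preserved by $J$ (the condition $v_0\in H$ is preserved).

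Next I would verify $\partial_1\circ J_i=J_i\circ\partial_2$ on $T_i$ by direct computation. Writing out the definitions,
\[
\partial_1\bigl(J[L,H]\bigr)=U^{a_{v_0}[J[L,H]]}\otimes\Bigl[-L-\sum_{v\in H}2v^*,\,H-v_0\Bigr],
\]
while
\[
J\bigl(\partial_2[L,H]\bigr)=U^{b_{v_0}[L,H]}\otimes J\bigl[L+2v_0^*,\,H-v_0\bigr]=U^{b_{v_0}[L,H]}\otimes\Bigl[-L-\sum_{v\in H}2v^*,\,H-v_0\Bigr],
\]
where in the last step I used $-(L+2v_0^*)-\sum_{v\in H-v_0}2v^*=-L-\sum_{v\in H}2v^*$. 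The characteristic classes and subsets agree, so the two sides coincide provided $a_{v_0}[J[L,H]]=b_{v_0}[L,H]$; but this is precisely the identity \eqref{eq:spinek} (with $v=v_0$, $K=-L-\sum_{v\in H}2v^*$, $E=H$) already used in the proof of Lemma~\ref{l:spinc}. The relation $\partial_2\circ J_i=J_i\circ\partial_1$ is verified by the completely analogous calculation, using the other half of \eqref{eq:spinek}.

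I do not expect any serious obstacle here: the content of the lemma is bookkeeping. The only point worth checking carefully is the sign computation $v_0^*(\Sigma)=\Sigma^2$ in the first paragraph, which is what makes the Alexander-type grading flip under $J$; once this is in place, the intertwining of $\partial_1$ and $\partial_2$ is an immediate consequence of the $J$-symmetry between $a_v$ and $b_v$ already recorded in \eqref{eq:spinek}.
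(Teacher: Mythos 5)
Your proposal is correct and follows essentially the same route as the paper: the same computation $(-L-\sum_{v\in H}2v^*)(\Sigma)=-L(\Sigma)-2v_0\cdot\Sigma$ together with $\Sigma^2=v_0\cdot\Sigma$ shows $J$ carries $T_i$ to $T_{-i}$, and the intertwining of $\partial_1$ and $\partial_2$ is reduced to the identities \eqref{eq:spinek} from Lemma~\ref{l:spinc}, which you merely spell out a bit more explicitly than the paper does. The only cosmetic point is that $\mathbb{T}$ is a quotient complex of $\CFmComb(G_{v_0})$ rather than a subcomplex, but since $J$ preserves the condition $v_0\in H$ it descends to $(\mathbb{T},\partial_{\mathbb{T}})$ exactly as you use it.
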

\begin{proof}
  Recall the definition $J[L,H]=[-L-\sum _{v\in H}2v^*, H]$ of the
  $J$-map on the chain complex $\CFmComb (G _{v_0})$. Applying it to the
  complex $T_i$, we claim that we get a chain complex isomorphism
  $J_i\colon T_i\to T_{-i}$: from the fact $(-L-\sum _{v\in H}2v^*)(\Sigma )=
  -L(\Sigma )-2v_0\cdot \Sigma$ (since $v_0\in H$ and for all other
  $v_i$ we have that $v_i\cdot \Sigma =0$) together with the
  observation that $\Sigma ^2 = v_0\cdot \Sigma$, it follows that
\[
\frac{1}{2}((-L-\sum _{v\in H}2v^*)(\Sigma) +\Sigma ^2)=\frac{1}{2}(-L(\Sigma )-
\Sigma ^2 )=-\frac{1}{2}(L(\Sigma ) + \Sigma ^2).
\]
This equation shows that $J_i$ maps $T_i$ to $T_{-i}$. The claim
$\partial _2 = J_i^{-1}\circ \partial _1 \circ J_i$ (where $\partial
_2$ is taken on $T_i$ while $\partial _1$ on $T_{-i}$) then simply
follows from the identities of~\eqref{eq:spinek} in
Lemma~\ref{l:spinc}.
\end{proof}

The same idea as above shows that 
\begin{lem}
The restriction of $J$ to $B_i$ provides an isomorphism 
$B_i\to B_{-i+v_0^*(\Sigma )}$ of chain complexes.
\end{lem}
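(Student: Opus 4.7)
The plan is to adapt the previous lemma's argument with minor modifications, exploiting the defining property of $\Sigma$ and the fact that $v_0 \notin H$ for generators of $B_i$.

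First I would check that $J$ indeed maps $B_i$ into $B_{-i+v_0^*(\Sigma)}$. For a generator $[L,H]$ of $B_i$ we have $J[L,H] = [-L - \sum_{v\in H}2v^*, H]$, and since $v_0\notin H$ every $v\in H$ satisfies $v\cdot \Sigma = 0$, i.e. $v^*(\Sigma)=0$. Thus
\[
\Bigl(-L - \sum_{v\in H}2v^*\Bigr)(\Sigma) = -L(\Sigma),
\]
so
\[
\tfrac{1}{2}\bigl((JL)(\Sigma) + \Sigma^2\bigr) = \tfrac{1}{2}(-L(\Sigma) + \Sigma^2) = -i + \Sigma^2 = -i + v_0^*(\Sigma),
\]
where the last equality uses $v_0\cdot \Sigma = \Sigma^2$, which follows from the characterization of $\Sigma$ in Equation~\eqref{eq:DefSigma} (since $\Sigma = v_0 + \sum_{j\geq 1} a_j v_j$ and $v_j \cdot \Sigma = 0$ for $j>0$).

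Next, $J$ is a chain map on $\CFmComb(G_{v_0})$ by Lemma~\ref{l:spinc}, and the differential on $\mathbb{B}$ is just the restriction of the differential on $\CFmComb(G_{v_0})$ (the boundary terms arising from $v$ with $v\notin H\cup\{v_0\}$ stay inside $\mathbb{B}$ since $H-v$ still omits $v_0$), so $J$ restricts to a chain map on $\mathbb{B}$ that respects the splitting $\mathbb{B}=\oplus_i B_i$. Finally, $J$ is an involution, and applying the displacement formula twice gives $-(-i + v_0^*(\Sigma)) + v_0^*(\Sigma) = i$, so $J\colon B_i \to B_{-i+v_0^*(\Sigma)}$ and $J\colon B_{-i+v_0^*(\Sigma)} \to B_i$ are mutually inverse chain isomorphisms.

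There is no essential obstacle: the argument is entirely parallel to the preceding lemma for $T_i$, and the only new point is the shift by $v_0^*(\Sigma)$, which arises precisely because in the $B_i$ case the vertex $v_0$ is absent from $H$ (so the correction term $-2v_0^*(\Sigma)$ that appeared in the $T_i$ calculation no longer contributes, leaving an overall shift by $\Sigma^2 = v_0^*(\Sigma)$).
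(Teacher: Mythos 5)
Your proof is correct and follows essentially the same route as the paper: the key computation is identical (using $v^*(\Sigma)=0$ for $v\in H$ because $v_0\notin H$, and $\Sigma^2=v_0\cdot\Sigma=v_0^*(\Sigma)$), with your extra remarks on the chain-map property and involutivity just making explicit what the paper leaves implicit by appealing to the preceding lemma.
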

\begin{proof} 
Indeed, if $v_0\not \in H$, then $(-L-\sum _{v\in H}2v^*)(\Sigma )= -L(\Sigma)$, hence
\[
\frac{1}{2}((-L-\sum _{v\in H}2v^*)(\Sigma) +\Sigma ^2)=\frac{1}{2}(-L(\Sigma )+
\Sigma ^2 )=-\frac{1}{2}(L(\Sigma ) + \Sigma ^2)+\Sigma ^2,
\]
and $\Sigma ^2=v_0^*(\Sigma )$.
\end{proof}

Next we identify the two maps $\partial _1$ and $\partial _2$ of the
mapping cone $({\mathbb {T}}, {\mathbb {B}}, \partial _1 + \partial
_2)$ in the filtered lattice chain complex context. Notice that $S_i
(\s )$ is naturally a subcomplex of $\CFmComb (G, \s )$; let the
inclusion $S_i(\s )\subset \CFmComb (G, \s )$ be denoted by $\eta
_1$. It is obvious from the definitions that for the maps $F',F$ of
Proposition~\ref{prop:izomphi} and Lemma~\ref{l:izom}
\[
F'(\partial _1 [L,H])=\eta _1 (F ([L,H])).
\]

The subcomplex $S_i (\s )$ admits a further natural embedding into the
complex $V_i(\s )$ which is generated in $\CFinfComb (G, \s )$ by the
elements $\{ U^j \otimes [K,E]\mid A (U^j \otimes [K,E])\leq i\}$.
($V_i (\s )$ is the subcomplex of $\CFinfComb (G, \s )$ when we regard
this latter as an $\Field [U]$-module.) Recall that $s_{v_0}$ denotes
the spin$^c$ structure we get from $\s$ by twisting it with $v_0^*$.
\begin{prop}
The subcomplex $V_i(\s )$ is isomorphic to $\CFmComb (G, \s_{v_0})$.
\end{prop}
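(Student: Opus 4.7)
The plan is to leverage the chain isomorphism $N\colon\CFinfComb(G,\s)\to\CFinfComb(G,\s_{v_0})$ from the lemma following Definition~\ref{def:mapn}, which on generators is given by $N(U^j\otimes[K,E])=U^{i_\s-A([K,E])+j}\otimes[K+2v_0^*,E]$. The key idea is that $V_i(\s)$ and $\CFmComb(G,\s_{v_0})$ should correspond under $N$ after a suitable $U$-power shift, because both are defined by cutting off $\CFinfComb$ along a half-space, and $N$ carries the Alexander cutoff on one side into the $U$-exponent cutoff on the other.

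First I would track how $N$ acts on $V_i(\s)$. For a generator $U^j\otimes[K,E]\in V_i(\s)$, the defining condition is $A(U^j\otimes[K,E])=-j+A([K,E])\leq i$, and the $U$-exponent appearing in $N(U^j\otimes[K,E])$ equals $j+i_\s-A([K,E])=i_\s-A(U^j\otimes[K,E])$, which is therefore bounded below by $i_\s-i$. Moreover, since $A$ takes values in $i_\s+\Z$ on the subcomplex $\CFinfComb(G,\s)$ (Definition~\ref{def:mod1}), without loss of generality we may take $i\in i_\s+\Z$ (since $V_i(\s)$ only changes when $i$ crosses a value of the form $i_\s+k$). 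Hence $N$ restricts to a chain map from $V_i(\s)$ into the submodule $W\subset\CFinfComb(G,\s_{v_0})$ generated over $\Field[U]$ by those $U^k\otimes[K',E']$ with $k\geq i_\s-i$.

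Next I would verify that this restriction is in fact an isomorphism onto $W$. Injectivity is automatic from the injectivity of $N$ on all of $\CFinfComb(G,\s)$. For surjectivity, given $U^k\otimes[K',E']\in W$, write $K'=K+2v_0^*$ with $\s_K=\s$, and set $j:=k-i_\s+A([K,E])$; the inequality $k\geq i_\s-i$ translates exactly to $A(U^j\otimes[K,E])\leq i$, so $U^j\otimes[K,E]\in V_i(\s)$ and $N$ maps it to $U^k\otimes[K',E']$. Thus $N$ gives a chain isomorphism $V_i(\s)\cong W$.

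Finally, I would identify $W$ with $\CFmComb(G,\s_{v_0})$. Since $i-i_\s$ is a nonnegative integer, multiplication by $U^{i-i_\s}$ is a chain map (the differential commutes with $U$), and gives a bijection between $W$, the submodule with $U$-exponent $\geq i_\s-i$, and $\CFmComb(G,\s_{v_0})$, the submodule with $U$-exponent $\geq 0$. Composing with $N|_{V_i(\s)}$ yields the desired isomorphism $\phi\colon V_i(\s)\to\CFmComb(G,\s_{v_0})$, given concretely by $\phi(U^j\otimes[K,E])=U^{j+i-A([K,E])}\otimes[K+2v_0^*,E]$. The only real work is bookkeeping with the $U$-exponents and the mod-$1$ shift $i_\s$; no genuine obstacle arises beyond this, since the chain map property comes entirely from $N$.
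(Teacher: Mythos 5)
Your argument is correct and is essentially the paper's own proof: the composite $\phi=U^{i-i_{\s}}N$ you build (first apply $N$, then shift by $U^{i-i_{\s}}$) is exactly the map the paper uses, and in both cases the whole content is the $U$-exponent computation $i+j-A([K,E])\geq 0$ if and only if $A(U^j\otimes[K,E])\leq i$. One cosmetic remark: $i-i_{\s}$ need not be nonnegative, only an integer, but this is harmless since multiplication by any integer power of $U$ is a chain isomorphism of $\CFinfComb (G,\s_{v_0})$ carrying the exponent-$\geq i_{\s}-i$ submodule onto the exponent-$\geq 0$ submodule $\CFmComb (G,\s_{v_0})$.
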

\begin{proof}
Consider the map $U^{i-i_{\s}}N$ from Definition~\ref{def:mapn}
mapping from $\CFinfComb (G, \s )$ to $\CFinfComb (G, \s _{v_0})$. It
is easy to see that this map provides an isomorphism between $V_i (\s
)$ and $\CFmComb (G, \s _{v_0})$, since
\[
j(U^{i-i_{\s}}\otimes N(U^k \otimes [K,E]))=i+k -A([K,E])
\]
is nonnegative if and only if $i\geq -k+A([K,E])=A(U^k \otimes [K,E])$.
\end{proof}

Define now $\eta _2\colon S_i (\s )\to \CFmComb (G, \s _{v_0})$ as the 
composition of the embedding $S_i (\s )\to V_i (\s )$ with the map
$U^{i-i_{\s }}N$. With this definition in place the identity
\[
\eta _2 \circ F = F'\circ \partial _2
\]
easily follows: 
\[
(\eta _2 \circ F)[L,H]=U^{a_{v_0}[L,H]+i -A([L\vert _G , H-v_0])}\otimes [L\vert _G +2v_0^*, H-v_0], 
\]
\[
(F'\circ \partial _2 )[L,H]= U^{b_{v_0}[L,H]}[L+2v_0^*\vert _G ,
H-v_0],
\]
and the two right-hand-side terms are equal by the identity of
\eqref{eq:relab}. Now we are in the position to turn to the proof of
the main result of this section, Theorem~\ref{thm:mutet}.

\begin{proof}[Proof of Theorem~\ref{thm:mutet}]
  Fix the framing $n$ of $v_0$ in such a way that $G_{v_0}=G_{n}(v_0)$
  is a negative definite plumbing graph. Fix a spin$^c$ structure $\t$
  on $Y_{G_{v_o}}$. Our goal is now to determine the chain complex
  $\CFmComb (G_{v_0}, \t )$ from the master complex of $\Gamma
  _{v_0}$. As we discussed earlier in this section, it is sufficient
  to recover the subcomplexes $T_i (\t )$, $B_i(\t)$ (for $i\in \{
  q+n\cdot \Sigma ^2 \mid n\in {\mathbb {N}}\}$ for an appropriate
  $q\in {\mathbb {Q}}$) and the maps $\partial _1\colon T_i (\t )\to
  B_i (\t )$ and $\partial _2 \colon T_i (\t ) \to B_{i+v_0^*(\Sigma
    )}(\t )$.

Identify $T_i (\t )$ with the subcomplex $S_i ((\t , i )\vert _G)$ and 
$B_i (\t )$ with $\CFmComb (G, (\t , i )\vert _G )$ (both as 
subcomplexes of $\CFinfComb (G, (\t , i )\vert _G)$) by the maps
$F$ and $F'$. As we showed earlier, the natural embedding of
$S_i ((\t , i )\vert _G)\subset \CFmComb (G, (\t , i)\vert _G)$ can play
the role of $\partial _1$, while the embedding $S_i((\t , i )\vert _G)
\to V_i((\t , i )\vert _G)$ composed with $U^{i-i_{(\t , i )\vert _G}}N$
plays the role of $\partial _2$ in this model. These subcomplexes
and maps are all determined by $\CFinfComb (G)$, the two filtrations
and the map $N$ on it. Since by its definition the master complex
of $\Gamma  _{v_0}$ equals this collection of data, the theorem is
proved.
\end{proof}

\subsection{Computation of the master complex}
When computing the homology $\HFmComb (G_{n}(v_0))$ from $(\oplus S_i,
   \oplus _{k\in \Z} \CFmComb (G), \eta_1 , \eta _2)$ we can first
take the homologies $H_*(S _i)$ and $\HFmComb (G)$ and
consider the maps $H_*(\eta _1)$ and $H_*(\eta _2)$ induced by
$\eta_1, \eta _2$ on these smaller complexes.  This method provides
more manageable chain complexes to work with, but it also loses some
information: the resulting homology will be isomorphic to the homology
of the original mapping cone only as a vector space over $\Field$, and
not necessarily as a module over the ring $\Field [U]$.  Nevertheless,
sometimes this partial information can be applied very conveniently.

\begin{comment}
As an application of the maps $\phi, \psi$ introduced above, it can be
shown that the master complex can be recovered from the chain
complexes $S_i$ and these maps.  In practical situations, however, the
actual chain complexes $S_i$ and the maps on them are rather hard to
work with. Typically we can derive information about their homology,
so it would be desirable to identify the amount of information these
homology groups (together with the induced maps) contain about the
master complex. 
\end{comment}
As an example, we show how to recover (in favorable
situations) the knot lattice homology $\HFKaComb (\Gamma _{v_0} )$
from the homologies of $S_i$. Let us consider the following iterated
mapping cone.  First consider the mapping cones $C_i$ of $ (S_i,
S_{i+1}, \psi _i)$ for $i=n,n-1$, and then consider the mapping cone
$D(n)$ of $(C_n,C_{n-1}, (\phi _{i+1},\phi _i))$.  (For a schematic
picture of the chain complex, see Figure~\ref{fig:mapcone}.)  In the
next lemma we will still need to use the complexes $S_i$ rather than
their homologies.
\begin{figure}[ht]
\begin{center}
\epsfig{file=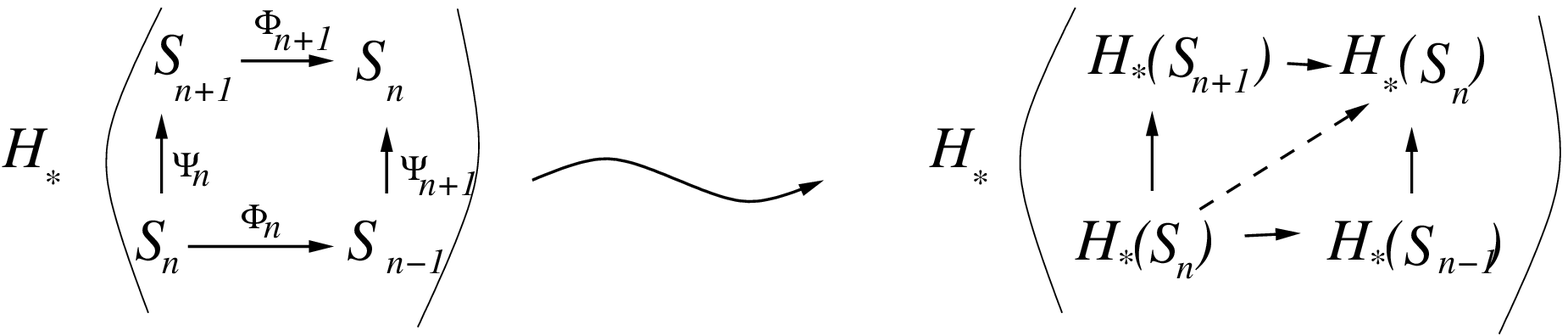, height=2.6cm}
\end{center}
\caption{{\bf The iterated mapping cone $D(n)$ on the $S_i$'s.}  The
  maps are defined as $\phi _i, \psi _i$ with appropriate choices of
  $i$ on the left, and the homomorphisms induced by these maps on the
  right. When taking homologies first, we might need to encounter a
  nontrivial map indicated by the dashed arrow.}
\label{fig:mapcone}
\end{figure}
\begin{lem}\label{l:comput}
  The homology $H_*(D(n))$ is isomorphic to $\HFKaComb (\Gamma _{v_0},
  n)$.
\end{lem}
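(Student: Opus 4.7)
The plan is to identify $D(n)$, up to quasi-isomorphism, with the associated graded piece at Alexander grading $n$ of the filtered chain complex $(\CFaComb(G), \widehat{\partial}, A)$, whose homology is $\HFKaComb(\Gamma_{v_0}, n)$ by definition of knot lattice homology.

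First I would note that $\psi_i \colon S_i \to S_{i+1}$ is simply the inclusion of a subcomplex, so the projection $\pi_i \colon C_i = \mathrm{Cone}(\psi_i) \to S_{i+1}/S_i$ given by $(a,b) \mapsto [b]$ is a surjective chain map whose kernel is the mapping cone of $\mathrm{id}_{S_i}$, and hence acyclic. Thus $C_n$ and $C_{n-1}$ are quasi-isomorphic to $S_{n+1}/S_n$ and $S_n/S_{n-1}$, respectively. A direct computation shows that under these quasi-isomorphisms the chain map $f = (\phi_n, \phi_{n+1}) \colon C_n \to C_{n-1}$ descends to the map $\bar U \colon S_{n+1}/S_n \to S_n/S_{n-1}$, $[b] \mapsto [Ub]$ (multiplication by $U$), since $\phi_{n+1}$ is by definition multiplication by $U$ followed by an inclusion, while the $\phi_n$-component lands in $S_{n-1}$ and so becomes zero after projecting to $S_n/S_{n-1}$. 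The standard invariance of the mapping cone construction under quasi-isomorphisms of the source and target then gives $D(n) \simeq \mathrm{Cone}(\bar U)$.

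The critical step is verifying that $\bar U$ is injective. If $b \in S_{n+1}$ satisfies $Ub \in S_{n-1}$, I would expand $b = \sum c_\alpha \cdot U^{j_\alpha} \otimes [K_\alpha, E_\alpha]$ in the distinguished $\Field$-basis; the elements $U \cdot (U^{j_\alpha} \otimes [K_\alpha, E_\alpha]) = U^{j_\alpha+1} \otimes [K_\alpha, E_\alpha]$ are again distinct basis vectors, each with Alexander grading exactly one less than that of the original. Hence $Ub \in S_{n-1}$ forces $A(U^{j_\alpha} \otimes [K_\alpha, E_\alpha]) \leq n$ for every $\alpha$ with $c_\alpha \neq 0$, yielding $b \in S_n$ and $[b] = 0$ in $S_{n+1}/S_n$.

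Because $\bar U$ is an injective chain map, $\mathrm{Cone}(\bar U)$ is quasi-isomorphic to its cokernel $S_n/(S_{n-1} + U \cdot S_{n+1})$. Using $\CFaComb(G) = \CFmComb(G)/U \cdot \CFmComb(G)$ together with the identity $S_n \cap U \cdot \CFmComb(G) = U \cdot S_{n+1}$, which is an elementary consequence of the relation $A(Ux) = A(x) - 1$, this cokernel is naturally identified with the associated graded piece $\mathcal{F}_n/\mathcal{F}_{n-1}$ of the Alexander filtration on $\CFaComb(G)$, whose homology is $\HFKaComb(\Gamma_{v_0}, n)$. The main technical care required is bookkeeping sign conventions and Maslov grading shifts through the iterated cone, which is routine but easy to slip on; the conceptual heart of the argument is the injectivity of $\bar U$, the mechanism that collapses the four-term iterated cone onto a single associated graded piece.
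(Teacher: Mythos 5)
Your argument is correct and follows essentially the same route as the paper's proof: replacing each cone $C_i$ of the inclusion $\psi_i$ by the quotient strip $S_{i+1}/S_i$, observing that the connecting map becomes multiplication by $U$, and localizing to the piece with $A=n$ and vanishing $U$-power, whose homology is $\HFKaComb(\Gamma_{v_0},n)$ by definition. Your write-up merely makes explicit the steps the paper leaves as a sketch (quasi-isomorphism invariance of cones, injectivity of $\bar U$, and cone of an injection being quasi-isomorphic to its cokernel), all of which check out.
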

\begin{proof}
  Factoring $S_{n+1}$ with the image of $\psi _n\colon
  S_n\to S_{n+1}$ we compute the homology of the horizontal strip
  in the master complex with $A=n+1$ and nonnegative $U$-power (i.e.,
  $j\geq 0$).  Similarly, with the help of $\psi _{n-1}\colon
  S_{n-1}\to S_n$ we get the homology of the horizontal strip with
  $A=n$ and nonnegative $U$-power. The iterated
  mapping cone in the statement maps the upper strip into the lower one by
  multiplying it by $U$, localizing
  the computation to one coordinate with $A=n$ and vanishing
  $U$-power. The homology of this complex is by definition the knot
  lattice homology $\HFKaComb (\Gamma _{v_0},n)$. 
  \end{proof}

Unfortunately, if we first take the homologies of the complexes $S_i$
and then form the mapping cones in the above discussion, we might get
different homology. The reason is that when taking homologies of the
$S_i$ we might need to consider a diagonal map, as indicated by the dashed arrow
of Figure~\ref{fig:mapcone}. Under favorable circumstances, however,
the diagonal map can be determined to be zero, and in those cases
$\HFKaComb (\Gamma _{v_0})$ can be computed from the homologies of
$S_i$ (and the maps induced by $\phi _i, \psi _i$ on these
homologies). From the knowledge of $\HFKaComb (\Gamma _{v_0}, n)$ we
can recover the nontrivial groups in the master complex:
multiplication by $U^n$ simply translates $\HFKaComb (\Gamma _{v_0})$
(located on the $y$-axis) with the vectors $(n,n)$ ($n\in \Z $).  In
some special cases appropriate \emph{ad hoc} arguments help us to
reconstruct the differentials and the map $N$ on the master complex
(which do not follow from the computation of $\HFKaComb (\Gamma
_{v_0})$), getting $\MCFinfComb (\Gamma _{v_0})$ back from $H_* (S_i)$
and the maps $H_*(\Psi _i)$ and $H_*(\Phi _i)$.

\begin{comment}
In some simple cases, in fact, slightly less information is sufficient
for determining the chain complex of $G_{v_0}$. Assume that 
$\HFmComb (G)$ has no nontrivial automorphisms. In this case we only need
to determine 
\begin{itemize}
\item The group $\HFmComb (G)$,
\item the groups $H_*(S_i)$ ($i\in \Z $),
\item and the maps $H_*(\eta _1), H_*(\eta _2 ) \colon H_* (S_i)\to
  \HFmComb (G)$.
\end{itemize}
Indeed, to recover the mapping cone from the data above, we need to
pick identifications of the various $\HFmComb (G)$-components in the
infinite sum ${\mathbb {B}}$. Under the hypothesis that $\HFmComb (G)$
has no nontrivial automorphism, however, the map $N$ is unique, hence the data
listed above are sufficient in this special case. One prominent
example is provided by the case when $Y_G$ is the 3-sphere $S^3$,
hence $\HFmComb (G)$ is isomorphic to $\Field [U]$, a module which
admits no nontrivial automorphism. Examples of such computations are given in 
\cite{OSSzlspace}.
\end{comment}

Remember also that first taking the homology and then the mapping cone
causes some information loss: the result will coincide with the
homology of the mapping cone as a vector space over $\Field$, but not
necessarily as an $\Field [U]$-module. The vector space underlying the
$\Field [U]$-module $\HFmComb$ is already an interesting invariant of
the graph. The module structure can be reconstructed by considering
the mapping cones with coefficient rings $\Field [U]/(U^n)$ for every
$n\in {\mathbb {N}}$, cf. \cite[Lemma~4.12]{OSSZlatt}.

\section{An example: the right-handed trefoil knot}
\label{sec:trefoil}
In this section we give an explicit computation of the filtered
lattice chain complex (introduced in Section~\ref{sec:knots}) for the
right-handed trefoil knot in $S^3$. It is a standard fact that this
knot can be given by the plumbing diagram $\Gamma _{v_0}$ of
Figure~\ref{fig:trefoilplumb}. Notice that in this example the
background manifold is diffeomorphic to $S^3$, hence admits a unique
spin$^c$ structure, and therefore we do not need to record it.
(Related explicit computations can be found in \cite{s3csomok}.)
 
\begin{figure}[ht]
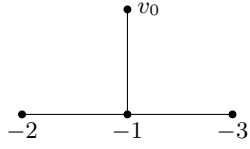

\begin{center}
\setlength{\unitlength}{1mm} \unitlength=0.7cm
\begin{graph}(5,2)(-2,0)
\graphnodesize{0.15}

\roundnode{n1}(-1,0)
\roundnode{n2}(1,0)
\roundnode{n3}(3,0)
\roundnode{n4}(1,2)

\edge{n1}{n2}
\edge{n2}{n3}
\edge{n2}{n4}

\autonodetext{n1}[s]{\small $-2$} 
\autonodetext{n2}[s]{\small $-1$} 
\autonodetext{n3}[s]{\small $-3$}
\autonodetext{n4}[e]{\small $v_0$}

\end{graph}
\end{center}
\caption{{\bf The plumbing tree $\Gamma _{v_0}$ describing the
    right-handed trefoil knot in $S^3$.}  Interpreting the graph as a
  plumbing tree, the repeated blow-down of the $(-1)$-, $(-2)$- and
  $(-3)$-framed vertices turn the circle corresponding to $v_0$ into
  the right-handed trefoil knot.}
\label{fig:trefoilplumb}
\end{figure}
Using the results of \cite{nemethi-ar, lattice} first we will
determine $H_*(T_i)$ and $H_*(B)$ when the framing $v_0^2=-7$ is fixed
on $v_0$.  
\begin{prop}
  Suppose that $\Gamma _{v_0}$ is given by the diagram of
  Figure~\ref{fig:trefoilplumb}. Then $H_*(B)\cong \Field [U]$.
\end{prop}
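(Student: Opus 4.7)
The plan is to reduce the computation of $H_*(B)$ to the lattice homology of the background graph $G=\Gamma_{v_0}-v_0$, and then recognize that graph as a plumbing for $S^3$. Deleting $v_0$ from Figure~\ref{fig:trefoilplumb} leaves the linear chain $G$ with framings $(-2),(-1),(-3)$. By Proposition~\ref{prop:izomphi}, the forgetful map $F'\colon B\to \CFmComb(G)$ is an isomorphism of chain complexes, so
\[
H_*(B)\;\cong\;\HFmComb(G),
\]
and the task reduces to showing $\HFmComb(G)\cong \Field[U]$.

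For this, the intersection matrix of $G$ has determinant $\pm 1$, so $Y_G$ is an integer homology sphere; successively blowing down the central $(-1)$-framed vertex and the resulting $(-1)$-framed vertex reduces $G$ to a single $(-1)$-vertex, and identifies $Y_G$ with $S^3$. Appealing to the invariance of lattice homology under diffeomorphism of the underlying 3-manifold (Theorem~\ref{thm:indep}, due to N\'emethi~\cite{lattice}) together with the computation in Example~\ref{ex:minusz1}, one concludes $\HFmComb(G)\cong\Field[U]$.

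The main obstacle is this second step, since it invokes Theorem~\ref{thm:indep}, which is re-proved only later in Section~\ref{sec:app}. A self-contained alternative is to build an explicit chain-homotopy equivalence $\CFmComb(G)\simeq \Field[U]$ in the spirit of Example~\ref{ex:minusz1} and Remark~\ref{rem:gk}: single out a characteristic vector $K_0$ minimizing $-K^2$ on $G$ (generating the $\Field[U]$-tower provided by the structure theorem, Theorem~\ref{thm:structure}), and for each other generator $[K,E]$ assemble a null-homotopy from the boundary relations, inducting on a complexity such as $-K^2+|E|$. For this three-vertex graph the minimal-weight function $g([K,E])$ is an explicit piecewise-quadratic on the characteristic lattice $\Char(G)\subset \Z^3$, so the induction is tractable; the crux is verifying that the reduced part $\HFmComb_{red}(G)$ vanishes, which amounts to the rationality of $G$ in N\'emethi's sense and can be checked directly on such a small graph.
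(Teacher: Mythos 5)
Your first step coincides with the paper's: both identify $H_*(B)$ with $\HFmComb(G)$ for the chain $G$ with framings $(-2),(-1),(-3)$ via the isomorphism $F'$ of Proposition~\ref{prop:izomphi}. Where you diverge is in computing $\HFmComb(G)$. The paper observes that $G$ is negative definite with exactly one bad vertex (the central $(-1)$), so the results of \cite{OSzplum} and \cite{nemethi-ar, lattice} identify its lattice homology with $\HFm(Y_G)=\HFm(S^3)\cong\Field[U]$. You instead blow $G$ down to the single $(-1)$-vertex graph and invoke invariance of lattice homology under such moves (Theorem~\ref{thm:indep}) together with Example~\ref{ex:minusz1}. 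This route is logically sound provided you cite N\'emethi's original proof of invariance (\cite[Proposition~3.4.2]{lattice}, or \cite{latticetriangle}) rather than the re-proof in Section~\ref{sec:app} of this paper: the appendix argument comes later and leans on master-complex computations from Sections~\ref{sec:knots}--\ref{sec:trefoil}, so quoting it here would be awkward, whereas the external citation removes any circularity; your blow-down sequence itself is correct. What each approach buys: the paper's argument stays entirely within results already available at that point and requires no moves on the graph, while yours avoids the one-bad-vertex comparison with Heegaard Floer theory and uses only intrinsic lattice-homology invariance plus the one-vertex model computation. Your proposed ``self-contained alternative'' (an explicit contraction built by induction on $-K^2+|E|$, reducing to vanishing of $\HFmComb_{red}(G)$) is only a sketch --- no homotopy is actually exhibited and the rationality check is not carried out --- so it should not be counted as a proof; but the main route, with the external citation, is complete.
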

\begin{proof}
  The graph $G=\Gamma _{v_0}-v_0$ is negative definite with one bad
  vertex, hence the result of \cite{lattice} (cf. also
  \cite{nemethi-ar}) applies and shows that the lattice homology of it
  is isomorphic to the Heegaard Floer homology of the 3-manifold $Y_G$
  defined by the plumbing. Since $G$ presents $S^3$ as a 3-manifold
  and $H_*(B)\cong \HFmComb (G)$, the claim follows.
\end{proof}
Consequently the lattice homology group $\HFmComb (G) =\HFmComb
_0(G)\cong H_*(B)$ is generated by a single element, and it has to be
a linear combination of elements of the form $[K,E]$ with
$E=\emptyset$ (since the entire homology of a negative definite graph
with at most one bad vertex is supported in this level). The generator
has Maslov grading 0, which by the definition of the grading means
that $\frac{1}{4}(K^2+3)=0$, i.e. $K^2=-3$. There are exactly $8$ such
cohomology classes on $G$, and it is easy to verify that these
are all homologous to each other (when thought of as cycles in lattice
homology), so any one of them can represent the generator of $\HFmComb
(G)=\Field [U]$. By denoting the vertex of $G$ with framing $-i$ by
$v_i$ ($i=1,2,3$), we define the vector $K$  as
\begin{equation}\label{eq:defk}
(K(v_1), K(v_2), K(v_3))=(-1,0,1) .
\end{equation}
Simple calculation shows that $K^2=-3$, hence $[K, \emptyset ]$
generates $\HFmComb (G)$. We will need one further computational
fact for the group $\HFmComb (G)$:
\begin{lem}\label{l:kisszamolas}
The element $[K', \emptyset ]\in \CFmComb (G)$
given by $(K'(v_1), K'(v_2), K'(v_3))=(1,0,1) $
is homologous to $U\otimes [K, \emptyset ]$, where 
$K$ is given by \eqref{eq:defk} above.
\end{lem}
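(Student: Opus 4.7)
The plan is to avoid constructing an explicit null-homology and instead argue by dimension counting in $\HFmComb_{-2}(G)$ together with the $U=1$ theory.

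First, one checks that $K' = (1,0,1)$ is characteristic (parity check against $v_1^2=-1$, $v_2^2=-2$, $v_3^2=-3$) and that $[K',\emptyset]$ is a cycle (the defining sums in $\partial$ are over $v\in\emptyset$). A direct calculation with the intersection matrix gives $(K')^2=-11$, so by the grading formula the Maslov grading of $[K',\emptyset]$ equals $\frac{1}{4}(-11+3)=-2$, matching the Maslov grading of $U\otimes[K,\emptyset]$.

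Next, since $G$ is negative definite with $Y_G=S^3$ and at most one bad vertex, Theorem~\ref{thm:structure} gives $\HFmComb(G)\cong \Field[U]$ with $\Field[U]$-generator in Maslov grading $0$; this generator is represented by $[K,\emptyset]$, as established just above the lemma. Consequently $\HFmComb_{-2}(G)\cong\Field$, whose unique nonzero element is $U\otimes[K,\emptyset]$. It therefore suffices to show $[K',\emptyset]$ is nonzero in $\HFmComb(G)$.

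For this, consider the chain map $\CFmComb(G)\to\CFoverComb(G)$ given by setting $U=1$; it carries cycles to cycles and boundaries to boundaries. By Lemma~\ref{lem:UequalsOne}, $\HFoverComb(G)\cong\Field$ is concentrated in $\delta$-degree $0$, and the $CW$-interpretation used in its proof shows that every generator of the form $[L,\emptyset]$ represents the nonzero class (these are the $0$-cells of a $CW$-decomposition of $\R^3$, all lying in its unique path component). Hence $[K',\emptyset]$ is nonzero in $\HFoverComb(G)$; if it were a boundary in $\CFmComb(G)$, its image would be a boundary in $\CFoverComb(G)$, contradicting this. So $[K',\emptyset]\neq 0$ in $\HFmComb(G)$, and being a homogeneous element of grading $-2$ it must equal $U\otimes[K,\emptyset]$. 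The only mildly subtle point is ensuring that the passage $U\mapsto 1$ does not lose the information we need, which is automatic because the reduction is a genuine chain map; no explicit null-homology needs to be exhibited.
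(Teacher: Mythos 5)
Your argument is correct, and it takes a genuinely different route from the paper. The paper's proof is an explicit chain-level computation: it exhibits the element $x=[(1,0,1),\{v_1\}]+[(-1,2,3),\{v_3\}]+[(1,2,-3),\{v_1\}]+[(-1,4,-1),\{v_2\}]$ and checks directly that $\partial x=[(1,0,1),\emptyset]+U\otimes[(1,0,-1),\emptyset]$, then invokes the fact (from the discussion preceding the lemma) that $[(1,0,-1),\emptyset]$ and $[K,\emptyset]$ both represent the generator of $\HFmComb(G)\cong\Field[U]$. You instead detect the class abstractly: the grading computation $(K')^2=-11$, $\grad[K',\emptyset]=-2$, the identification of $\HFmComb(G)$ with $\Field[U]$ generated by $[K,\emptyset]$ in Maslov grading $0$ (so the grading $-2$ piece is one-dimensional, spanned by $U\otimes[K,\emptyset]$), and nonvanishing of $[K',\emptyset]$ via the reduction $\CFmComb(G)\to\CFoverComb(G)$ together with the CW-model of Lemma~\ref{lem:UequalsOne}, where any $0$-cell $[L,\emptyset]$ represents the generator of $H_0(\R^3;\Field)$ by connectivity. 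Each step checks out (in particular $K'$ is characteristic, the $U=1$ reduction is a chain map, and homogeneity of the cycle places its class in the grading $-2$ summand), so no explicit null-homology is needed; your argument even shows more generally that any cycle $[L,\emptyset]$ of Maslov grading $-2k$ in this complex is homologous to $U^k\otimes[K,\emptyset]$. What the paper's computation buys is independence from the grading bookkeeping and an explicit bounding chain; what yours buys is brevity and robustness, at the cost of leaning a bit more heavily on the structural inputs established just above the lemma ($\HFmComb(G)\cong\Field[U]$ with generator $[K,\emptyset]$ in grading $0$), which ultimately come from the comparison with $\HFm(S^3)$. One cosmetic caveat: the notation $\HFmComb_{-2}(G)$ clashes with the paper's convention that subscripts denote the $\delta$-grading, so you should say ``the Maslov grading $-2$ summand'' in words, as you in fact do.
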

\begin{proof}
Consider the element
\[
x=[(1,0,1), \{ v_1\} ]+[(-1,2,3), \{v_3\} ]+[(1,2,-3), \{ v_1\} ]+
[(-1,4,-1), \{v_2\} ] .
\]
It is an easy computation to show that 
$\partial x = [(1,0,1), \emptyset ]+U\otimes [(1,0,-1), \emptyset]$.
Since  both $[K, \emptyset ]$ and $[(1,0,-1), \emptyset ]$
generate $\HFmComb (G)$, the proof is
complete. 
\end{proof}

Before calculating $H_* (T_i)$, we determine the maps $H_*(\partial
_1), H_*(\partial _2 )\colon H_*(T_i) \to H_*(B)$ on certain elements.  To
this end, for $j\in \Z$ consider the elements $L_j\in H^2(X_{G_{v_0}};
\Z )$ (with framing $v_0^2=-7$ attached to $v_0$) defined as
\[
(L_j(v_1), L_j(v_2), L_j(v_3), L_j(v_0))=( -1,0,1,2j+1).
\]
Since $\Sigma =v_0+6v_1+3v_2+2v_3$, by the choice $v_0^2=-7$ we get
$\Sigma ^2=-1$. This implies that $\frac{1}{2}(L_j(\Sigma ) +\Sigma
^2)=j-2$, hence the element $[L_j, \{ v_0\}]$ is in $T_{j-2}$. Simple
calculation shows that
  \[  a_{v_0}[L_j, \{ v_0\} ]
  = \left\{\begin{array}{ll}
      0 & {\text{if $j-3\geq 0$}} \\
      -(j-3)  & {\text{if $j-3<0$}}.
      \end{array}
    \right.\]
\[  b_{v_0}[L_j, \{ v_0\} ]
  = \left\{\begin{array}{ll}
      j-3 & {\text{if $j-3\geq 0$}} \\
      0  & {\text{if $j-3<0$}}.
      \end{array}
    \right.\]

With notations $a_j=a_{v_0}([L_j, \{ v_0\}])$  and $b_j=b_{v_0}([L_j, \{ v_0\}])$
we conclude that (with the conventions for $K$ and $K'$ above, and with the
identification of $B$ with $\CFmComb (G)$)
\[
\partial _1 [L_j, \{ v_0\}]=U^{a_j} \otimes K \quad {\mbox {and}}\quad
\partial _2 [L_j, \{ v_0\}]=U^{b_j} \otimes K',
\]
and the latter element (according to Lemma~\ref{l:kisszamolas}) is
homologous to $U^{b_j+1}\otimes K$.  This shows that for $j\geq 3$ the
homology class of $H_*(T_{j-2})$ represented by the element $[L_j, \{
v_0 \}]$ maps under $(\partial _1,
\partial _2)$ to $((-1,0,1), U^{j-2}\otimes (-1,0,1))\in \HFmComb (G)\times
\HFmComb (G)$.
Applying the $J$-symmetry we can then determine the
$(\partial_1, \partial _2)$-image of $J[L_j, \{ v_0\}]\in T_{2-j}$
($j\geq 3$) as well.  (Notice that although $J[L_j, \{ v_0\}]$ and
$[L_{-j+4}, \{ v_0\}]$ are both elements of $T_{-(j-2)}$, they are not
necessarily homologous.) For $j=2$ the class $[L_2,\{ v_0\}]\in T_0$
maps to $(U\otimes (-1,0,1), U\otimes (-1,0,1))$. Now we are in the
position to determine the homologies $H_*(T_i)$, as well as the maps
on them. Notice first that since $G$ represents $S^3$, the Alexander
gradings are all integer valued, hence we have a nontrivial complex
$T_i$ for each $i\in \Z$.

\begin{prop}
The homology $H_*(T_i)$ is isomorphic to $\Field [U]$.
\end{prop}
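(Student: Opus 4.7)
The plan is to use Lemma~\ref{l:izom} to replace the computation of $H_*(T_i)$ with the computation of $H_*(S_i)$, where $S_i \subset \CFinfComb(G)$ is the subcomplex generated by those $U^j\otimes[K,E]$ with $j\geq 0$ and Alexander grading at most $i$. Since $Y_G \cong S^3$, we have just shown $\HFmComb(G) \cong \Field[U]$, and we exploit this throughout.

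First, for $i$ sufficiently large (concretely, larger than the maximal Alexander grading appearing in a finite chain level set of $\CFmComb(G)$ that still carries all of the homology), the subcomplex $S_i$ contains the full chain-level representative of the generator, so the inclusion $S_i \hookrightarrow \CFmComb(G)$ induces an isomorphism on homology, giving $H_*(S_i)\cong \Field[U]$. Symmetrically, for $i$ very negative we appeal to the $J$-symmetry $J_i \colon T_i \to T_{-i}$ established in the preceding lemma; once the large positive case is settled, $J$-symmetry transports the conclusion to large negative $i$. The remaining range of $i$ is finite, and for each such $i$ the explicit computation above produces candidate generators: for $i\geq 1$ the class $[L_{i+2},\{v_0\}]\in T_i$ has nonzero $(\partial_1,\partial_2)$-image, so it is a nontrivial cycle after passing to $S_i$ via $F$; the class $J_i[L_{i+2},\{v_0\}]$ supplies the generator for $i\leq -1$, and for $i=0$ we use $[L_2,\{v_0\}]$, whose image $(\partial_1,\partial_2)[L_2,\{v_0\}]=(U\otimes K,\, U\otimes K)$ is also nonzero.

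The main obstacle will be ruling out extra torsion, i.e.\ showing $H_*(S_i)\cong \Field[U]$ rather than $\Field[U]\oplus (\text{torsion})$. Here one argues as in Theorem~\ref{thm:structure}: the multiplication-by-$U$ map $U\colon S_{i+1} \to S_i$ fits the subcomplexes into an inverse system whose colimit (after inverting $U$) is the Alexander-filtered piece of $\HFinfComb(G)\cong \Field[U,U^{-1}]$, which is the single free $\Field[U,U^{-1}]$-summand generated in Maslov grading $0$. Combined with the existence of the explicit nontorsion generator produced above, this forces the $U$-torsion in $H_*(S_i)$ to vanish, yielding $H_*(S_i)\cong \Field[U]$ and hence $H_*(T_i)\cong \Field[U]$ via Lemma~\ref{l:izom}.
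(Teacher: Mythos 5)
Your proposal has a genuine gap, and it sits exactly where the real work of the proposition lies: bounding the rank and ruling out $U$-torsion in $H_*(T_i)$. The paper's mechanism for this is the vanishing of $\HFmComb_1$ of the surgered graphs: since $\partial_1,\partial_2$ land in $H_*(B)\cong\Field[U]$, which is torsion-free, any $U$-torsion class (and, after a $J$-symmetric pairing of $T_i$ with $T_{-i}$ for a suitable framing, any excess free rank) lies in the kernel of $\partial_1\oplus\partial_2$ and therefore survives in the mapping cone as a nonzero class in $\HFmComb_1(G_n(v_0))$, supported in $|E|=1$; this contradicts the fact that the surgered graphs are negative definite with at most one bad vertex, so their lattice homology is concentrated in $\delta$-grading $0$ (\cite{OSzplum,lattice}). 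At $i=0$ an additional argument is needed (the $U=1$ homology of Lemma~\ref{lem:UequalsOne}, which equals $\Field$ for the unique spin$^c$ structure on $Y_{G_{-7}(v_0)}$) to exclude the possibility of two generators killed by different maps. None of this external input appears in your proposal, and your substitute for it does not work: inverting $U$ annihilates torsion tautologically, so knowing that the localized (direct-limit) object is $\Field[\uuinv]$ of rank one can never ``force the $U$-torsion in $H_*(S_i)$ to vanish,'' nor does it pin down the free rank of an individual $H_*(S_i)$, since classes may merge or die under the comparison maps before the limit.

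Two further steps are also unjustified as stated. First, for large $i$ you claim the inclusion $S_i\hookrightarrow\CFmComb(G)$ is a quasi-isomorphism because $S_i$ contains a chain-level representative of the generator; that gives at most surjectivity onto $\Field[U]$, not injectivity, and note that $S_i$ is a \emph{proper} subcomplex for every $i$ (the Alexander gradings of the lattice generators $[K,E]$ are unbounded above, since there are infinitely many characteristic vectors), so some argument is genuinely required. Second, in the intermediate range, exhibiting the cycles $[L_{i+2},\{v_0\}]$ with nonzero $(\partial_1,\partial_2)$-image shows only that $H_*(T_i)$ contains a non-torsion element; it gives no upper bound on the rank. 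So the skeleton (identify $T_i$ with $S_i$, use $J$-symmetry, exhibit explicit generators) matches the setting, but the decisive inputs -- the vanishing of $\HFmComb_1$ for the negative definite one-bad-vertex surgeries and the $U=1$ computation at $i=0$ -- are missing and are not recoverable from the localization argument you propose.
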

\begin{proof}
  Notice first that $H_*(T_i)$ cannot have any nontrivial $U$-torsion:
  since $\partial _1, \partial _2$ map to $H_*(B)=\Field [U]$, such
  part of the homology stays in the kernel of $\partial _1$ and
  $\partial _2$, hence would give nontrivial homology in $\HFmComb
  _1(G_{v_0})$ (supported in $\vert E \vert =1$). This, however,
  contradicts the fact that for negative definite graphs with at most
  one bad vertex we have that $\HFmComb _1 (G_{v_0})=0$ \cite{OSzplum,
    lattice}.  If $i> 0$ and $H_*(T_i)$ is not cyclic, then (by the
  $J$-symmetry) the same applies to $H_*(T_{-i})$. Consider the
  surgery coefficient $n$ with the property that $\partial _2$ on
  $T_i$ and $\partial _1$ on $T_{-i}$ point to the same $B$. Then
  $H_*(T_i)\oplus H_*(T_{-i})\to H_*(B)\oplus H_*(B)\oplus H_*(B)$
  will have nontrivial kernel, once again producing nontrivial
  elements in $\HFmComb _1(G_{n}(v_0))$, a group which vanishes for any
  (negative enough) surgery on $v_0$. For the same reason, $H_*(T_0)$
  can have at most two generators, and if it has two generators, then
  the two maps $\partial _1$ and $\partial _2$ have different elements
  in their kernel.  Suppose that $H_*(T_0)$ is not cyclic.  In this
  case (for the choice $v_0^2=-7$) the $U=1$ homology can be easily
  computed and shown to be zero, contradicting the fact that in the
  single spin$^c$ structure on $Y_{G_{-7}(v_0)}$ this homology
  is equal to $\Field$.  This last argument then implies that $H_*
  (T_0)=\Field [U]$ and concludes the proof of the proposition.
\end{proof}

Now our earlier computations of the maps show that for $i>0$ the map
$\partial _1$ maps $[L_{i+2}, \{ v_0\}]\in T_{i}$ into the generator
of $\HFmComb (G)$, hence $[L_{i+2}, \{ v_0\}]$ generates $H_*(T_i)$.
Furthermore, this reasoning shows that $\partial _1$ is an isomorphism
and the map $\partial _2\colon H_* (T_i)\to \HFmComb (G)$ is
multiplication by $U^i$.  By the $J$-symmetry this computation also
determines the maps $\partial _1, \partial _2$ on all $H_*(T_i)$ with
$i\neq 0$.  On $T_0$ the situation is slightly more complicated: both
maps $\partial _1, \partial _2$ take $[L_2, \{ v_0\}]$ to $U$-times
the generator of $\HFmComb (G)$.  This can happen in two ways. Either
$[L_2, \{ v_0\}]$ generates $H_*(T_0)$ (and the maps $\partial
_1, \partial _2$ are both multiplications by $U$), or the cycle
$[L_2, \{ v_0\}]$ is homologous to one of the form $U\otimes g$, where $g$ an be 
represented by a sum of generators (of the form $[L',\{v_0\}]$), each of Maslov grading two greater
than the Maslov grading of $[L_2,\{v_0\}]$. Thus, our aim is to show that
there are no generators in the requisite Maslov grading.

Specifically, we have that
\[ {\grad}[L_2,\{v_0\}]=-1;\]
while
\[ \grad[K,\{v_0\}]=2g[K,\{v_0\}]+1+\frac{1}{4}(K^2+4);\]
which in turn can be $1$ only if $K^2=-4$ and $g[K,\{v_0\}]=0$;
$K^2=-4$
implies that $K(v_0)\leq 5$, while $g[K,\{v_0\}]=0$
implies that $K( v_0)\geq 7$, a contradiction.

We have thus identified the mapping cone $(\oplus _i H_*(T_i), \oplus _{k\in
  \Z}H_*(B), H_*(\partial _1 +\partial _2))$.  For a schematic picture
of the maps, see Figure~\ref{fig:eredmeny}.
\begin{figure}[ht]
\begin{center}
\epsfig{file=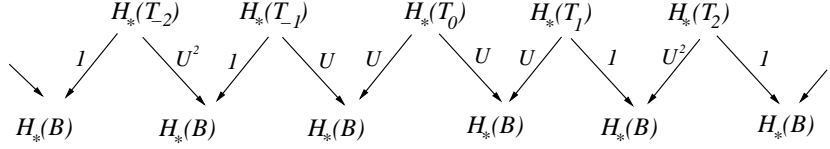, height=2cm}
\end{center}
\caption{{\bf The schematic diagram of the homology groups of
    $H_*(T_i)$, of $H_*(B)$ and the maps between them.} All homologies
  are isomorphic to $\Field [U]$, and the maps are all multiplication
  by some power of $U$ (as indicated in the diagram). The sequence of
  homologies continue in both directions to $\pm \infty$.}
\label{fig:eredmeny}
\end{figure}

We are now ready to describe the master complex of
$\Gamma _{v_0}$. We start by determining the groups on the line
$j=0$ --- equivalently, we compute $\HFKaComb (\Gamma _{v_0})$.  For
this computation, the formula of Lemma~\ref{l:comput} turns out to be
rather useful. Indeed, since $H_*(T_i)=\Field [U]$, there is no
diagonal map in the mapping cone of Figure~\ref{fig:mapcone}. 

The map $H_*(\Psi _i)\colon H_*(T_i)\to H_*(T_{i+1})$ can be
determined from the fact that composing it with the map
$H_*(T_{i+1})\to H_*(B)$ we get $H_*(T_i)\to H_*(B)$. Since $\partial
_1 \colon H_*(T_i)\to H_*(B)$ is an isomorphism for $i\geq 1$, so are
all the maps $H_*(\Psi _i)$. Using the same principle for $i=0$ (and
noticing that $H_*(T_0)\to H_*(B)$ is multiplication by $U$) we get
that $H_*(\Psi _0)$ is also multiplication by $U$. Repeating the same
argument it follows that $H_*(\Psi _{-1})$ is an isomorphism, while
$H_*(\Psi _i)$ is multiplication by $U$ for all $i\leq -2$. The
iterated mapping cone construction of Lemma~\ref{l:comput} shows that
the group $\HFKaComb (\Gamma _{v_0}, n)$ vanishes if the two maps
$H_*(\Psi _{n})$ and $H_*(\Psi _{n-1})$ are the same, and the group
$\HFKaComb (\Gamma _{v_0}, n)$ is isomorphic to $\Field$ is the two
maps above differ. (For similar computations see \cite{OSSzlspace}.)
The computation of the maps $H_*(\Psi _i)$ above shows that
\begin{lem}
For $\Gamma _{v_0}$ given by Figure~\ref{fig:trefoilplumb}
the knot lattice group 
$\HFKaComb (\Gamma _{v_0}, n)$ is isomorphic to $\Field$ for $n=-1,0,1$
and vanishes otherwise. \qed
\end{lem}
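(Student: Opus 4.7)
The plan is to apply Lemma~\ref{l:comput} and the iterated mapping cone description in Figure~\ref{fig:mapcone} together with the computation of the maps $H_*(\Psi_i)$ already obtained in the preceding paragraphs. Since each $H_*(T_i)\cong \Field[U]$ and $H_*(B)\cong \Field[U]$, the dashed diagonal differential in Figure~\ref{fig:mapcone} must vanish for degree-and-rank reasons, so the iterated mapping cone $D(n)$ is genuinely an iterated mapping cone built solely from $H_*(\Psi_n)$ and $H_*(\Psi_{n-1})$.

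Given this, $H_*(D(n))$ is the homology of the two-term mapping cone of the difference $H_*(\Psi_n)-H_*(\Psi_{n-1})\colon \Field[U]\to\Field[U]$ (one factor of $\Field[U]$ sitting above the other, connected vertically by $U$), which is $0$ when the two maps agree and $\Field$ (concentrated in a single bigrading) when they differ by a single factor of $U$. This is the content of the sentence immediately before the lemma, and it is the only fact we need to carry over.

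It remains only to read off the answer case by case using the catalogued values: $H_*(\Psi_i)$ is an isomorphism for $i\geq 1$ and for $i=-1$, and is multiplication by $U$ for $i=0$ and for $i\leq -2$. Thus $H_*(\Psi_n)$ and $H_*(\Psi_{n-1})$ coincide when either $n\geq 2$ (both isomorphisms) or $n\leq -2$ (both multiplication by $U$), while they differ exactly when $n\in\{-1,0,1\}$: indeed, $(\Psi_1,\Psi_0)=(\mathrm{iso},U)$, $(\Psi_0,\Psi_{-1})=(U,\mathrm{iso})$, and $(\Psi_{-1},\Psi_{-2})=(\mathrm{iso},U)$.

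I do not foresee any serious obstacle: all substantive input (the identification of $H_*(T_i)$ and $H_*(B)$ with $\Field[U]$, the explicit form of the maps $\partial_1,\partial_2$, and the derivation of $H_*(\Psi_i)$ from the commuting triangles with $\partial_1$) has already been carried out above. The only step that deserves a sentence of care is verifying that the potential diagonal correction vanishes, which follows from the fact that in each of the relevant bigradings the mapping cone complex has at most one $\Field$-summand, leaving no room for a nontrivial diagonal component.
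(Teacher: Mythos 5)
Your proposal is correct and follows essentially the same route as the paper: the lemma is proved there by exactly this combination of Lemma~\ref{l:comput}, the vanishing of the diagonal correction because $H_*(T_i)\cong\Field[U]$ is free, and the dichotomy that $\HFKaComb(\Gamma_{v_0},n)$ is $\Field$ or $0$ according to whether $H_*(\Psi_n)$ and $H_*(\Psi_{n-1})$ differ, with the same catalogue of the maps $H_*(\Psi_i)$. Your case-by-case read-off for $n\in\{-1,0,1\}$ versus $|n|\geq 2$ matches the paper's conclusion.
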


Indeed, with the convention used in Equation~\ref{eq:defk}, the group
$\HFKaComb (\Gamma _{v_0}, 1)$ 
can be represented by
\[
x_1=[(-1,0,1), \emptyset], 
\]
while the group 
$\HFKaComb (\Gamma _{v_0}, -1)$ 
by 
\[
x_{-1}=[(-1,0,-1), \emptyset]. 
\]
It is straightforward to determine the Alexander gradings
of these elements, and requires only a little more work to show that these two 
generators are not boundaries of elements of the same Alexander grading.
A quick computation gives that the Maslov grading of $x_1$ is 0,
while the Maslov grading of $x_{-1}$ is $-2$. Since the homology
of the elements with $j=0$ gives $\Field$ in Maslov grading 0 (as the $\HFaComb$-invariant of 
$S^3$), we conclude that the generator $x_0$ of the group 
$\HFKaComb (\Gamma _{v_0}, 0)=\Field $ must be of Maslov grading $-1$.
Furthermore, $x_{-1}$ is one of the components of $\partial x_0$.

Similarly, since the homology along the line $A=0$ is also $\Field$ (supported in 
Maslov grading 0), it is generated by $U^{-1}\otimes x_{-1}$ and therefore there is a nontrivial
map from $x_0$ to $U\otimes x_1$. 
Furthermore, this picture is 
translated by multiplications by all powers of $U$, providing 
nontrivial maps on the master complex. There is no more nontrivial map
by simple Maslov grading argument.  
The filtered chain  complex $\CFinfComb (\Gamma _{v_0})$ is
then described by Figure~\ref{fig:master}. (By convention, a solid dot
symbolizes $\Field$, while an arrow stands for a nontrivial map
between the two 1-dimensional vector spaces.)
\begin{figure}[ht]
\begin{center}
\epsfig{file=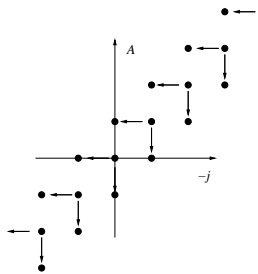, height=3.5cm}
\end{center}
\caption{{\bf The schematic diagram of the master complex
$\MCFinfComb (\Gamma _{v_0})$.}  As usual, nontrivial groups 
are denoted by dots, while nontrivial maps between them are
symbolized by arrows.}
\label{fig:master}
\end{figure}
Furthermore, as the map $N$ is $U$-equivariant, it is equal to the identity.  
Comparing this result with \cite{OSzint} we get that

\begin{prop}\label{prop:mastcomp}
  The master complex of $\Gamma _{v_0}$ determined above is filtered
  chain homotopic to the master complex of the right-handed trefoil
  knot in Heegaard Floer homology (as it is given in
  \cite{OSzknot}). Consequently the filtered lattice chain complex of
  the right-handed trefoil (given by Figure~\ref{fig:trefoilplumb}) is
  filtered chain homotopy equivalent to the filtered knot Floer chain
  complex of the same knot. \qed
\end{prop}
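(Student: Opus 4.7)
The plan is to verify Proposition~\ref{prop:mastcomp} by explicit comparison of the master complex computed above with the known description of $CFK^\infty$ of the right-handed trefoil in $S^3$, as presented in \cite{OSzknot}. Since both objects are described concretely in terms of a short list of generators together with differentials and the involution, this is essentially a matching exercise, not a deep construction.

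First I would recall the model for the knot Floer complex of the right-handed trefoil. Over $\Field[\uuinv]$ it is freely generated by three generators $y_1, y_0, y_{-1}$ with bi-filtration levels (i.e.\ $(j,A)$-coordinates) at $(0,1), (0,0), (-1,0)$ respectively, with Maslov gradings $0, -1, -2$, and with two nontrivial differentials $y_0 \mapsto y_1$ and $y_0 \mapsto y_{-1}$; the $J$-involution (the analogue of $N$ in the Heegaard Floer setting) exchanges the two legs of the staircase and is $U$-equivariant. The non-torsion part of the associated graded computes $\HFa(S^3) = \Field$ in Maslov grading $0$.

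Next I would read off the corresponding data for the master complex $\MCFinfComb(\Gamma_{v_0})$ that has just been assembled. The three generators $x_1, x_0, x_{-1}$ of $\HFKaComb(\Gamma_{v_0},n)$ for $n=1,0,-1$ were shown to sit in Maslov gradings $0, -1, -2$; the bi-filtration levels of $x_{\pm 1}$ follow from their explicit descriptions as classes of $[(-1,0,\pm 1),\emptyset]$ and the definition of the Alexander grading, and the level of $x_0$ is pinned down by the fact that $x_{-1}$ and $U\otimes x_1$ are both components of its boundary in Figure~\ref{fig:master}. This matches the positions $(0,1), (0,0), (-1,0)$ of $y_1, y_0, y_{-1}$ up to the standard translation by the $U$-action. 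The map $N$, being $U$-equivariant and (by the single-$\spinc$-structure computation) the identity on the generators, agrees with the involution in the Heegaard Floer staircase.

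The key step is to promote this generator-by-generator matching to an honest equivalence of filtered chain complexes together with their $N$-maps. For this I would write down the obvious bijection of $\Field[\uuinv]$-bases, check that it respects both the $-j$ and the $A$ filtrations on the nose (using the Maslov-grading argument already used to rule out extra terms in $\partial x_0$), and check that it intertwines the differentials and the involution $N$; the remaining triviality of higher differentials follows once again from the Maslov-grading constraint that no further generator lives in the requisite degree. The only nontrivial point to watch is absolute grading conventions: one must verify that the overall Maslov and Alexander grading normalizations agree with those of \cite{OSzknot}, which is an elementary check against the known $\tau$-invariant and Alexander polynomial of the right-handed trefoil. The second sentence of the proposition is then immediate, since the master complex was defined so that its equivalence class determines the filtered chain homotopy type of $(\CFinfComb(G), \partial, A)$.
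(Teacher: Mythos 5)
Your overall strategy is exactly the paper's: Proposition~\ref{prop:mastcomp} is justified there by nothing more than the explicit model of $\MCFinfComb(\Gamma_{v_0})$ assembled in Section~\ref{sec:trefoil} (Figure~\ref{fig:master}, with $N=\mathrm{id}$ by $U$-equivariance), compared directly with the knot Floer complex of the right-handed trefoil from \cite{OSzknot, OSzint}. So the only real mathematical content of the step is getting the target model right, and that is where your write-up goes wrong. Over $\Field[\uuinv]$ the trefoil complex is freely generated by $y_1, y_0, y_{-1}$ sitting, in the $(-j,A)$-plane, at $(0,1)$, $(0,0)$ and $(0,-1)$, with Maslov gradings $0,-1,-2$, and the only nontrivial differential is $\partial y_0 = y_{-1} + U\otimes y_1$, whose two components occupy the positions $(0,-1)$ and $(-1,0)$. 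Your placement of the third generator at $(-1,0)$ puts it on the $U$-orbit of $y_1$: the $U$-action translates along the diagonals of slope one, and $(0,-1)$ and $(-1,0)$ lie on \emph{different} diagonals, so your claim that the positions agree with those of $x_1,x_0,x_{-1}$ ``up to the standard translation by the $U$-action'' is false. Likewise the arrow $y_0\mapsto y_1$ you list raises both the Alexander filtration and the Maslov grading (from $-1$ to $0$), so it cannot be a component of a filtered differential; the correct component is $y_0\mapsto U\otimes y_1$. As stated, your model has no generator on the Alexander-degree-$(-1)$ diagonal at all, so its associated graded fails to reproduce the symmetric $\HFKa$ of the trefoil, and the generator-by-generator matching you propose cannot be carried out against it.

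Once the model is corrected, the remainder of your plan — matching the three generators, noting that $x_{-1}$ and $U\otimes x_1$ are the components of $\partial x_0$ and that Maslov-grading constraints exclude any further differentials, observing that $N$ is the identity since $Y_G=S^3$ has a unique spin$^c$ structure and $N$ is $U$-equivariant, and checking the absolute grading normalizations — is precisely the comparison the paper performs, and the second sentence of the proposition then follows, as you say, from the definition of the master complex.
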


\begin{rems}\label{rem:masszam}
\begin{itemize}  
\item
Essentially the same argument extends to the family of graphs $\{
  \Gamma _{v_0}(n) \mid n\in {\mathbb {N}}\}$ we get by modifying the
  graph $\Gamma _{v_0}$ of Figure~\ref{fig:trefoilplumb} by attaching
  a string of $(n-1)$ vertices, each with framing $(-2)$ to the
  $(-3)$-framed vertex of $\Gamma _{v_0}$.  The resulting knot can be
  easily shown to be the $(2,2n+1)$ torus knot. A straightforward adaptation of
  the argument above provides an identifications of the filtered chain
  homotopy types of the master complexes (in lattice homology) of
  these knots with the master complexes in knot Floer homology.

\item An even simpler computation along the same lines provides the
  master complex of the graph $\Gamma ^k_{v_0}$ introduced in
  Remark~\ref{rem:gkknot}: the complex is isomorphic to $\Field
  [\uuinv ]$, and the Alexander grading of $U^j$ is simply $j$. (This
  computation should not be suprising at all: the knots given by
  $\Gamma ^k_{v_0}$ are all unknots in $S^3$.) In the computation of
  the master complex $\MCFinfComb (\Gamma^k _{v_0})$ of $\Gamma
  ^k_{v_0}$ the graphs after surgery on $v_0$ are all linear, with the
  single $(-1)$ as bad vertex, and so the lattice homologies
  $\HFinfComb (\Gamma ^k_{v_0})$ are isomorphic to $\Field [\uuinv ]$
  (in the unique spin$^c$ structure).  Obviously, since the background
  3-manifold in all the above examples is $S^3$, the map $N$ on
  $\CFinfComb (G)$ must be the identity.
\end{itemize}
\end{rems}

As an application, consider the connected sum of $n$ trefoil
knots. (For a plumbing diagram, see Figure~\ref{fig:connsum}.)

\begin{proof}[Proof of Theorem~\ref{thm:pelda}]
  According to Proposition~\ref{prop:mastcomp},
  together with
  the connected sum formula for lattice homology and the K\"unneth
  formula for knot Floer homology, we get that the two filtered chain
  complexes for $v_0$ in Figure~\ref{fig:connsum} (the filtered
  lattice chain complex and the knot Floer chain complex) are filtered
  chain homotopic to each other. (See Figure~\ref{fig:nketto} for the
  master complex we get in the $n=2$ case.)
\begin{figure}[ht]
\begin{center}
\epsfig{file=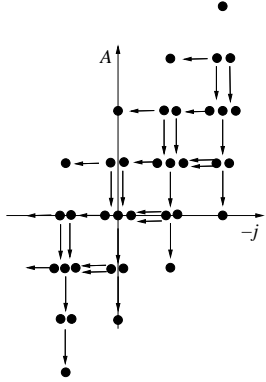, height=5cm}
\end{center}
\caption{{\bf The  master complex for the knot $T\# T$ (where $T$ is the
right-handed trefoil knot).}}
\label{fig:nketto}
\end{figure}
Equip the vertex $v_0$ of Figure~\ref{fig:connsum} with framing
$m_0\leq -6n-1$. Then the corresponding 3-manifold is
$(m_0+6n)$-surgery on the $n$-fold connected sum of trefoil knots in
$S^3$. Since the master complex determines the chain complex of the
surgery in the same manner in the two theories, the lattice homology
of this graph is isomorphic to the Heegaard Floer homology of the
corresponding 3-manifold.
\end{proof}
\begin{rem}
  Notice that this graph has exactly $n$ bad vertices, therefore the
  above result provides further evidence to the conjectured
  isomorphism of lattice and Heegaard Floer homologies. (For related
  results also see \cite{s3csomok}.)
More generally, the identification of the master complexes of 
knots in $S^3$ (in fact in any $Y_G$ which is an $L$-space) is given in 
\cite{OSSzlspace}.
\end{rem}

\section{Appendix: The proof of invariance}
\label{sec:app}
Using the filtered chain complexes for various graphs, in this
Appendix we will give a proof of the result of N\'emethi quoted in
Theorem~\ref{thm:indep}.
According to a classical result of Neumann~\cite{neumann}, the two
3-manifolds $Y_{G_1}$ and $Y_{G_2}$ 
(associated to negative definite plumbing forests $G_1, G_2$)
are diffeomorphic if and only if
the plumbing forests $G_1$ and $G_2$ can be connected by a finite
sequence of blow-ups and blow-downs. Note that since $G_1,G_2$ are
trees/forests, there are three types of blow-ups:
\begin{itemize}
\item we can take the disjoint union of our graph with the graph with
  a single $(-1)$-framed vertex $e$, with no edges emanating from it;
\item we can blow up a vertex $v$, introducing a new leaf $e$ of the graph
  with framing $(-1)$ connected only to $v$, while dropping the framing of  $v$ by one, or
\item we can blow-up an edge connecting vertices $v_1, v_2$, where the
  new vertex $e$ will have valency two and framing $(-1)$, while the
  framings of $v_1,v_2$  will drop by one. Also, $v_1$ and $v_2$ 
are no longer connected, but both are connected to $e$.
\end{itemize}
Correspondingly, we can blow down only those vertices, which have
framing $(-1)$ and valency at most two, providing the three cases
above (when the valency is zero, one or two).

The first case (of a disjoint vertex with framing $(-1)$) has been
considered in Corollary~\ref{c:kulonegy}, and was shown not to change
the lattice homology.  Next we turn to the invariance under the
blow-up of a vertex. Suppose now that $G$ is a given graph with vertex
$v_0$.  Construct $G'$ by adding a new vertex $e$ with framing $(-1)$
to $G$, connect $e$ to the vertex $v_0\in \Vertices (G)$ and change
the framing of $v_0$ from $m_0$ to $m_0-1$.

\begin{thm}[\cite{lattice}]\label{thm:leaflefuj}
  The lattice homologies $\HFmComb (G)$ and $\HFmComb (G')$ of the
  graphs $G$ and $G'$ are isomorphic. Consequently lattice homology is
  invariant under blowing up a vertex.
\end{thm}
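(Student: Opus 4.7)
The plan is to realize both $\HFmComb(G)$ and $\HFmComb(G')$ as surgery outputs of a single master complex by combining Corollary~\ref{c:egyenlok} with Theorem~\ref{thm:mutet}. First I would introduce the auxiliary tree $\Gamma_{v_0}$ obtained from $G$ by erasing the framing on $v_0$, so that $G$ is recovered by attaching framing $m_0$ to $v_0$. The tree obtained from $G'$ by erasing the framing on $v_0$ is precisely $\Gamma^+_{v_0}$ in the notation preceding Corollary~\ref{c:egyenlok} (the new leaf $e$ with framing $(-1)$ is connected to $v_0$ by an edge), and attaching framing $m_0-1$ to $v_0$ in $\Gamma^+_{v_0}$ recovers $G'$. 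Thus both graphs are surgeries on a common unframed vertex $v_0$ in related trees.

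Next I would invoke Corollary~\ref{c:egyenlok} to conclude that $\MCFinfComb(\Gamma^+_{v_0})$ and $\MCFinfComb(\Gamma_{v_0})$ are equivalent as doubly filtered chain complexes equipped with the map $N$. In parallel, I would verify that the surgery datum $\Sigma^2$ agrees in the two setups. Solving \eqref{eq:DefSigma} with the new vertex $e$ present gives $\Sigma_{G^+} = \Sigma_G + e$ (the coefficients $a_v$ for $v\in G-v_0$ being unaffected, since $e$ is disconnected from them in $G^+-v_0$). Using $v_0\cdot e=1$, $e^2=-1$, and the fact that $v_0^2$ has dropped by one, a direct computation yields
\[
\Sigma_{G^+}^2 = (m_0-1) + 2(v_0\cdot e) + e^2 + \kappa = m_0 + \kappa = \Sigma_G^2,
\]
where $\kappa$ depends only on $G-v_0$. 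So the two setups deliver the same $\Sigma^2$.

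Finally I would appeal to Theorem~\ref{thm:mutet}: both $\HFmComb(G)$ and $\HFmComb(G')$ are produced as the mapping cone $\bigl(\bigoplus T_i, \bigoplus B_i, \partial_1+\partial_2\bigr)$ of Section~\ref{sec:surgery}, and a glance at that construction shows the only inputs used are the equivalence class of the master complex (supplying $\CFinfComb$, the Alexander filtration, and the map $N$) together with the rational number $\Sigma^2$ (which indexes the splitting $\bigoplus B_i$ and controls the shift $v_0^*(\Sigma)=\Sigma^2$ between $\partial_1$ and $\partial_2$). Since both the master complex and $\Sigma^2$ coincide, the two mapping cones are isomorphic, producing $\HFmComb(G)\cong\HFmComb(G')$.

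The main obstacle is the bookkeeping in the last step: one must check that the filtered chain homotopy equivalence between $\CFinfComb(G-v_0)$ and $\CFinfComb((G-v_0)\sqcup\{e\})$ given by Example~\ref{ex:minusz1} (which powers Corollary~\ref{c:egyenlok}) induces compatible identifications of the $B_i$, $T_i$, the inclusions $\partial_1$, and the $N$-twisted maps $\partial_2$ on both sides. Once this compatibility is confirmed chain-level, the isomorphism of mapping cones, and hence of lattice homologies, follows.
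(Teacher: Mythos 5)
Your proposal is correct and follows essentially the same route as the paper: identify the master complexes via Corollary~\ref{c:egyenlok}, then feed them into the surgery mapping cone of Section~\ref{sec:surgery} with the framing on $v_0$ shifted by one, the key bookkeeping being that the shift datum $v_0^*(\Sigma)=\Sigma^2$ agrees on the two sides --- which is exactly the paper's observation that $v_0^*(\Sigma')=v_0^*(\Sigma'')+1$ compensates the framing drop. The only cosmetic difference is that you compare $\Gamma^+_{v_0}$ directly with $\Gamma_{v_0}$ and verify $\Sigma_{G'}^2=\Sigma_G^2$ by hand, whereas the paper routes the comparison through the disjoint-union graph $\Gamma^d_{v_0}$ and removes the extra $(-1)$-vertex at the end using Corollary~\ref{c:kulonegy}.
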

\begin{proof}
  We start by constructing auxiliary graphs for the proof.  Let the
  graph $G''$ be defined by simply adding a new vertex $e$ to $G$ with
  framing $(-1)$, without adding any new edge (or change the framing
  of $v_0$). By dropping the framing of $v_0$ from $G,G', G''$ we get
  the three graphs $\Gamma _{v_0}, \Gamma _{v_0}'$ and $\Gamma
  _{v_0}''$.  Obviously, $\Gamma _{v_0}'$ is the connected sum of
  $\Gamma _{v_0}$ with the first graph of Example~\ref{ex:trivik},
  while $\Gamma _{v_0}''$ is the connected sum of $\Gamma _{v_0}$ with
  the second graph in Example~\ref{ex:trivik}.  This fact implies then
  that $\Gamma ' _{v_0}$ can be identified with the graph $\Gamma
  _{v_0}^+$ and $\Gamma '' _{v_0}$ with $\Gamma _{v_0}^d$ of
  Corollary~\ref{c:egyenlok}. According to the corollary, therefore
  the master complexes of $\Gamma _{v_0}'$ and of $\Gamma _{v_0} ''$
  coincide.  This means that we can easily relate the surgeries on
  $v_0$ in $\Gamma _{v_0}'$ and in $\Gamma _{v_0} ''$. In fact, all
  the complexes $T_i$ and $B$ appearing in the corresponding mapping
  cones are identical, but there is a difference between the maps
  $\partial _2$ on $T_i$. To see the difference, notice that if
  $\Sigma '$ (and $\Sigma ''$) denotes the homology class of $\Gamma
  _{v_0}'$ (and $\Gamma _{v_0}''$, resp.) we fixed by
  Equation~\ref{eq:DefSigma} to define the Alexander filtration, then
  the new vertex $e$ is with multiplicity 0 in $\Sigma ''$ (since its
  multiplicity is simply $-e\cdot \Sigma ''$), while it is with
  multiplicity 1 in $\Sigma '$.  Therefore $v_0^*(\Sigma
  ')=v_0^*(\Sigma '')+1$, hence the map $\partial _2''$ for $\Gamma
  _{v_0}''$ from $T_i$ points to the same $B_j$ as the map $\partial
  _2 '$ for $\Gamma _{v_0}'$ if the framing fixed on $v_0$ in $\Gamma
  _{v_0}'$ is one less than the framing of $v_0$ in $\Gamma _{v_0}''$.
  Consequently, for framing $m_0-1$ on $v_0\in \Gamma _{v_0}'$ and
  $m_0$ on $v_0\in \Gamma _{v_0}''$ the two mapping cones coincide,
  providing an isomorphism of the corresponding homologies. Now after
  performing the surgery on $v_0$ in $\Gamma _{v_0}''$, by
  Corollary~\ref{c:kulonegy} we can simply remove the disjoint vertex
  $e$, concluding the proof of the theorem.
\end{proof}

\begin{rems}\label{rem:tobbblow}
\begin{itemize}
\item A simple adaptation of the above argument shows that if we blow
  up a vertex once, and then blow up the new edge, the lattice
  homology remains unchanged. Indeed, the argument proceeds along the
  same line, with the modification that instead of taking the
  connected sum of $\Gamma _{v_0}$ with the first graph of
  Example~\ref{ex:trivik}, we use $\Gamma ^1_{v_0}$ of
  Remark~\ref{rem:gkknot}. (The computation of the master complex of
  this graph is outlined in Remark~\ref{rem:masszam}.) When applying
  the surgery formula, we need to keep track of the homology class
  $\Sigma$ used in the definition of the Alexander filtration exactly
  as it is discussed above.

\item
  Notice that by the repeated application of the above procedure, we can
  turn the vertex $v$ into a good vertex without chaning the lattice
  homology of the graph (on the price of introducing many
  $(-1)$-framed vertices with valency two):
  each time we apply the double blow-up on $v$ we increase its valency by 
  one, while decrease its framing by two. In fact, by considering
  $\Gamma ^k _{v_0}$ of Remark~\ref{rem:gkknot} for $k\geq 0$, the
  same argument shows that the repeated blow-up of the edge connecting
  $v$ and the $(-1)$-framed new vertex does not change the lattice
  homology. Nevertheless, the value of the framing $m_v$ of $v$ drops
  by $k$ while the valency $d_v$ increases by 1, hence for $k$ large
  enough the vertex $v$ will become a good vertex (while the $(-1)$-framed vertex
  next to it will be a bad vertex). We will apply this trick in
  our forthcoming arguments.
\end{itemize}
\end{rems}

The verification of the fact that the blow-up of an edges does not
change lattice homolgy requires a much longer preparation.  The idea
of the proof is that we consider one end of the edge we are about to
blow up, drop its framing and try to compare its filtered lattice
chain complex before and after the blow-up. The graph (with this
distinguished vertex) is the connected sum of two of its subgraphs,
one of which is not affected by the blow-up, while the other changes
by blowing up the edge connecting the distiguished vertex to the rest
of the graph. In order to show that the master complexes of the graphs
before and after this blow-up are filtered chain homotopic, we will
reprove Theorem~\ref{thm:leaflefuj} by describing an explicit chain
homotopy equivalence of the background lattice homologies, which
(after suitable adjustments) will indeed respect the Alexander
filtrations.

We start with the definition of a contraction map in a general
situation, and we will turn to the description of the chain
homotopy equivalences and the filtrations after that. Consider therefore a
plumbing graph $G$ and a vertex $v$.  Let the framing $v^2=m_v$ be
denoted by $-k$.  In the next theorem we will assume that the vertex
$v$ is \emph{good}, that is, its framing $m_v$ and its valency $d_v$
satisfy $d_v+m_v\leq 0$. We will use this condition through the
following result:
\begin{lem}\label{lem:ehatok}
Suppose that $v$ is a good vertex of $G$ with $v^2=-k$. Then for any generator
$[K,E]$ with $v\in E$ we have that 
\begin{itemize}
\item $a_v[K,E]=0$ once $K(v)\geq -v^2=k$ and
\item $b_v[K,E]=0$ once $K(v)\leq v^2=-k$.
\end{itemize}
\end{lem}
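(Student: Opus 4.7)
The plan is to unwind the definitions and reduce each claim to a short sign inequality on the weight function $f$. Recall that $a_v[K,E]=0$ is equivalent to $g([K,E-v])=g([K,E])$, which (since $E-v\subset E$) is equivalent to the $\min$ defining $g([K,E])$ being attained by some $I$ that does not contain $v$. Dually, $b_v[K,E]=0$ is equivalent to the $\min$ being attained by some $I$ that does contain $v$. So for each claim I would fix an arbitrary subset $I\subset E$ witnessing the opposite situation and show that toggling $v$ in or out of $I$ weakly decreases $f$.

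The whole computation rests on one identity, obtained immediately from \eqref{eq:fdef}: for any $J\subset E$ with $v\notin J$,
\[
2f([K,J\cup v])-2f([K,J]) \;=\; K(v)+v^2+2v\cdot\Bigl(\sum_{u\in J}u\Bigr).
\]
In a plumbing tree one has $v\cdot u\in\{0,1\}$ for $u\ne v$, so $v\cdot\sum_{u\in J}u$ is a nonnegative integer bounded above by the valency $d_v$.

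For the first bullet, given $I\subset E$ with $v\in I$, I would apply the identity with $J=I-v$: under the hypothesis $K(v)\geq -v^2=k$ the term $K(v)+v^2$ is $\geq 0$, and the intersection sum is $\geq 0$, so $f([K,I-v])\leq f([K,I])$. This shows the $\min$ is witnessed without $v$, hence $a_v[K,E]=0$. For the second bullet, given $I\subset E$ with $v\notin I$, I would apply the identity directly: $K(v)+v^2\leq 2v^2=-2k$ by hypothesis, while $2v\cdot\sum_{u\in I}u\leq 2d_v\leq -2v^2=2k$ by the goodness assumption $d_v+v^2\leq 0$. Summing gives $\leq 0$, so $f([K,I\cup v])\leq f([K,I])$, proving $b_v[K,E]=0$.

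There is no real obstacle; the only subtlety is that the goodness hypothesis $d_v+m_v\leq 0$ is used only in the $b_v$ case (to bound the intersection contribution against $-2v^2$), whereas the $a_v$ case uses only the nonnegativity of pairwise intersection numbers in a tree. Once the identity above is written down, both bullets follow by a two-line sign check, and no appeal to the alternative formula of Remark~\ref{rem:altfor} is needed.
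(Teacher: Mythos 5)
Your proof is correct and follows essentially the same route as the paper: the paper also rewrites $B_v[K,E]$ as $\min\{f(K,I)+\tfrac{1}{2}(v^2+K(v)+2\deg_I v)\mid I\subset E-v\}$, which is exactly your toggle identity, and then performs the same two sign checks, using $\deg_I v\geq 0$ for the $a_v$ case and goodness ($2v^2+2\deg_E v=2(m_v+d_v)\leq 0$) only for the $b_v$ case. Your observation about where the goodness hypothesis enters matches the paper's argument precisely.
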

\begin{proof}
  Recall that $A_v[K,E]=\min \{ f(K,I)\mid I \subset E-v\}$ while
  $B_v[K,E]= \min \{ f(K,J)\mid v\in J \subset E\}=\min \{
  f(K,I)+\frac{1}{2}(v^2+K(v)+2 {\rm {deg}}_Iv)\mid I\subset E-v\}$,
  where ${\rm {deg}}_Iv$ denotes the number of vertices in $I$
  connected to $v$. Since ${\rm {deg}}_Iv\geq 0$ for all $I$, if
  $v^2+K(v)\geq 0$ then $A_v\leq B_v$, and hence $a_v[K,E]=0$.  If
  $K(v)\leq v^2$, then $v^2+K(v)+2{\rm {deg}}_I v\leq 2v^2+2{\rm
    {deg}}_Ev= 2(m_v+d_v)$. Since $v$ is a good vertex, this
  expression is nonpositive, implying that $A_v\geq B_v$, which then
  means that $b_v[K,E]=0$.
\end{proof}
\begin{rem}
  For the classes $[K,E]$ with $K(v)\in (-k, k)$ the question of which
  of $a_v$ and $b_v$ is zero, is much more complicated.  For example,
  if $G$ is a tree on 3 vertices $\{ v, v_1, v_2\}$, with the two
  leaves $\{ v_1, v_2\}$ of framing $(-2)$ and the third vertex $v$ of
  framing $(-4)$ and $K$ is $0$ on the leaves and 2 on $v$, then
  $b_v[K, v]=0$ while $b_v[K, \{ v, v_1, v_2\} ]=1$.
\end{rem}

Suppose now that $[K,E]$ is given, and assume  that the good
vertex $v$ is not in $E$.  The above lemma implies that for the
unique value $i_0=i_{K,v}$ with the property that $(K+2i_0v^*)(v)\in
[-k, k)$, we have that $a_v[K+2iv^*, E\cup v]=0$ once $i>i_0$ and
  $b_v[K+2iv^*, E\cup v]=0$ once $i<i_0$. Also, one of $a_v[K+2i_0v^*,
    E\cup v]$ and $b_v[K+2i_0v^*, E\cup v]$ is equal to zero.
\begin{defn} The generator $[K,E]$ is of type-$a$ if $a_v[K+2i_0v^*,
  E\cup v]=0$ and of type-$b$ if $a_v[K+2i_0v^*, E\cup v]> 0$. Let
  $T=T_{[K,E]}$ be equal to 1 if $[K,E]$ is of type-$b$ and $-1$ if it
  is of type-$a$.
\end{defn} 
Consider the map $H_0\colon \CFmComb (G)\to \CFmComb (G)$ defined as
\[
H_0[K,E]=
\left\{
\begin{array}{ll}
  0 & {\text{if $v\in E$ or $(T-2)k\leq K(v)<Tk$}} \\
  \\
{[K,E\cup v]} 
  & {\text{if $v\not\in E$ and $K(v)\geq Tk$}} \\
% & {\text{with $q\in [(T-2)k,Tk), t>0$}} \\
   \\
 { [K-2v^*,E\cup v]} 
  & {\text{if $v\not\in E$ and $K(v)<(T-2)k$}} \\
\end{array}
\right.
\]

\begin{lem}\label{lem:mmas}
The Maslov grading of 
$H_0[K,E]$ (if this term is not zero) is equal
to $\grad ([K,E])+1$.
\end{lem}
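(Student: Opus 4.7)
The plan is to compute the Maslov-grading shift $\grad(H_0[K,E]) - \grad([K,E])$ directly in each of the two nontrivial cases of $H_0$. From the defining formula
\[
\grad(U^j\otimes [K,E]) = -2j + 2g([K,E]) + |E| + \tfrac{1}{4}(K^2 + |\Vertices(G)|),
\]
most pieces cancel and the shift reduces to an identity involving only $g$ (and, in Case~D, the change $(K - 2v^*)^2 - K^2$, which one computes using $K\cdot v^* = K(v)$ and $(v^*)^2 = v^2 = -k$ to be $-4K(v) - 4k$).

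First I would treat the case $H_0[K,E] = [K, E\cup v]$, arising when $v\notin E$ and $K(v)\geq Tk$. Here $K$ is unchanged and $|E\cup v| - |E| = 1$, so the shift is $2(g([K, E\cup v]) - g([K,E])) + 1$, and one needs $g([K, E\cup v]) = g([K,E])$. Since $A_v([K, E\cup v]) = g([K, E])$ by definition, this is equivalent to $a_v[K, E\cup v] = 0$. When $K(v)\geq k$, Lemma~\ref{lem:ehatok} supplies the vanishing immediately; when $K(v)\in[-k,k)$, the inequality $K(v)\geq Tk$ forces us to be in the type-$a$ regime (where $Tk=-k$), in which case $i_0 = 0$, so the central element is $[K, E\cup v]$ itself, and $a_v[K, E\cup v] = 0$ is literally the definition of type-$a$.

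Next, in the case $H_0[K,E] = [K - 2v^*, E\cup v]$ (which occurs when $v\notin E$ and $K(v)<(T-2)k$), the shift picks up the additional contribution computed above, becoming $2(g([K-2v^*, E\cup v]) - g([K,E])) + 1 - K(v) - k$. The desired identity reduces to $g([K-2v^*, E\cup v]) = g([K,E]) + (K(v)+k)/2$, and the right-hand side is precisely $B_v([K-2v^*, E\cup v])$ by Equation~\eqref{eq:refor}. Hence it suffices to show $b_v[K-2v^*, E\cup v] = 0$. Using $(K-2v^*)(v) = K(v) + 2k$ (read off from the same identities as before), Lemma~\ref{lem:ehatok} handles this when $K(v)\leq -3k$; for $K(v)\in[-3k,-k)$ the hypothesis $K(v)<(T-2)k$ forces type-$b$ (where $(T-2)k = -k$), and then $i_0 = -1$ places the central element at $[K-2v^*, E\cup v]$, so $b_v$ there vanishes by the very definition of type-$b$.

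The main bookkeeping obstacle will be verifying that the intervals $[(T-2)k, Tk)$ on which $H_0$ is declared to vanish are exactly calibrated so that every regime of $K(v)$ in which $H_0[K,E]$ is nonzero falls into one of the cases treated above --- either by the good-vertex Lemma~\ref{lem:ehatok} or by the type-$a$/type-$b$ classification. Once this calibration is checked across the two types and the outer/borderline subregions of $K(v)$, the remaining content is straightforward algebra from the definitions of $g$, $A_v$, $B_v$, and $\grad$.
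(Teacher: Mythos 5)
Your proposal is correct and takes essentially the same route as the paper: both arguments reduce to the vanishings $a_v[K,E\cup v]=0$ when $K(v)\geq Tk$ and $b_v[K-2v^*,E\cup v]=0$ when $K(v)<(T-2)k$, obtained from Lemma~\ref{lem:ehatok} together with the type-$a$/type-$b$ dichotomy at the central value $i_0$. The paper simply packages the conclusion by observing that $[K,E]$ is the $U^0$-component of $\partial_v H_0[K,E]$ and that $\partial$ drops the Maslov grading by one, which is your direct grading computation in disguise.
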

\begin{proof}
By considering $\partial _v H_0[K,E]$ (where $\partial _v$ denotes the components of $\partial$ when we delete $v$ from the set), we see that the component with vanishing $U$-power is
exactly $[K,E]$, hence the claim follows from the fact that $\partial$ drops Maslov grading
by one.
\end{proof} 

\begin{defn}
  Define the map
   $C_0\colon \CFmComb (G)\to
  \CFmComb (G)$ by
\[
C_0[(K,p),E]=[(K,p),E]+\partial \circ H_0 [(K,p),E]+ H_0\circ \partial 
[(K,p),E] .
\]

Notice that for each $[K,E]$ there is $N=N_{[K,E]}$ with the property
that the $N^{th}$ iterate of $C_0$ stabilizes; i.e.
writing
$C_0^n[K,E]=\overbrace{n}{C_0\circ \dots \circ C_0}[K,E]$, we have that
$C_0^N[K,E]=C_0^{N+1}[K,E]$. Thus, it makes sense to talk about the infinite
iterate
$C_0^{\infty}$. We call this stabilized map $C_v=C^{\infty}_0$ the {\em contraction map}.
\end{defn}

Notice  that both $C_0$ and $C_v$ preserve the
Maslov grading (in the sense that if $C_v[K,E]\neq 0$ then its Maslov
grading is equal to the Maslov grading of $[K,E]$).

\begin{thm}\label{thm:contraction}
For a good vertex $v$ the contraction map $C=C_v$ satisfies
$ C[K,E]=0$ if $v\in E$.
\end{thm}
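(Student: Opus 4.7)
The plan is to prove the theorem by an explicit computation of $C_0$ on generators $[K,E]$ with $v\in E$, followed by a termination argument. Since $H_0[K,E]=0$ whenever $v\in E$, the definition of $C_0$ collapses to
\[
C_0[K,E] \;=\; [K,E] + H_0(\partial[K,E]).
\]
Moreover, every term of $\partial[K,E]$ obtained by removing some $w\in E$ with $w\neq v$ still has $v$ in its underlying set, and is therefore killed by $H_0$. Hence the computation reduces to evaluating $H_0$ on the two ``$v$-terms'' $U^{a_v[K,E]}\otimes[K,E-v]$ and $U^{b_v[K,E]}\otimes[K+2v^*,E-v]$.

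Next I would carry out a case analysis on $K(v)\in\Z$, writing $k=-v^2>0$ and $T=T_{[K,E-v]}\in\{\pm 1\}$. In the regime $K(v)\ge k$, Lemma~\ref{lem:ehatok} forces $a_v[K,E]=0$; and since $K(v)\ge k\ge Tk$ for both choices of $T$, one checks that $H_0[K,E-v]=[K,E]$, which cancels the leading $[K,E]$ in $C_0$ over $\mathbb F$. The second contribution $H_0[K+2v^*,E-v]$ is governed by the $v$-value $(K+2v^*)(v)=K(v)-2k$, which either remains above the $H_0$ kill strip $[(T-2)k,Tk)$ (producing $[K+2v^*,E]$) or lies inside it (producing $0$). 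This gives the clean recursion
\[
C_0[K,E] \;=\; \begin{cases} U^{b_v[K,E]}\otimes[K+2v^*,E] & \text{if $K(v)-2k$ lies above the strip,}\\ 0 & \text{if $K(v)-2k$ lies inside the strip.}\end{cases}
\]
The symmetric regime $K(v)\le -k$ yields $U^{a_v[K,E]}\otimes[K-2v^*,E]$ or $0$ analogously. The interior regime $K(v)\in[-k,k)$ would be handled by direct verification in the two sub-cases $T=\pm 1$: using $\min(a_v[K,E],b_v[K,E])=0$ together with the definition of the type, one of the two $H_0$-terms always returns $[K,E]$ and the other either returns $[K,E]$ or vanishes, so that $C_0[K,E]=0$ modulo $2$.

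Given these formulas, termination is immediate: a nonzero output of $C_0$ is a single generator with the same $E$ (still containing $v$), the same type of $[K,E-v]$, and $v$-coordinate shifted by exactly $\pm 2k$ strictly toward the type-dependent ``kill region'' on which $C_0$ returns zero. Iterating at most $\lceil |K(v)|/(2k)\rceil+1$ times lands in this region, so $C_v[K,E]=C_0^{\infty}[K,E]=0$, as claimed. The main obstacle is the bookkeeping in the interior regime $K(v)\in[-k,k)$: outside this window Lemma~\ref{lem:ehatok} hands us the vanishing of one of $a_v,b_v$ for free, but inside it one must simultaneously juggle the type definition, the identity $\min(a_v,b_v)=0$, and the position of $K(v)\pm 2k$ relative to the strip endpoints. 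It is precisely the goodness of $v$---applied through Lemma~\ref{lem:ehatok}---that keeps this case analysis finite and forces the needed cancellations.
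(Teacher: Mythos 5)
Your proposal is correct, but it organizes the argument differently from the paper. The paper first writes down the \emph{closed-form} telescoped homotopy $H=H_v$ (the finite sums $\sum_i U^{s_i}\otimes[K+2iv^*,E\cup v]$, etc.), asserts that the stabilized map satisfies $C_v=\Id+\partial\circ H+H\circ\partial$, and then, for $v\in E$, expands $H(\partial[K,E])$ into two sums whose terms cancel in pairs — with the matching of $U$-powers justified indirectly by equality of Maslov gradings — leaving only the $i=0$ term $[K,E]$, so $C_v[K,E]=0$ in one shot. You never form $H_v$: you compute the one-step map $C_0$ on a generator with $v\in E$ (correctly reducing to the two $v$-components of $\partial$, since $H_0$ kills anything whose set contains $v$), and show it is either zero or a single generator of the form $U^{\bullet}\otimes[K\pm 2v^*,E]$ with $K(v)$ moved by $2k$ toward the kill window, so the iteration terminates at $0$ and $C_v=C_0^\infty$ vanishes. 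What each buys: the paper's route produces the explicit $H_v$, which it reuses later (e.g.\ in proving $R\circ P=C_e$ and the filtered statements), at the cost of the somewhat informal ``$U$-powers match by Maslov grading'' step; your route is more elementary and self-contained for this theorem, since in every cancelling pair the relevant exponent is forced to be $U^0$ directly by Lemma~\ref{lem:ehatok} or by the type together with $\min(a_v,b_v)=0$, but it requires the finer regime/type bookkeeping and does not yield $H_v$ for later use. Two small points to tighten when writing it up: state explicitly that the type $T$ is unchanged under $K\mapsto K\pm 2v^*$ (immediate from the definition, and needed for your recursion), and fix the phrasing in the window $K(v)\in[-k,k)$ — if \emph{both} $H_0$-terms returned $[K,E]$ the three copies would sum to $[K,E]\neq 0$ over $\Field$, so the correct statement (which your case check does establish) is that exactly one of them returns $[K,E]$ with trivial $U$-power while the other vanishes.
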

\begin{proof}
Consider the map $H=H_v\colon \CFmComb (G)\to \CFmComb (G)$ defined as
\[
H[K,E]=
\left\{
\begin{array}{ll}
  0 & {\text{if $v\in E$ or $(T-2)k\leq K(v)<Tk$}} \\
  \\
  \sum_{i=0}^{t-1} U^{s_i} \otimes [K+2iv^*,E\cup v] 
  & {\text{if $v\not\in E$ and $K(v)=q+2tk$}} \\
  & {\text{with $q\in [(T-2)k,Tk), t>0$}} \\
  \\
  \sum_{i=0}^{-t-1} U^{r_i} \otimes [K-2(i+1)v^*,E\cup v] 
  & {\text{if $v\not\in E$ and $K(v)=q+2tk$}} \\
  & {\text{with $q\in [(T-2)k,Tk), t<0$}}
\end{array}
\right.
\]
where $s_0=1, s_{i+1}=s_i+b_v[K+2iv^*, E\cup v ]$ and 
$r_0=0, r_{i+1}=r_i+a_v[K-2(i+1)v^*, E\cup v]$.

It is easy to see that $C[K,E]=[K,E]+\partial \circ H [K,E]+
H\circ \partial [K,E]$. If $v\in E$, then the middle term of
this expression is obviously zero. Suppose
first that $K(v)=q+2tk$ (with $q\in [(T-2)k, Tk)$) and $t$ is positive. Then
we need to consider only those parts of $\partial [K,E]$ where the set
$E-w$ does not contain $v$ (since for $v\in E-w$ the map $H$ will annihilate 
the term anyhow), implying that
\begin{equation}\label{eq:cek}
H(\partial [K,E])=\sum _{i=0}^{t-1}U^{s_i}[K+2iv^*, E]+U^{b_v[K,E]}
\sum _{i=0}^{t-2}U^{s_i'}[K+2v^*+2iv^*, E] .
\end{equation}
(Notice that $a_v[K,E]=0$ in this case, and also the second summation
goes for one less term, since $t$ for $K+2v^*$ is one less than for
$K$.)  It is clear that terms come in pairs and since they have equal
Maslov gradings, the $U$-powers necessarily match up. (The actual
identities here can be checked by direct and sometimes lengthy
computations; since the principle based on Maslov gradings is much
shorter, we will not provide those explicite formulae here.) The term
corresponding to $i=0$ in the first sum has no counterpart, hence the
sum of~\eqref{eq:cek} reduces to $[K,E]$, therefore $C[K,E]=0$ follows
at once.  The exact same computation for $K(v)=q+2tk$ with $t\leq 0$
(after similar cancellations) provides $C[K,E]=0$ in this case as
well.
\end{proof}

\begin{exa}
  We consider the following special case: suppose that $v=e$ is a leaf
  of the graph with $e^2=-1$. Since this vertex is good, the previous
  results apply.  The value of $C_e$ can be determined provided we
  compute the types of all the elements appearing in this
  computation. It is hard to give a closed formula, therefore we
  will just outline the computation and highlight the important
  features of the resulting expressions. Recall that
  $C_e=Id+\partial \circ H_e+H_e\circ \partial$.  Suppose that
  $(K,p,j)$ is a characteristic cohomology class, where $j$ is the
  value on $e$, $p$ is the value on the unique vertex $v$ connected to
  $e$ and $K$ is the restriction of the class to $G-v-e$.  In
  computing the value $C_e[(K,p,j),E]$, we start with determining the
  boundary of $H_e[(K,p,j,),E]$. The terms in $\partial (H_e [(K,p,j),
  E])$ are of two types: for two terms the set will be equal to $E$
  (when we take $\partial _e$) while for all the others the set will
  be of the shape $E-w\cup e$ for some $w\in E$.  The first type of
  contribution equals either $[(K,p,j), E]+U^x[(K,p+j+1,-1),E]$ or
  $[(K,p,j), E]+U^y[(K,p+j+3,-3),E]$ (depending on whether
  $[(K,p,j),E]$ is of type-$b$ or of type-$a$). Here the $U$-powers
  are determined by the requirement that the Maslov gradings of the
  terms are equal to the Maslov grading of $[(K,p,j),E]$ (and we do
  not describe their actual values here explicitly).

  The further terms involve sets of the form $E-w\cup e$.  We need to
  distinguish two cases, depending on whether $e$ and $w$ are
  connected or not. Suppose first that $w$ is not connected to
  $e$. Then each such term appears once in $\partial _w\circ H$ and
  once in $H\circ \partial _w$, and the terms cancel if the type of
  the element is the same as the type of $[(K,p,j),E]$ and do not
  cancel otherwise. The case when $w$ is connected to $e$ is slightly
  different, since in computing $H\circ \partial _w$ a further term
  appears (since in one component of $\partial _w$ the value of the
  cohomology class on $e$ becomes higher). These terms will be
  analyzed in detail in the proof of Proposition~\ref{prop:respr}.

  In particular, since the type of $[(K,p,j)+2w^*,E]$ is the same as
  the type of $[(K,p+j+1,-1)+2w^*,E]=[(K,p,j)+2w^*+2ne^*, E]$ (with
  $j=2n-1$), it follows that
\begin{equation}\label{eq:ugyanaz}
C_e [(K,p,j),E]=C_e [(K,p+j+1,-1), E].
\end{equation}
 \end{exa}
After these preparations we return to relating the lattice homology of
a graph and its blow-up. We will reexamine the blow-up of a vertex ---
the filtered version of the resulting identity will be used in the
proof of the invariance under the blow-up of an edge.

Suppose that $G$ is a given framed graph containing the vertex $v$,
and $G'$ is given by blowing up $v$. As before, the new vertex
introduced by the blow-up will be denoted by $e$. Recall that the
framing of $v$ in $G'$ is one less than its framing in $G$.  In the
following we write characteristic vectors for $G'$ as triples
$(K,p,j)$, where $K$ denotes the restriction of the characteristic
vector to the subspace spanned by the subgraph $G-v=G'-\{ e,v\}
\subset G'$, $p$ denotes the value of the characteristic vector on the
distinguished vertex $v$, and $j$ denotes the value on the new vertex
$e$. Similarly, characteristic vectors on $G$ will be denoted by
$(K,p)$, where $p$ is the value on $v$ and $K$ is the restriction to
$G-v$.

We define the ``blow-down'' map $P\colon \CFmComb(G')\longrightarrow
\CFmComb(G)$ by the formula
\[
P[(K,p,j),E]= \left\{\begin{array}{ll}
U^{s}\otimes [(K,p+j),E] & {\text{if $e\not\in E$}} \\
0	& {\text{if $e\in E$,}} 
\end{array}\right.
\]
where $s=g[(K,p+j),E]-g[(K,p,j),E]+\frac{j^2-1}{8}.$ The value of $s$
is taken to ensure that the Maslov grading of $P[(K,p,j),E])$ is equal
to the Maslov grading of $[(K,p,j),E]$. Since for any subset $E$ not
containing $e$ the inequality $f([(K,p+j),I])\geq
f([(K,p,j),I])-\frac{j^2-1}{8}$ holds for $I\subset E$, it follows
that $s\geq 0$.

\begin{lem}
\label{lemma:PisChain}
The blow-down map $P$ is a chain map.
\end{lem}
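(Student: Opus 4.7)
The plan is to verify the chain-map identity $P \circ \partial = \partial \circ P$ by applying both sides to a generator $[(K,p,j),E]$ of $\CFmComb(G')$, separating into the two cases $e\in E$ and $e\notin E$, and matching the output generators term by term.

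When $e\notin E$, the formula $P[(K,p,j),E]=U^{s}\otimes [(K,p+j),E]$ makes $\partial\circ P$ into a sum, over $w\in E$, of $U^{s+a_{w}[(K,p+j),E]}\otimes[(K,p+j),E-w]$ and $U^{s+b_{w}[(K,p+j),E]}\otimes[(K,p+j)+2w^{*}_{G},E-w]$. Applying $P$ to $\partial[(K,p,j),E]$ produces analogous terms, and the target generators of the two expressions agree once one tracks carefully how $v^{*}$ depends on the ambient graph: for $w\neq v$ both $(K,p,j)+2w^{*}_{G'}$ and $(K,p+j)+2w^{*}_{G}$ shift $K$ identically at the neighbours of $w$, while for $w=v$ one has $(K,p,j)+2v^{*}_{G'}=(K'',p+2(m_{v}-1),j+2)$, whose $P$-image is $(K'',p+j+2m_{v})=(K,p+j)+2v^{*}_{G}$. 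Matching $U$-exponents on corresponding terms reduces, via $a_{w}[X,E]=g[X,E-w]-g[X,E]$ and the alternate formula
\[
b_{w}[X,E]=\frac{X(w)+w^{2}}{2}+g[X+2w^{*},E-w]-g[X,E]
\]
from Equation~\eqref{eq:refor}, to elementary cancellations among the $g$-values hidden in $s$. The only delicate subcase is $w=v$: the framing discrepancy $v^{2}_{G}-v^{2}_{G'}=1$ contributes an excess $\frac{j+1}{2}$ on one side, but this is precisely matched by the change $\frac{(j+2)^{2}-1}{8}-\frac{j^{2}-1}{8}=\frac{j+1}{2}$ coming from the $\frac{j^{2}-1}{8}$-term in the definition of $s$.

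When $e\in E$ one has $P[(K,p,j),E]=0$, so $\partial\circ P=0$ and it suffices to show that $P\circ \partial[(K,p,j),E]=0$. Every term in $\partial[(K,p,j),E]$ with $w\neq e$ still contains $e$ in its subset, and is hence killed by $P$; only the $w=e$ contributions survive. These are $U^{a_{e}}\otimes [(K,p,j),E-e]$ and $U^{b_{e}}\otimes [(K,p+2,j-2),E-e]$, using $(K,p,j)+2e^{*}=(K,p+2,j-2)$ (because $e^{2}=-1$ and $v\cdot e = 1$). Under $P$ both land on multiples of the same generator $[(K,p+j),E-e]$ since $(p+2)+(j-2)=p+j$, so over $\F$ they cancel provided their $U$-exponents coincide. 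Expanding $s_{1},s_{2}$ from the definition of $P$ and $b_{e}$ from Equation~\eqref{eq:refor}, this reduces to the one-line identity
\[
a_{e}+s_{1}-b_{e}-s_{2}=\frac{j^{2}-1}{8}-\frac{j-1}{2}-\frac{(j-2)^{2}-1}{8}=0.
\]

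The principal obstacle is purely bookkeeping: one must keep in mind that $v^{*}$ is the Poincar\'e dual, so $K+2w^{*}$ shifts $K$ not only at $w$ (by $2w^{2}$) but also at every neighbour of $w$ (by $2$), and that $v^{*}_{G}$ and $v^{*}_{G'}$ really are different cohomology classes. Once these shifts are correctly absorbed into $s$, the entire argument reduces to systematic substitution using the definitions of $g$, $a_{w}$, and $b_{w}$ together with the observation that the intersection form on $G'$ agrees with that on $G$ except at $v^{2}$ (which drops by $1$) and the new pairings involving $e$.
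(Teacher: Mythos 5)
Your proof is correct, and it follows the same skeleton as the paper's argument: split into the cases $e\in E$ and $e\notin E$; when $e\in E$ note that only the $w=e$ terms of $\partial'$ survive $P$ and land on multiples of the single generator $[(K,p+j),E-e]$; when $e\notin E$ match the generators of $P\circ\partial'$ and $\partial\circ P$ term by term, including the $w=v$ identification of the $P$-image of $(K,p,j)+2v^*_{G'}$ with $(K,p+j)+2v^*_{G}$. Where you diverge is in how the $U$-exponents are matched. The paper invokes the Maslov-grading principle -- the exponent $s$ in the definition of $P$ was chosen precisely so that $P$ preserves the Maslov grading, so once two terms are multiples of the same generator their $U$-powers are forced to agree -- and simply asserts $c_1(w)=c_2(w)$ and $d_1(w)=d_2(w)$. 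You instead carry out the explicit arithmetic with $g$, $a_w$, $b_w$ and Equation~\eqref{eq:refor}, e.g.\ the identity $a_e+s_1-b_e-s_2=\tfrac{j^2-1}{8}-\tfrac{j-1}{2}-\tfrac{(j-2)^2-1}{8}=0$ in the $e\in E$ case and the compensation of $\tfrac{j+1}{2}$ by the change in the $\tfrac{j^2-1}{8}$-term in the $w=v$ subcase; I checked these and they are right. Both routes are valid: the paper's is shorter and reuses a principle applied throughout the section, while yours supplies exactly the ``direct and sometimes lengthy computations'' the authors explicitly chose to omit, with the added benefit of making the $v^*$-versus-$v^*_{G'}$ bookkeeping transparent. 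One small imprecision in wording: the $\tfrac{j+1}{2}$ excess at $w=v$ arises from the framing drop \emph{together with} the shift of the characteristic value at $v$ from $p$ to $p+j$ (contributing $\tfrac12$ and $\tfrac{j}{2}$ respectively), not from the framing discrepancy alone; the quantity and the cancellation you record are nonetheless correct.
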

\begin{proof}
We wish to prove 
\begin{equation}
\label{eq:ChainMap}
\partial\circ P[(K,p,j),E] = P\circ \partial'[(K,p,j),E].
\end{equation}
First, we consider the case where $e\in E$.  In this case the
left hand side is zero, while
\begin{eqnarray*}
P\circ \partial'[(K,p,j),E] &=&
P(U^{a_e[(K,p,j),E]}\otimes [(K,p,j),E-e]) \\
&& +P (U^{b_e[(K,p,j),E]}\otimes [(K,p+2,j-2),E-e]) \\
&=& U^{d_1}\otimes [(K,p+j),E-e] + U^{d_2}  [(K,p+j),E-e],
\end{eqnarray*}
for some appropriately chosen $d_1$ and $d_2$.
By the equality of Maslov gradings the two expressions are equal, and
hence the terms obviously cancel.

Next, suppose that $e\not\in E$. 
Observe that
\begin{eqnarray*}
\lefteqn{P\circ \partial'[(K,p,j),E]=} \\
&& 
\sum_{w\in E} U^{c_1(w)}\otimes [(K,p+j),E-w] 
+ U^{d_1(w)} \otimes[(K,p+j)+2w^*,E-w],
\end{eqnarray*}
and 
\begin{eqnarray*}
\lefteqn{\partial \circ P[(K,p,j),E]=} \\
&& \sum_{w\in E} U^{c_2(w)}\otimes [(K,p+j),E-w] 
 + U^{d_2(w)}\otimes [(K,p+j)+2w^*,E-w],
\end{eqnarray*}
Once again, the argument based on Maslov gradings shows that $c_1(w)=
c_2(w)$ and $d_1(w)=d_2(w)$, completing the verification of
Equation~\eqref{eq:ChainMap}, hence concluding the proof of the lemma.
\end{proof}

Define the ``blow-up'' map $R\colon \CFmComb(G) \longrightarrow
\CFmComb(G')$ by the formula
\[
R([(K,p),E])=C_e([K,p+1,-1),E]).
\]
Since $e\not\in E$, we have that $g[(K,p+1,-1),E]=g[(K,p),E]$, implying 
that the Maslov grading of $R[(K,p),E]$ (if this term is not zero) is equal 
to the Maslov grading of $[K,E]$.
\begin{lem}
The map $R$ is a chain map.
\end{lem}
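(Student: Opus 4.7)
The plan is to factor $R = C_e \circ \iota$, where $\iota\colon \CFmComb(G)\to \CFmComb(G')$ is the naive ``lift'' $\iota[(K,p),E] = [(K,p+1,-1),E]$. Since $e$ is a leaf of $G'$ with framing $-1$ it is good, and $C_e$ is automatically a chain map (as the stabilization of $\Id + \partial'\circ H_e + H_e\circ \partial'$, which by construction commutes with $\partial'$). Consequently $\partial' R = C_e \partial' \iota$, and it suffices to verify
\[
C_e\circ \iota\circ \partial \;=\; C_e\circ \partial'\circ \iota
\]
on generators $[(K,p),E]$.

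The numerical input that drives this identity is the observation that for any $I\subset V(G)$ with $e\notin I$,
\[
f^{G'}([(K,p+1,-1),I]) \;=\; f^G([(K,p),I]),
\]
because the $+1$ added to the value at $v$ is exactly cancelled by the drop $v^2_{G'} = v^2_G - 1$ produced by the blow-up (the two $\pm1$ contributions appear only when $v\in I$, and they cancel pointwise). From this I would deduce that $g$, $A_w$, and $B_w$ for $w\neq e$ coincide on the two sides, so that the $U$-exponents $a_w$ and $b_w$ computed in $G$ agree with their primed counterparts computed in $G'$.

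With the $U$-powers under control, the comparison reduces to a term-by-term check of the characteristic vectors. For each $w\in E$ with $w\neq v$, the class $2w^*_{G'}$ acts on the values at vertices of $G$ exactly as $2w^*_G$ does, and leaves the $e$-value fixed at $-1$ (since $w$ is not adjacent to $e$ in $G'$); hence the $w$-summands of $\partial'\iota[(K,p),E]$ and $\iota\partial[(K,p),E]$ agree as elements of $\CFmComb(G')$. When $v\in E$ the $a_v$-summand also matches on the nose. The only genuine discrepancy occurs in the $b_v$-summand: setting $k=-v^2_G$, the $G'$-side produces $[(K'',p-2k-1,+1),E-v]$ while the $\iota$-side produces $[(K'',p-2k+1,-1),E-v]$, with identical $K''$ and identical $U$-power.

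The crux of the argument, and the step I expect to require the most care, is that this residual discrepancy is absorbed by $C_e$. This is precisely the content of Equation~\eqref{eq:ugyanaz}: $C_e$ identifies $(K'',p',j')$ with $(K'',p'+j'+1,-1)$, and indeed $(p-2k-1)+(+1)+1 = p-2k+1$, so $C_e$ sends both elements to the same chain. Applying $C_e$ to the entire difference $\partial'\iota - \iota\partial$ therefore yields zero, establishing that $R$ is a chain map.
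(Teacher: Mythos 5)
Your proof is correct, but it follows a genuinely different route from the paper's. The paper does not work with the naive lift $\iota$ directly: it inserts an auxiliary map $Q$, which agrees with $\iota$ when $v\not\in E$ but carries an extra correction term $U^r\otimes[(K,p+1,-1)+2v^*,(E-v)\cup e]$ when $v\in E$, proves by a term-by-term computation that $Q$ itself is a chain map (matching $U$-powers via Maslov-grading arguments rather than via your identity $f^{G'}([(K,p+1,-1),I])=f^{G}([(K,p),I])$ for $e\not\in I$), and then observes that $R=C_e\circ Q$ because $C_e$ annihilates every generator whose set contains $e$; so $R$ is a composition of chain maps. You instead keep $\iota$, isolate the unique discrepancy between $\partial'\circ\iota$ and $\iota\circ\partial$ (the $b_v$-summand, where $[(K'',p-2k-1,+1),E-v]$ appears in place of $[(K'',p-2k+1,-1),E-v]$ with equal $U$-powers), and absorb it with $C_e$ via \eqref{eq:ugyanaz}. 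These are really the same cancellation seen at different stages: in the paper, $\partial_e$ of the correction term in $Q$ produces precisely the trade of the problematic class for $[(K'',p-2k+1,-1),E-v]$, which is the chain-level shadow of \eqref{eq:ugyanaz}. What your route buys is a shorter verification and a clean numerical explanation of why all the exponents $a_w,b_w$ agree on the two sides (the $+1$ at $v$ exactly compensates the drop of $v^2$ under blow-up, so $f$, $g$, $A_w$, $B_w$ coincide for sets avoiding $e$), in place of the paper's repeated appeal to Maslov gradings. What it costs is a dependence on \eqref{eq:ugyanaz}, whose derivation in the paper is itself only sketched through the type-$a$/type-$b$ analysis, whereas the paper's $Q$-argument keeps this lemma independent of that equation (the paper does rely on \eqref{eq:ugyanaz} later anyway, to prove $R\circ P=C_e$, so no net logical burden is added); it is also worth noting that the instance of \eqref{eq:ugyanaz} you invoke is the mildest one, with the underlying set $E-v$ containing neither $e$ nor $v$ and the two classes differing by $2e^*$, so in particular their Maslov gradings visibly agree there.
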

\begin{proof}
Let us first consider the map $Q\colon \CFmComb (G) \to \CFmComb (G')$
given by the formula
\[
Q([(K,p),E])=
\left\{\begin{array}{ll}
{[(K,p+1,-1),E]} & {\text{if $v\not\in E$}} \\
{[(K,p+1,-1),E]} + \\
+ U^r\otimes [(K,p+1,-1)+2v^*,(E-v)\cup e]
& {\text{if $v\in E$,}} 
\end{array}\right.
\]
where $r=b_v[(K,p+1,-1), E\cup e]\geq 0$.  The map $Q$ preserves the
Maslov gradings: For the first term we appeal to the observation
that when $e\not \in E$ then $g[(K,p+1,-1),E]=g[(K,p),E]$. For the
second term the exponent $r$ can be shown to be equal to
$B_v[(K,p+1,-1),E\cup e]-g[(K,p),E]$, since $B_v[(K,p+1,-1), E\cup
e]=\min \{f((K,p+1,-1), I)\mid v\in I\subset E\cup e\}$ and
$g[(K,p+1,-1),E]=g[(K,p),E]$. Now the difference of the Maslov
gradings of $[(K,p),E]$ and of $[(K,p+1,-1)+2v^*, (E-v)\cup E]$ can be
easily identified with twice the above difference, concluding the
argument.

Notice that $R=C_e\circ Q$ (since $C_e$ maps the term with set
containing $e$ to zero). Since $C_e$ is a chain map, we only need to
verify that $Q$ is a chain map. As in \eqref{eq:ChainMap}, we need to
verify that 
\begin{equation}\label{eq:kiem}
Q\circ \partial [(K,p), E]=\partial ' \circ Q[(K,p),
E]. 
\end{equation}
Consider first the components of the boundary with set equal to $E-w$
for some $w\in E$ distinct from $v$. On both sides these elements are
of the form $[(K,p), E-w]$ and $[(K,p)+2w^*, E-w]$ (multiplied with
some $U$-powers).  Since the terms coincide, and the Maslov gradings
are equal, the $U$-powers should be equal as well, verifying the
equation for such terms.  The above argument verifies the required
identity of~\eqref{eq:kiem} in the case $v\not \in E$.

Assume now that $v\in E$ and consider $Q(\partial _v[(K,p),E] )$. We
claim that it is equal to
\begin{equation}\label{eq:hatarszam}
(\partial _v+\partial _e)([(K,p+1,-1),E]+ U^r\otimes
[(K,p+1,-1)+2v^*,(E-v)\cup e]).
\end{equation}
Indeed, $\partial _v ([(K,p), E]=U^{a_v[(K,p),E]}[(K,p),E-v]+
U^{b_v[(K,p),E]}[(K,p)+2v^*,E-v]$, and its $Q$-image is simply
\[
 U^{a_v[(K,p),E]}[(K,p+1,-1),E-v]+U^{b_v[(K,p),E]}[(K+2v^*,p+2(v^2)+1, -1),E-v].
 \]
Now writing out \eqref{eq:hatarszam} we get four terms:
\[
U^{a_1}[(K,p+1,-1), E-v]+U^{b_1}[(K, p+1, -1)+2v^*, E-v]+
\]
\[
+ U^{a_2}[(K,p+1, -1)+2v^*, E-v]+U^{b_2}[(K,p+1,-1)+2v^*+2e^*, E-v].
\]
(As usual, we did not specify the actual $U$-powers, which are 
dictated by the fact that the maps preserve the Maslov gradings.)
The second and the third term cancel each other, while the first and
the fourth are equal to the terms appearing in $Q(\partial _v [(K,p),E])$.
(In comparing the fourth term above to the second term in
$Q(\partial _v [(K,p),E])$ one needs to take the change of $v^2$ into account.)
This last observation then 
concludes the proof of the lemma.
\end{proof}
\begin{thm}
\label{thm:CombBlowUp}
$P$ and $R$ are chain homotopy equivalences.
\end{thm}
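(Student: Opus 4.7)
The plan is to establish $P\circ R=\mathrm{Id}_{\CFmComb(G)}$ on the nose, and then to identify $R\circ P$ with the contraction $C_e$ as chain maps; combined with the defining identity $C_e=\mathrm{Id}+\partial'\circ H_e+H_e\circ\partial'$, this exhibits $H_e$ as a chain homotopy between $R\circ P$ and $\mathrm{Id}_{\CFmComb(G')}$.

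First I would verify $P\circ R=\mathrm{Id}$. The chain homotopy $H_e$ appearing in the proof of Theorem~\ref{thm:contraction} produces only terms with $e$ in the set, and such terms are annihilated by $P$. Hence $P\circ H_e=0$, and since $P$ is a chain map (Lemma~\ref{lemma:PisChain}),
\[
P\circ C_e=P+P\circ\partial'\circ H_e+P\circ H_e\circ\partial'=P+\partial\circ(P\circ H_e)=P.
\]
Consequently $P\circ R=P\circ C_e\circ Q=P\circ Q$. A direct check, split on whether $v\in E$, yields $P\circ Q=\mathrm{Id}$: for subsets $I\subseteq E$ with $e\notin I$, the shift of $K(v)$ from $p$ to $p+1$ cancels against the shift of $v^2$ from $m_v$ to $m_v-1$, so $f([(K,p),I])=f([(K,p+1,-1),I])$, which forces the $U$-exponent in $P([(K,p+1,-1),E])$ to vanish.

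Next I would show the on-the-nose identity $R\circ P=C_e$. For $[(K,p,j),E]$ with $e\in E$, both sides vanish (left side because $P$ kills such elements, right side by Theorem~\ref{thm:contraction}). For $e\notin E$, Equation~\eqref{eq:ugyanaz} gives
\[
R\circ P\,[(K,p,j),E]=U^{s}\cdot C_e[(K,p+j+1,-1),E]=U^{s}\cdot C_e[(K,p,j),E],
\]
where $s$ is the $U$-exponent in the definition of $P$. Now $R$, $P$, and $C_e$ are all chain maps and hence preserve the Maslov grading, while multiplication by $U^{s}$ drops the Maslov grading by $2s$. If $C_e[(K,p,j),E]\ne 0$, comparing Maslov gradings on the two sides of the displayed equation forces $s=0$, so $R\circ P=C_e$ on $[(K,p,j),E]$. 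Otherwise both sides vanish, using injectivity of $U$-multiplication on the free $\F[U]$-module $\CFmComb(G')$. Either way, $R\circ P=C_e$ as claimed.

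The principal obstacle is the last step: the identity $R\circ P=C_e$ is sensitive to the $U$-exponent $s$ built into $P$, whose value depends subtly on $g$-values on $G$ versus $G'$. The Maslov-grading dichotomy above — either $s=0$ or $C_e$ vanishes on the input — replaces what would otherwise be a lengthy case-by-case comparison of $g$-values under the blow-up. Once $R\circ P=C_e$ is in place, the equality $\mathrm{Id}-R\circ P=-(\partial'\circ H_e+H_e\circ\partial')$ (working over $\F$) provides the desired chain homotopy, completing the proof.
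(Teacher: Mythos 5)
Your proposal is correct and takes essentially the same route as the paper: both verify $P\circ R=\Id$, identify $R\circ P$ with the contraction $C_e$ using Equation~\eqref{eq:ugyanaz} together with the Maslov-grading preservation of $P$, $R$ and $C_e$, and then let $H_e$ from the proof of Theorem~\ref{thm:contraction} supply the homotopy $R\circ P\simeq \Id$. The only differences are organizational — you obtain $P\circ R=\Id$ from $P\circ H_e=0$ and a direct check that $P\circ Q=\Id$, and you make explicit the dichotomy ($s=0$ or $C_e$ vanishes on the generator) that the paper leaves implicit; just note that the grading preservation of $P$, $R$, $C_e$ comes from how their $U$-exponents were chosen, not from their being chain maps.
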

\begin{proof}
  First we examine the composition $P\circ R$. We claim that since
  $g[(K,p+1,-1),E]=g[(K,p),E]$, we have that $P\circ R=\Id$. Indeed,
  applying $P$ to $C_e[(K,p+1,-1),E]$, all terms with set containing
  $e$ will be mapped to zero, while the remaining single term is
  either $P[(K,p+1,-1),E]$ or $P[(K,p+3,-3),E]$ (with some $U$-power
  in front).  In both cases the image is $[(K,p),E]$ (multiplied with
  some power of $U$).  Since the maps preserve the Maslov grading,
  the power of $U$ is equal to zero, hence $P\circ R$ is equal to the
  identity.  
  
  Regarding the composition $R\circ P$, we claim that $R\circ P=C_e$.
  If $e\in E$ then both $P$ and $C_e$ vanish, hence the equality
  holds. The identity then simply follows from the observation of
  \eqref{eq:ugyanaz} that $C_e[(K,p,j),E]=C_e[(K,p+j+1,-1),E]$ and
  from the fact that $P,R$ and $C_e$ all preserve Maslov gradings.
  Now $H=H_e$ furnishes the required chain homotopy between $R\circ P$
  and the identity.
  \end{proof}
  Notice that the chain homotopies found in
  Theorem~\ref{thm:CombBlowUp}  provide a further proof of
  Theorem~\ref{thm:leaflefuj}. Now, however, we would like to consider
  two new graphs (with unframed vertices in them): let $\Gamma _{v_0}$
  be the graph we get from $G$ by attaching a new vertex $v_0$ and a
  new edge connecting $v$ and $v_0$ to it.  Similarly, $\Gamma
  _{v_0'}'$ is constructed from $G'$ by adding a new vertex $v_0'$ and
  an edge connecting $v_0'$ and $e$.  (Alternatively, $\Gamma
  '_{v_0'}$ can be given by blowing up the egde of $\Gamma _{v_0}$
  connecting $v$ and $v_0$.)  Our next goal is to prove

\begin{thm}\label{thm:elfelf}
  Suppose that $v$ is a good vertex of $G$, that is, for the framing
  $m_v$ and valency $d_v$ of $v$ we have $m_v+d_v\leq 0$.  Then the
  filtered lattice chain complex $(\CFmComb (G), A_{v_0})$ of $\Gamma
  _{v_0}$ is filtered chain homotopic to the filtered lattice chain
  complex $(\CFmComb (G'), A_{v_0'})$ of $\Gamma _{v_0'}'$.
\end{thm}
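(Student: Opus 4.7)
The plan is to upgrade the chain homotopy equivalence $(P, R, H_e)$ of Theorem~\ref{thm:CombBlowUp} between $\CFmComb(G)$ and $\CFmComb(G')$ to a \emph{filtered} chain homotopy equivalence with respect to the Alexander filtrations coming from $v_0\in\Gamma_{v_0}$ and $v_0'\in\Gamma'_{v_0'}$. Once this is accomplished, Theorem~\ref{thm:elfelf} follows immediately.

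First I would write down the rational classes $\Sigma$ and $\Sigma'$ of Equation~\eqref{eq:DefSigma} attached to $v_0$ and $v_0'$ and record a precise dictionary between them. Since $G-v$ is common to both graphs, the linear system defining $\Sigma,\Sigma'$ forces their multiplicities to agree on every vertex of $G-v$; the remaining equations at $v$ (and, for $\Sigma'$, at $e$, using $e^2=-1$ and that the framing of $v$ has dropped by one) show that the multiplicity of $v$ is the same for both and that the multiplicity of $e$ in $\Sigma'$ is one greater than that of $v$ in $\Sigma$. Together with the formula of Lemma~\ref{lem:letezik} for the canonical extension $L$ of a generator, this dictionary gives an explicit comparison between $A_{v_0}([(K,p+j),E])$ and $A_{v_0'}([(K,p,j),E])$ whenever $e\not\in E$.

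Second, using this dictionary I would check by direct computation that both the blow-down map $P$ and the auxiliary map $Q$ in the proof of Theorem~\ref{thm:CombBlowUp} are filtered: for every generator $x$ with $e\not\in E$, one has $A_{v_0}(P(x))=A_{v_0'}(x)$ and $A_{v_0'}(Q(y))=A_{v_0}(y)$. This reduces, via the explicit formulas for $P$ and $Q$, to comparing the canonical extensions produced by Lemma~\ref{lem:letezik} on the two graphs, a verification entirely analogous to the Maslov-grading check done in the proof of Lemma~\ref{lemma:PisChain}.

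Third, and this is the heart of the argument, I would show that the chain homotopy $H_e$ used to prove Theorem~\ref{thm:contraction} does not increase the Alexander filtration on $\CFmComb(G')$. Each summand of $H_e[(K,p,j),E]$ is of the form $U^{s_i}\otimes [(K,p,j)+2iv^*,\, E\cup v]$ (or the analogous $r_i$-summand in the negative case), and the exponents $s_i$ were designed so that the Maslov grading of each summand exceeds that of $[(K,p,j),E]$ by one. A parallel calculation, using the Alexander-grading identities~\eqref{eq:alexkul} and~\eqref{eq:alexkulmasodik} in place of the Maslov-grading identities, shows that the Alexander grading of each summand is bounded above by $A_{v_0'}([(K,p,j),E])$. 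Once $H_e$ is known to be filtered, the identity $C_e = \Id + \partial H_e + H_e\partial$ shows that $C_e$ is filtered, hence so is $R = C_e\circ Q$; combining this with the first two steps yields the desired filtered chain homotopy equivalence.

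The main obstacle will be the last step. The case analysis defining $H_e$ (positive versus negative $t$, and type-$a$ versus type-$b$ generators) propagates into several parallel subcases in the Alexander computation, each of which must be tracked in lockstep with the $U$-powers $s_i,r_i$ appearing in $H_e$. Because the Alexander grading changes in each elementary step by the quantity governed by $a_v$ or $b_v$ twisted by $(v_0')^*$, however, the bookkeeping goes through in close analogy with the proof of Theorem~\ref{thm:contraction}, so the filtered refinement of Theorem~\ref{thm:CombBlowUp} goes through and establishes Theorem~\ref{thm:elfelf}.
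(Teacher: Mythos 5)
Your overall strategy---upgrade the equivalence of Theorem~\ref{thm:CombBlowUp} to a filtered one after pinning down the relation between $\Sigma$ and $\Sigma'$---is the right one, and your dictionary in the first step (the multiplicity of $v$ is unchanged, the multiplicity of $e$ is that of $v$ plus one, so $(\Sigma')^2=\Sigma^2+1$) is exactly what is needed. But your second step asserts two equalities that are false. The blow-down map $P$ does \emph{not} preserve the Alexander grading: already for $e,v\not\in E$ one computes $A_{v_0'}([(K,p,j),E])-A_{v_0}(P[(K,p,j),E])=\frac{j+1}{2}+\frac{j^2-1}{8}=\frac{(j+1)(j+3)}{8}$, which is strictly positive for every $j\geq 1$; only the inequality (non-increase) can be hoped for, and for generators with $v\in E$ even that requires a case analysis involving $L_{[(K,p+j),E]}(v_0)\in\{0,-2\}$ which your outline does not supply. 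This is precisely why the proof in the paper works with $C_v\circ P$ rather than $P$, where $C_v$ is the contraction at $v$: since $v$ is good, $C_v$ annihilates every generator whose set contains $v$, so filteredness only has to be checked when $v,e\not\in E$. Note that this is where the hypothesis $m_v+d_v\leq 0$ enters the argument; your proposal never uses it, which is a warning sign.

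The more serious gap is in your route to the filteredness of $R$. The map $Q$ is \emph{not} filtered, so you cannot deduce that $R=C_e\circ Q$ is filtered by treating $Q$ and $H_e$ (equivalently $C_e$) separately. Indeed, if every set realizing $g[(K,p),E]$ contains $v$, then $L_{[(K,p),E]}(v_0)=-2$, while the term $[(K,p+1,-1),E]$ of $Q[(K,p),E]$ has $v_0'$-extension equal to $0$ (because $e\not\in E$), so its Alexander grading is $A_{v_0}([(K,p),E])+1$. Filteredness holds only for the composite: in this bad case $[(K,p+1,-1),E]$ is of type-$a$, so $C_e$ trades it for $[(K,p+3,-3),E]$, whose grading is one lower, together with terms whose sets contain $e$ and whose $v_0'$-extension is $-2$; this compensation, plus the extra term $U^x\otimes[(K,p+1,-1)+2v^*+2e^*,E\cup e-v]$ that appears when $w=v$, is exactly the content of Proposition~\ref{prop:respr} and is what your ``parallel bookkeeping'' glosses over. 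Your third step is fine as stated---it is the proposition preceding Proposition~\ref{prop:respcp} applied to the good leaf $e$ of $G'$ with $v_0'$ attached to it---but without a direct proof that $R$ does not increase the Alexander filtration, and a correct treatment of $P$ (or of $C_v\circ P$) on generators containing $v$, the argument does not close.
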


Before turning to the proof of this result, we show that (compositions of) the maps
introduced earlier are, in fact, filtered maps.
\begin{prop}
Suppose that $v$ is a good vertex of a plumbing graph $G=\Gamma _{v_0}-v_0$ and $v_0$ 
is connected only to $v$. Then
the map $C_v$ is a  filtered chain map, which is filtered chain 
homotopic to the identity.
\end{prop}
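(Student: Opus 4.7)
The chain-map property of $C_v$ and the existence of a chain homotopy from $C_v$ to $\Id$ are already built into the construction: the proof of Theorem~\ref{thm:contraction} exhibits an explicit map $H=H_v$ satisfying $C_v=\Id+\partial H+H\partial$, which simultaneously establishes that $C_v$ is a chain map and that $H$ chain-homotops $C_v$ to the identity. Since $\partial$ is already filtered by Lemma~\ref{lem:filtralt}, to promote both statements to the filtered category it suffices to show that $H$ itself is filtered: then $C_v-\Id=\partial H+H\partial$ is a composition of filtered maps (hence filtered), so $C_v$ is filtered, and $H$ itself serves as the filtered chain homotopy.

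To verify that $H$ is filtered I would inspect its nonzero summands. In the regime $v\notin E$ and $K(v)=q+2tk$ with $q\in[(T-2)k,Tk)$ and $t>0$, $H[K,E]$ is a finite sum of terms $U^{s_i}\otimes[K+2iv^*,E\cup v]$ with $s_i$ a cumulative sum of $b_v$'s, and the filtered condition rearranges as
\[
A([K+2iv^*,E\cup v])-A([K,E])\leq s_i.
\]
The plan is to establish this by telescoping: write the left-hand side as the sum of $A([K+2iv^*,E\cup v])-A([K+2iv^*,E])$ and $\sum_{j=0}^{i-1}\bigl(A([K+2(j+1)v^*,E])-A([K+2jv^*,E])\bigr)$. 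The first piece is a direct application of Equation~\eqref{eq:alexkul} (with $K$ replaced by $K+2iv^*$), and each single-step increment in the telescope is to be computed from the defining formula for the Alexander grading in Lemma~\ref{lem:letezik}. The hypothesis that $v_0$ is connected only to $v$ enters here decisively: it forces $v^*(\Sigma)=v\cdot\Sigma=0$ and $v_0\cdot u=\delta_{u,v}$ for $u\in\Vertices(G)$, so that shifting $K$ by $2v^*$ alters $L(v_0)$ and $\Sigma^2$ in a tightly controlled way. The telescoping sum is engineered to match the cumulative definition of $s_i$ term-for-term. The $t<0$ regime is handled symmetrically, using Equation~\eqref{eq:alexkulmasodik} and the cumulative $a_v$-sum $r_i$.

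The goodness hypothesis on $v$ enters only through Lemma~\ref{lem:ehatok}, which ensures that $a_v$ vanishes for $K(v)\geq k$ and $b_v$ vanishes for $K(v)\leq -k$; this is what makes the truncation of the sum at $i=t-1$ in the definition of $H$ land exactly where the telescoped bound becomes trivial. The main technical obstacle is the single-step identity $A([K+2(j+1)v^*,E])-A([K+2jv^*,E])=b_v[K+2jv^*,E\cup v]$ for $v\notin E$: this is an elementary but somewhat intricate calculation from Lemma~\ref{lem:letezik}, and it is the sole point at which the geometric hypothesis $v\cdot v_0=1$ is genuinely used. Everything else is formal bookkeeping.
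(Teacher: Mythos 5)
Your overall strategy is legitimate and close in spirit to the paper's: the paper also reduces everything to showing that a contracting homotopy respects the Alexander filtration (it works with the one-step map $H_0$ and iterates $C_0$, rather than with the full homotopy $H=H_v$ of Theorem~\ref{thm:contraction} as you do), and your formal reduction ``$H$ filtered $\Rightarrow$ $C_v=\Id+\partial H+H\partial$ filtered and filtered-homotopic to $\Id$'' is fine, given Lemma~\ref{lem:filtralt}. The problem is the step you yourself identify as the crux. The single-step identity you propose,
\[
A([K+2(j+1)v^*,E])-A([K+2jv^*,E])=b_v[K+2jv^*,E\cup v]\qquad (v\notin E),
\]
is false. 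When $v\notin E$ and $v_0$ is attached only to $v$, the extension of Lemma~\ref{lem:letezik} satisfies $L_{[K,E]}(v_0)=0$ (with the convention $v_0^2=0$), and since $v^*(\Sigma)=0$ while $v\cdot v_0=1$, one gets $A([K+2v^*,E])=A([K,E])-1$ \emph{identically} --- this is exactly the computation the paper performs at the end of its own argument. So each telescoped increment is $-1$, independent of $b_v\geq 0$, and your claim that the telescoping ``matches the cumulative definition of $s_i$ term-for-term'' cannot be correct; the desired inequality $A([K+2iv^*,E\cup v])-A([K,E])\leq s_i$, when true, holds with slack and for different reasons.

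Concretely, with the correct per-step shift the $t>0$ regime becomes trivial (the left-hand side is $\leq 0\leq s_i$), but the $t<0$ regime does not follow from what you have written: there the telescope contributes $+(i+1)$, the correction $A([K',E\cup v])-A([K',E])$ lies in $\{-1,0\}$ (this is where \eqref{eq:alexkul} plus the hypothesis that $v_0$ meets only $v$ enters, via $L_{[K',E\cup v]}(v_0)\in\{0,-2\}$), and one must show both that this correction is $-1$ at the relevant classes and that every increment $a_v[K-2jv^*,E\cup v]$ occurring in $r_i$ is at least $1$. The latter uses the goodness of $v$ in a quantitative form going slightly beyond the statement of Lemma~\ref{lem:ehatok} (one needs $B_v<A_v$ strictly when $K(v)<-k$, which follows from $m_v+d_v\leq 0$ and parity), and the type-$a$/type-$b$ dichotomy at the terminal index. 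So your localization of the hypotheses (``goodness only through Lemma~\ref{lem:ehatok}'', ``$v\cdot v_0=1$ only in the single-step identity'') misplaces where they actually do work, and the proof as proposed does not go through. A corrected version of your route is viable, but it requires redoing the key estimate along the lines above; alternatively, the paper's route --- proving that $H_0$ \emph{preserves} the Alexander grading exactly, by the case analysis $K(v)\geq k$, $K(v)<-k$, $K(v)\in[-k,k)$ with the type analysis and the computation of $L(v_0)$, and then iterating $C_0$ --- avoids having to control the accumulated $U$-powers $s_i,r_i$ altogether.
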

\begin{proof}
We will show that the map $C_0$ is a filtered chain map, chain homotopic
to the identity --- 
obviously by iteration both statements of the proposition follow from this result.
In turn, to show the statement for $C_0$, we only need to 
show that the homotopy $H_0$ respects the Alexander filtration.
We claim that the Alexander gradings of $[K,E]$ (with $v\not\in E$) and 
$H_0[K,E]$ (when this latter term is nonzero) are equal. 

Assume  therefore that $v\not \in E$ and $K(v)\geq k$.
In this case by Lemma~\ref{lem:ehatok} both $a_v[K,E\cup v]$ and 
$a_v[K+2v_0^*,E\cup v]$ are zero. The first fact is used in the definition of the
contraction, while the second one shows (by Lemma~\ref{lem:filtralt}) that the difference
of the  Alexander gradings of $[K,E]$ and of $[K,E\cup v]$ 
 is zero, concluding the argument in this case. 
Similarly, if $[K,E]$ satisfies $K(v)<-k$ then $(K+2v_0^*)(v)\leq -k$, hence again
both $b_v[K,E\cup v]$ and $b_v[K+2v^*_0, E\cup v]$ vanish, providing the same conclusion.

Assume now that $K(v)\in [-k,k)$. First we show that if $[K,E]$ is of type-$a$ 
(that is, $a_v[K,E\cup v]=0$) then the
extension $L_{[K,E\cup v]}$ (provided by Lemma~\ref{lem:letezik}) on $v_0$ vanishes.
Indeed, $a_v[K,E\cup v]=0$ means that the minimum $g[K,E\cup v]$ is attained by a set 
$I\subset E\cup v$ which does not contain $v$. For this set
$f(K,I)=f(K+2v_0^*,I)$ (since $v$ is the only vertex connected to $v_0$), therefore
$g[K,E\cup v ]=g[K+2v_0^*,E\cup v]$, implying that the extension 
$L_{[K,E\cup v]}$ is zero. This fact then shows that $A([K,E])=A([K,E\cup v])=A(H_0[K,E])$.

Suppose now that $a_v[K,E\cup v]>0$ (that is, $[K,E]$ is of type-$b$). Then 
obviously $b_v[K,E\cup v]=0$ and we wish to show that 
$A[K+2v^*, E]=A[K,E\cup v]$. First we show that 
$A[K,E\cup v]=A[K,E]-1$. Indeed, the parts of the definition of the Alexander grading 
involving $K$ and $\Sigma$ are the same for both. The extension $L_{[K,E]}(v_0)$ is obviously
zero since $v\not\in E$, hence we only need to check that $L_{[K,E\cup v]}=-2$. The assumption
$a_v[K,E\cup v]>0$ then implies that if $f(K,I)=g[K,E\cup v]$ for some
$I\subset E\cup v$ then $v\in I$. Therefore $g[K+2v_0^*,E\cup v]=g[K,E\cup v]+1$, hence the
claim follows. Now our computation will be complete once we show that 
$A([K+2v^*, E])=A([K,E])-1$ once $v\not \in E$. This equation easily follows from the fact that
the extension $L$ in both cases vanishes on $v_0$, 
while $(K+2v^*)(\Sigma -v_0)=K(\Sigma -v_0)+
2v^*(\Sigma) -2v\cdot v_0=K(\Sigma -v_0)-2$.
\end{proof}

\begin{prop}\label{prop:respcp}
The map $C\circ P$ is a filtered chain map.
\end{prop}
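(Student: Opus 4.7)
By the previous proposition, the contraction map $C=C_v$ is a filtered chain map from $(\CFmComb(G),A)$ to itself, and since a composition of filtered chain maps is again filtered, it is enough to show that the blow-down map $P\colon\CFmComb(G')\to\CFmComb(G)$ is itself filtered, when $\CFmComb(G')$ carries the Alexander filtration $A'$ induced by $v_0'\in\Gamma'_{v_0'}$ (attached to $e$) and $\CFmComb(G)$ carries the Alexander filtration $A$ induced by $v_0\in\Gamma_{v_0}$ (attached to $v$). When $e\in E$ the map $P$ vanishes and there is nothing to check, so the plan is to focus on $e\notin E$, where $P[(K,p,j),E]=U^s\otimes [(K,p+j),E]$, and the filtration condition reduces to the inequality
\[
A([(K,p+j),E])-s\leq A'([(K,p,j),E]).
\]

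The first key step is to compute the difference $A-A'$ explicitly. Normalizing $v_0^2=(v_0')^2=0$, write $\Sigma=v_0+\sum_{w\in V(G)}a_w w$ and $\Sigma'=v_0'+b_e e+\sum_{w\in V(G)}b_w w$; the orthogonality conditions of Equation~\eqref{eq:DefSigma}, together with $v^2_{G'}=v^2_G-1$, force $b_w=a_w$ for every $w\in V(G)$ and $b_e=a_v+1$, and hence $(\Sigma')^2=\Sigma^2+1$. Expanding the defining formula for the Alexander grading and applying Lemma~\ref{lem:letezik} to express the extensions, one would arrive at the identity
\[
2(A-A')=L(v_0)-L'(v_0')-j-1.
\]

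The second key step relates $f$-values across the blow-up. Because the only change in the intersection form between $G$ and $G'$ is $v^2_{G'}=v^2_G-1$, a direct calculation shows that for any $I\subset V(G)$ (so $e\notin I$),
\[
f_{G'}([(K,p,j),I])=f_G([(K,p+j),I])-\tfrac{j+1}{2}\,\mathbf{1}_{v\in I}.
\]
Substituting this into the formulas $L(v_0)=2g[(K,p+j),E]-2g[(K,p+j+2),E]$ and $L'(v_0')=2g[(K,p,j),E]-2g[(K,p,j+2),E]$ furnished by Lemma~\ref{lem:letezik}, and combining with the definition of $s$, the desired inequality $A-s\leq A'$ collapses to the single numerical bound
\[
g[(K,p,j+2),E]-g[(K,p+j+2),E]\leq \tfrac{(j+1)(j+3)}{8}.
\]

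The main obstacle will be verifying this last inequality, and I plan to handle it by a case analysis. Splitting $g[-,E]=\min(g^0,g^1)$ according to whether the minimum over $I$ is attained with $v\notin I$ or $v\in I$, and using the $f$-comparison above, the difference on the left-hand side will lie in the interval $[-\tfrac{j+3}{2},0]$ when $j\geq -1$ and in $[0,-\tfrac{j+3}{2}]$ when $j\leq -3$. A straightforward check that this interval is contained in $(-\infty,(j+1)(j+3)/8]$ for every odd integer $j$ (the extremal case being $j=-5$, where the bound is sharp) will conclude the argument; combined with the filteredness of $C$ established in the previous proposition, this shows that $C\circ P$ is a filtered chain map.
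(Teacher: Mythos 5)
Your proposal is correct, but it takes a genuinely different (and stronger) route than the paper. The paper never tries to show that $P$ alone is filtered: it first observes that $C\circ P$ annihilates every generator with $v\in E$ or $e\in E$ (by Theorem~\ref{thm:contraction} and the definition of $P$), so only generators with $e,v\not\in E$ need to be examined; for those both extensions vanish, $L(v_0)=L'(v_0')=0$, the exponent is simply $s=\frac{j^2-1}{8}$, and the filtration inequality collapses to $j+1\geq -\frac{j^2-1}{4}$, which holds for every odd $j$. You instead prove the stronger assertion that the blow-down map $P$ itself respects the Alexander filtrations on all generators with $e\not\in E$, including those with $v\in E$, where the extensions no longer vanish. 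Your intermediate steps check out: the identity $2(A-A')=L(v_0)-L'(v_0')-j-1$; the blow-up comparison of $f$-values, which shifts $f$ by $\frac{j+1}{2}$ exactly when $v\in I$ (and must be applied with $j$ replaced by $j+2$, whence the $\frac{j+3}{2}$ appearing in your interval bounds); and the reduction to $g_{G'}[(K,p,j+2),E]-g_{G}[(K,p+j+2),E]\leq \frac{(j+1)(j+3)}{8}$, which for odd $j\leq -3$ amounts to $(j+3)(j+5)\geq 0$. What your route buys is the sharper statement that $P$ is a filtered chain map on its own, with no appeal to the vanishing of $C$ on generators containing $v$; what the paper's route buys is brevity, since after discarding the annihilated generators the verification is a one-line numerical inequality. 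In a final write-up you should make explicit that in your reduced inequality the two minimal weights are computed in $G'$ and in $G$ respectively, and that the goodness of $v$ is still needed (exactly as in the paper) for the filteredness of $C$ invoked in your first sentence.
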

\begin{proof}
  Obviously both maps are chain maps, hence we only need to show that
  the composition of the two maps does not increase the Alexander
  filtartions.  If $e$ or $v$ is in $E$, then the
  composition maps $[K,E]$ to zero. Hence we only need to deal with
  those generators $[K,E]$ for which $e,v\not \in E$. We claim that
  for those elements $P$ does not increase the Alexander
  grading. (Since $C$ is a filtered map, this implies that so is
  $C\circ P$.)  Since $e,v\not \in E$, it follows that
  $P[(K,p,j),E]=U^s\otimes [(K,p+j),E]$, where (again, by $e,v\not \in
  E$) the term $s$ is equal to $\frac{j^2-1}{8}$. Since the extensions $L(v_0)$ in
  both cases are equal to 0 (with the choice $v_0^2=0$), the
  inequality
\[
A[(K,p,j),E]\geq -s+A[(K,p+j),E]
\]
is equivalent to $j+1\geq -\frac{j^2-1}{4}$, which obviously holds for
every odd integer $j$.
\end{proof}

\begin{prop}\label{prop:respr}
The map $R$ is a filtered chain map.
\end{prop}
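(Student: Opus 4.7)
The plan is to use $R=C_e\circ Q$ together with the fact that $C_e$ is a filtered chain map from the preceding proposition. The first observation is that when $v\in E$, the second summand of $Q[(K,p),E]$ has $e$ in its underlying set and is therefore killed by $C_e$ (Theorem~\ref{thm:contraction}), so in all cases $R[(K,p),E]=C_e[(K,p+1,-1),E]$, and it suffices to bound the Alexander gradings of the components of this element.

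Next I would set up the arithmetic comparison with the normalisation $v_0^2=v_0'{}^2=0$: from the explicit forms of $\Sigma_{v_0}$ and $\Sigma'$ (the new vertex $e$ appearing with multiplicity $1+a_v$ while $v$ retains coefficient $a_v$) one gets $\Sigma'{}^2-\Sigma_{v_0}^2=1$ and $L'(\Sigma')-L(\Sigma_{v_0})=L'(v_0')-L(v_0)-1$, hence
\[ A_{v_0'}([(K,p+1,-1),E])-A_{v_0}([(K,p),E])=\tfrac12\bigl(L'(v_0')-L(v_0)\bigr).\]
A direct computation of the $g$-values entering Lemma~\ref{lem:letezik} (exploiting that $v^2$ decreases by $1$ in $G'$ while the added $e$-coordinate compensates exactly) shows $L'(v_0')=L(v_0)=0$ whenever $v\notin E$; and when $v\in E$, that $L'(v_0')=0$ always, while $L(v_0)=0$ or $-2$ according as $a_v[(K,p),E]=0$ or $a_v[(K,p),E]>0$. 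In the three sub-cases with $L'(v_0')=L(v_0)$ the Alexander gradings of $[(K,p+1,-1),E]$ and $[(K,p),E]$ agree and filteredness of $C_e$ closes the argument.

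The remaining case $v\in E$, $a_v>0$ is the crux: here $A_{v_0'}([(K,p+1,-1),E])=A_{v_0}([(K,p),E])+1$, so $C_e$-filteredness alone is not enough. The plan is to compute $C_e$ explicitly on this element. Lemma~\ref{lem:ehatok}-style bookkeeping shows that $a_v>0$ forces $A_e=B_e$ at the critical shift $i_0=0$, so $[(K,p+1,-1),E]$ is of type-$a$, whence $H_e[(K,p+1,-1),E]=[(K,p+1,-1),E\cup e]$. Expanding $\partial H_e+H_e\partial$ and iterating $C_0$ to stabilization, the intermediate terms cancel in pairs except for two, yielding
\[ C_e[(K,p+1,-1),E]=[(K,p+3,-3),E]+[(K,p+1,-1),(E-v)\cup e].\]
A parallel arithmetic calculation, applied to each summand in the spirit of the one above, then verifies that both surviving terms have Alexander grading exactly $A_{v_0}([(K,p),E])$, completing the proof.

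The principal obstacle is this explicit contraction formula: tracking the $U$-powers produced by $\partial H_e$ and $H_e\partial$, classifying the types of all intermediate generators (whose $e$-values lie in the range $[-3,1]$ by the type analysis), and verifying the many pairwise cancellations is a computation of the same flavour as, but more elaborate than, the proof of Theorem~\ref{thm:contraction}. Two worked low-dimensional examples (with $G$ a single $(-2)$- or $(-3)$-framed vertex) confirm both the contraction formula and the final grading equalities, supporting the general case.
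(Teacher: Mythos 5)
Your skeleton agrees with the paper's: reduce to comparing the Alexander grading of $C_e[(K,p+1,-1),E]$ in $\Gamma'_{v_0'}$ with that of $[(K,p),E]$ in $\Gamma_{v_0}$, note that $\Sigma'{}^2=\Sigma^2+1$ and that the only problematic case is $v\in E$ with $a_v[(K,p),E]>0$ (equivalently $L_{[(K,p),E]}(v_0)=-2$), where the grading of $[(K,p+1,-1),E]$ jumps by $1$; your identification of that case with $[(K,p+1,-1),E]$ being of type-$a$, and your treatment of the easy cases via filteredness of $C_e$, are correct and match the paper. The gap is the closed formula on which you hang the crux case: $C_e[(K,p+1,-1),E]=[(K,p+3,-3),E]+[(K,p+1,-1),(E-v)\cup e]$ is false in general. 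Writing $C_e=\mathrm{Id}+\partial\circ H_e+H_e\circ\partial$, components with underlying set $E-w\cup e$ arise for \emph{every} $w\in E$, from $\partial_w H_e[(K,p+1,-1),E]$ against $H_e\partial_w[(K,p+1,-1),E]$; they cancel only when $[(K,p+1,-1),E-w]$ (respectively $[(K,p+1,-1)+2w^*,E-w]$) has the same type as $[(K,p+1,-1),E]$, and the hypothesis $a_v[(K,p),E]>0$ does not force $a_v[(K,p),E-w]>0$ for $w\neq v$, so surviving terms $[(K,p+1,-1),E-w\cup e]$ and $[(K,p+1,-1)+2w^*,E-w\cup e]$ with $w\neq v$ do occur. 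For $w=v$ there is in addition the possible component $U^x\,[(K,p+1,-1)+2v^*+2e^*,E-v\cup e]$ (when $[(K,p+1,-1)+2v^*,E-v]$ is of type-$a$ its $H_e$-image has two terms, since the value at $e$ becomes $+1$), while the term you do list, $[(K,p+1,-1),(E-v)\cup e]$, appears only when $[(K,p+1,-1),E-v]$ is of type-$b$. Your single-vertex test cases cannot detect any of this, because there $E\subseteq\{v\}$ and no $w\neq v$ exists.

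Since the list of surviving components is wrong, the concluding ``parallel arithmetic calculation'' does not establish filteredness. What is needed, and what the paper's proof supplies, is a uniform mechanism covering all these extra components: whenever a term with set $E-w\cup e$ survives, the relevant generator is of type-$b$, i.e. $a_e>0$ for it, so every set realizing its $g$-value contains $e$; hence the extension of Lemma~\ref{lem:letezik} takes the value $-2$ on $v_0'$ for that component, and this $-2$ exactly offsets the $+1$ coming from $\Sigma'{}^2=\Sigma^2+1$, giving Alexander grading at most $A([(K,p),E])$ (the exceptional $+2v^*+2e^*$ term is handled separately, via $2e^*(-v_0')=-2$). With this argument your explicit contraction formula becomes unnecessary; without it (or a substitute), the proof does not go through.
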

\begin{proof}
Once again, we already showed that $R$ is a chain map, hence we only
need to verify that it respects the Alexander filtrations: we need to
compare the Alexander grading of $[(K,p),E]$ and of
$C_e[(K,p+1,-1),E]$. Before giving the details of the argument, notice
that if $\Sigma = v_0+a_v\cdot v + \sum _{j=1}^n a_j \cdot v_j$ is the
homology class used in the definition of $A$ in $G$
(cf. Equation~\ref{eq:DefSigma}), then in $G'$ the corresponding
element is $\Sigma '=v'_0+(1+a_v)\cdot e+a_v\cdot v +\sum _{j=1}^n a_j
\cdot v_j$. Consequently $(\Sigma ')^2=\Sigma ^2 +1$.

By its definition, $L_{[(K,p),E]}(v_0)$ is either 0 or $-2$.
When $L_{[(K,p),E]}(v_0)=0$ ,
the proof of the claim is rather simple: in fact, $A([(K,p+1,-1),E])\leq 
A[(K,p),E])$, since the values  $(K,p)(\Sigma -v_0)+\Sigma ^2$ and
$(K,p,j)(\Sigma' -v_0)+(\Sigma ')^2$ coincide, we add 0 to the first and  the nonpositive term
$L_{[(K,p,j),E]}(v_0')$ to the second term. 

Suppose now that $L_{[(K,p),E]}(v_0)=-2$. By its definition this means
that
\[
g[(K,p),E]<g[(K,p+2),E], 
\]
therefore the set $I$ with $f(K,I)=g[(K,p),E]$ contains $v$ (and, in
particular, $E$ should contain $v$). We know that $g[(K,p),E]=g[(K,
  p+1,-1), E]$, but since $I$ (on which the minimum is taken) contains
$v$, we further deduce that $g[(K, p+1,-1), E]=g[(K,p+1,-1),E\cup e]$,
or equivalently $a_e[(K,p+1,-1),E\cup e]=0$.  This then means that
$[(K,p+1,-1),E]$ is of type-$a$, hence the image $C_e[(K,p+1,-1), E]$
has $[(K,p+3,-3),E]$ as the component having $E$ as the set.  Since
$A[(K,p+3,-3),E]=A[(K,p+1,-1),E]-1=A[(K,p),E]$ (this last equality
holding because $L_{[(K,p),E]}(v_0)=-2$), we see that this component
of $R[(K,p),E]$ has Alexander grading at most the Alexander grading of
$[(K,p),E]$.

We still need to examine the further components of
$C_e[(K,p+1,-1),E]$. These terms are all of the form $[L,E-w\cup e]$
for some $w\in E$. Assume first that $w\neq v$, that is, $w$ and $e$
is not connected by an edge. These terms come from the parts of $C_e$
given by
\[
\partial _w [(K,p+1,-1), E\cup e] \quad {\mbox {and}}\quad
H\circ\partial _w [(K,p+1,-1),E] .
\]
The contributions of these terms depend on the fact whether
$[(K,p+1,-1),E-w]$ (and similarly, $[(K,p+1,-1)+2w^*, E-w]$) is of
type-$a$ or of type-$b$. If the term is of type-$a$, then the
contributions cancel. On the other hand, if the element
$[(K,p+1,-1),E-w]$ is of type-$b$, then we will have a contribution of
the form $U^{a_{w}}[(K,p+1,-1),E-w\cup e]$ in $C_e$. Since it is of
type-$b$, when computing $g([(K,p+1,-1), E-w\cup e]$ the minimum is
taken on a set $I\subset E-w\cup e$ containing $e$. This means
that the value of the extension $L$ on $v_0'$ is $-2$ in this case, ultimately showing
that the Alexander grading of this term is not greater than
$A[(K,p),E]$.  The same argument works for the terms of the  shape 
$U^b[(K,p+1,-1)+2w^*, E-w\cup e]$.

When $w=v$, a further term appears. The argument of showing the
decrease of the Alexander grading proceeds roughly as it is explained
above.  In particular, $\partial _v[(K,p+1,-1), E\cup e]= U^a
[(K,p+1,-1),E\cup e -v]+U^b [(K,p+1,-1)+2v^*, E\cup e-v]$, while
similar terms appear as $H(U^{a'}[(K,p+1,-1), E-v])$ and
$H(U^{b'}[(K,p+1,-1)+2v^*, E-v])$.  The actual values of these two
terms depend on the types of the generators.  If $[(K,p+1,-1),E-v]$ is
of type-$a$, then its $H$-image cancels $U^a[(K,p+1-1), E\cup e-v]$.
If, however, $[(K,p+1,-1),E-v]$ is of type-$b$, that is,
$a_e[(K,p+1,-1),E\cup e-v]>0$, then $U^a[(K,p+1-1), E\cup e-v]$
survives. In this case the extension of the cohomology class to $v_0$
is $-2$, since the property $a_e[(K,p+1,-1),E\cup e-v]>0$ shows that
the minimum giving $g[(K,p+1,-1), E\cup e-v]$ is taken on a set which
contains $e$.  We also need to address the possible cancellation of
the terms involving the cohomology classes of the form
$(K,p+1,-1)+2v^*$. If $[(K,p+1,-1)+2v^*, E-v]$ is of type-$b$, then
these  terms cancel. On the other hand, if this generator is
of type-$a$, then we see a new phenomenon, since in this case its
$H$-image involves two terms, one of them being cancelled by the
relevant part of $\partial _v [(K,p+1,-1), E\cup e]$, but the other
one must be delt with.

Hence we need to examine the term $U^x[(K,p+1, -1)+2v^*+2e^*, E\cup
e-v]$ where $x=b_v[(K,p+1,-1), E]+b_e[(K, p+1, -1)+2v^*, E\cup e]$ in
the case when $a_e[(K,p+1,-1)+2v^*+2e^*, E\cup e-v]=0$. In this case,
however, the Alexander grading is clearly not more than $A([(K,p),E]$:
by evaluating $(K,p+1, -1)+2v^*+2e*$ on $\Sigma ' -v_0$ we get
$(K,p)(\Sigma -v_0)-2$ (from $2e^*(-v_0)$), hence the $-2$ appearing
in the Alexander grading of $[(K,p), E]$ is compensated by this $-2$,
eventually showing that $R$ does not increase the Alexander grading.
This last observation concludes the proof.
\end{proof}

\begin{proof}[Proof of Theorem~\ref{thm:elfelf}]
The maps $R$ and $C\circ P$ provide the homotopy equivalences.
Since by Propositions~\ref{prop:respcp} and \ref{prop:respr}  
these maps respect the Alexander filtrations,
we only need to show that the two compositions are filtered chain
homotopic to the respective identities. Since $P\circ R$ is the identity map
and $C$ is filtered chain homotopic to the identity,
it follows that $C\circ P\circ R =C$  has this property.

The filtered chain homotopy between $R\circ C \circ P$ and $\Id$
can be constructed as follows:
\[
R\circ C \circ P=R\circ (Id +\partial \circ H +H\circ \partial ) \circ P=
R\circ P+ \partial \circ (RHP)+(RHP)\circ \partial .
\]
Since $R\circ P$ is equal to $C_e$, and $C_e$ is filtered chain homotopic to the identity,
we only need to check that the composition $RHP=R\circ H\circ P$ is a filtered map.
If $v$ or $e$ is in $E$, then the image of this triple composition on $[K,E]$ is 
zero. Otherwise $P$ is a filtered map  on the elements $[K,E]$ with $v\not \in E$
(as it was shown in the proof of Proposition~\ref{prop:respcp}) . 
The map $H$ also respects the Alexander filtration,
and so does $R$ (as we showed in Proposition~\ref{prop:respr}), concluding the proof.
\end{proof}

Now we are in the position of proving that lattice homology of a
negative definite tree $G$ remains unchanged when we blow up an edge
$D$ of $G$.   Let $G'$ denote the graph we get by blowing up the
edge $D$.

\begin{thm}\label{thm:veglefujj}
The lattice homologies $\HFmComb (G)$ and $\HFmComb (G')$ are
isomorphic.
\end{thm}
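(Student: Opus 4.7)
The plan is to derive Theorem~\ref{thm:veglefujj} from Theorem~\ref{thm:elfelf} together with the surgery formula of Theorem~\ref{thm:mutet}: if we strip the framing from one endpoint of the edge $D$ to be blown up, then the edge blow-up of $D$ becomes precisely the vertex blow-up considered in Theorem~\ref{thm:elfelf}, and the surgery formula lets us pass back from this filtered statement to ordinary lattice homology. Since Theorem~\ref{thm:elfelf} requires that the remaining endpoint of $D$ be a good vertex, the first step is to arrange this.

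First I would apply the double blow-up trick discussed in Remark~\ref{rem:tobbblow} near one endpoint $v_1$ of $D$: repeatedly attaching a new $(-1)$-framed leaf to $v_1$ and then blowing up the resulting edge lowers $m_{v_1}$ by any prescribed amount while increasing $d_{v_1}$ by $1$, and thus forces $v_1$ to become a good vertex. By Remark~\ref{rem:tobbblow} these modifications preserve lattice homology, and since they are localized at $v_1$ (away from $D$) they can be performed identically in $G$ and in its edge blow-up $G'$, producing new graphs $\widetilde G,\widetilde G'$ still related by the blow-up of the edge $D$ and satisfying $\HFmComb(\widetilde G)\cong\HFmComb(G)$, $\HFmComb(\widetilde G')\cong\HFmComb(G')$. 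Replacing $G,G'$ by $\widetilde G,\widetilde G'$, we may henceforth assume that $v_1$ is good in $G$.

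Next, let $v_0:=v_2$ denote the other endpoint of $D$, strip its framing to form $\Gamma_{v_0}$, and form $\Gamma'_{v_0'}$ analogously from $G'$. By construction $\Gamma'_{v_0'}$ is precisely the graph obtained from $\Gamma_{v_0}$ by blowing up the edge joining $v_1$ and $v_0$, which matches the setup of Theorem~\ref{thm:elfelf} with $v=v_1$. That theorem therefore provides a filtered chain homotopy equivalence between $(\CFmComb(G-v_0),A_{v_0})$ and $(\CFmComb(G'-v_0'),A_{v_0'})$.

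Finally I invoke Theorem~\ref{thm:mutet}: attaching the framing $m_{v_0}$ to $v_0$ in $\Gamma_{v_0}$ recovers $G$, while attaching the framing $m_{v_0}-1$ to $v_0'$ in $\Gamma'_{v_0'}$ recovers $G'$ (the $-1$ coming from the effect of the edge blow-up on the framings of its endpoints), and both of these graphs are negative definite by hypothesis. The mapping-cone description underlying Theorem~\ref{thm:mutet} then converts the filtered chain equivalence into the desired isomorphism $\HFmComb(G)\cong\HFmComb(G')$. The main obstacle will be the bookkeeping in this last step: one must verify that the chain maps $P$, $R$ and $C=C_{v_1}$ from the proof of Theorem~\ref{thm:elfelf} respect not only the two filtrations but also the map $N$ of Definition~\ref{def:mapn} on each side, after accounting for the shift of $v_0^*(\Sigma)$ induced by the edge blow-up, so that the surgery mapping cones for $\Gamma_{v_0}$ at framing $m_{v_0}$ and for $\Gamma'_{v_0'}$ at framing $m_{v_0}-1$ are genuinely identified.
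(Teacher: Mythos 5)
Your overall strategy is the paper's: use the double blow-up trick of Remark~\ref{rem:tobbblow} to make one endpoint of $D$ good, then feed Theorem~\ref{thm:elfelf} into the surgery formula, keeping track of the unit shift in framings and in $v_0^*(\Sigma)$. However, there is a genuine gap in your middle step. Theorem~\ref{thm:elfelf} is stated (and proved) only for the configuration in which the unframed vertex is a \emph{newly attached leaf}: $\Gamma_{v_0}$ is a framed graph $G$ with a new vertex $v_0$ joined by a single edge to the good vertex $v$, and $\Gamma'_{v_0'}$ has $v_0'$ joined only to the exceptional vertex $e$. The proofs of the filteredness statements behind it use this repeatedly — e.g.\ the proposition that $C_v$ is a filtered chain map assumes ``$v_0$ is connected only to $v$'' (so that $f(K,I)=f(K+2v_0^*,I)$ for sets $I$ avoiding $v$), and the computations of the extensions $L(v_0)$, $L(v_0')$ in Propositions~\ref{prop:respcp} and \ref{prop:respr} likewise exploit that the unframed vertex has valency one. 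In your construction the unframed vertex is the \emph{existing} endpoint $v_2$ of $D$, which in general has further neighbors in the rest of the graph, so the hypotheses of Theorem~\ref{thm:elfelf} are not met and you cannot cite it directly for the filtered equivalence between $(\CFmComb(G-v_2),A_{v_2})$ and $(\CFmComb(G'-v_2),A_{v_2})$.

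The missing ingredient is the connected sum formula for the Alexander filtration (Theorem~\ref{thm:alexosszead} and the discussion of master complexes): delete $D$ so that the tree splits into $G^1\ni v_1$ and $G^2\ni v_2$; apply Theorem~\ref{thm:elfelf} to $G^1$ with a \emph{new} unframed leaf $v_0$ attached to the good vertex $v_1$ (this is now literally its setting), and separately strip the framing of $v_2$ in $G^2$ to get $\Gamma_{w_0}$; then take connected sums at $v_0=w_0$. Since the filtered complex of a connected sum is the tensor product of the filtered complexes of the pieces, the filtered equivalence on the $G^1$-side tensored with the identity on $\Gamma_{w_0}$ gives the filtered equivalence for the full graphs that your argument needs, and from there your final step (the mapping-cone comparison with framings $m_{v_2}$ versus $m_{v_2}-1$ and the shift $w_0^*(\Sigma)=w_0^*(\Sigma')+1$, exactly as in the proof of Theorem~\ref{thm:leaflefuj}) goes through. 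Note also that this last bookkeeping is not optional fine print: it is precisely how the filtered statement is converted into the isomorphism of lattice homologies, so it should be carried out rather than flagged as an obstacle.
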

\begin{proof} 
Suppose that after deleting the edge $D$ (connecting the
vertices $v, w_0\in \Vertices (G)$), the graph $G$ falls into two
components $G^1$ and $G^2$, where $G^1$ contains $v$ while $G^2$
contains $w_0$.  
  Suppose first that the edge $D$ connects two vertices such that at
  least one of them is good, so we can assume that $v$ is a good
  vertex.  Define $\Gamma _{w_0}$ by deleting the framing of
  $w_0$ in $G^2$.  The graph $\Gamma _{v_0}$ is defined by attaching a new
  vertex $v_0$ to $G^1$, together with an edge connecting $v$ and
  $v_0$. (The framing of $v$ in this graph is the same as its framing
  in $G$.)  Finally, we construct $\Gamma _{v_0'}'$ from $G^1$ as follows:
  we add two vertices ($e$ and $v_0'$) to it, together with an edge
  connecting $v$ and $e$ and a further edge connecting $e$ and
  $v_0'$. At the same time, we change the framing of $v$ to one less
  than it had in $G$, and attach the framing $(-1)$ to $e$. See
  Figure~\ref{fig:rajzban}.
\begin{figure}[ht]
\begin{center}
\epsfig{file=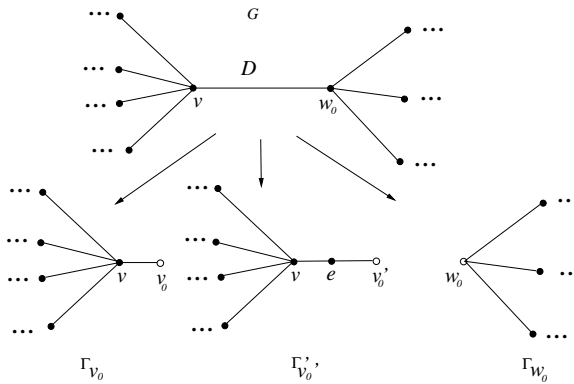, height=5cm}
\end{center}
\caption{{\bf Starting with the graph $G$ and the edge $D$ connecting
    vertices $v$ and $w_0$ we construct three further graphs.}  The
  framing of $v$ in $\Gamma _{v_0}$ is the same as in $G$ (while the
  vertex $v_0$ admits no framing). The framing of $v$ in $\Gamma _{v_0'}'$
  is one less than its framing in $G$, and $e$ is decorated by
  $(-1)$. The hollow circles refer to vertices which do not admit
  framings, hence symbolize knots in the background 3-manifolds.}
\label{fig:rajzban}
\end{figure}
It is easy to see that the connected sum of $\Gamma _{w_0}$ and
$\Gamma _{v_0}$, together with the appropriate framing on $v_0=w_0$
restores the graph $G$, while if we take the connected sum of $\Gamma
_{w_0}$ and $\Gamma _{v_0'}'$ (and attach framing one less to $v_0'$
than above) then we get the graph $G'$ which is constructed from $G$
by blowing up the edge $D$.

Let $\Delta _{w_0}$ denote the graph we get by taking the connected
sum of $\Gamma _{v_0}$ and $\Gamma _{w_0}$. Similarly, $\Delta
'_{w_0}$ is defined as the connected sum of $\Gamma _{v_0'}'$ and $\Gamma
_{w_0}$.  By Theorem~\ref{thm:elfelf} the filtered lattice chain
complexes of $\Gamma _{v_0}$ and of $\Gamma _{v_0'}'$ are filtered chain
homotopic, hence by the connected sum formula the filtered lattice
chain complexes of $\Delta _{w_0}$ and $\Delta '_{w_0}$ are filtered
chain homotopic.  As in the proof of Theorem~\ref{thm:leaflefuj}, we
notice that the homology classes $\Sigma$ and $\Sigma '$ (used in the
definitions of the Alexander filtrations for $\Delta _{w_0}$ and
$\Delta '_{w_0}$) are slightly different, with the property that
$w_0^*(\Sigma )=w_0^*(\Sigma ')+1$. As in the proof of
Theorem~\ref{thm:leaflefuj}, this identity and the homotopy
equivalence of the filtered lattice chain complexes of $\Delta _{w_0}$
and $\Delta '_{w_0}$  imply that the lattice homology of a graph
we get by attaching any (negative enough) framing to $w_0$ in $\Delta
_{w_0}$ is isomorphic to the lattice homology of the graph we get from
$\Delta '_{w_0}$ by attaching to $w_0$ a framing one less. This
exactly verifies the statement of the theorem under the hypothesis that
$v$ is a good vertex.

The general case can be reduced to the above situation by first
blowing up the vertex $v$, and then blowing up the new egde, and
repeating this two-step procedure until $v$ will be a good vertex of
the resulting graph.  According to Remark~\ref{rem:tobbblow} the
procedure does not change the lattice homology, while the above
argument shows that it remains unchanged when we blow up the edge
$D$. Finally the application of Remark~\ref{rem:tobbblow} again shows
that we can blow back down  the blow-ups we used to turn $v$ into a good
vertex.  This final step then concludes the proof of the theorem.
\end{proof}

\begin{proof}[Proof of Theorem~\ref{thm:indep}]
  Suppose that $G_1$ and $G_2$ are two negative definite plumbing
  graphs with the property that the associated 3-manifolds $Y_{G_1}$
  and $Y_{G_2}$ are diffeomorphic. By \cite{neumann} this implies that
  $G_2$ can be given from $G_1$ by a sequence of blowing up vertices,
  edges and adding disjoint $(-1)$-framed vertices, and the inverses
  of these operations.  Corollary~\ref{c:kulonegy} and
  Theorems~\ref{thm:leaflefuj} and \ref{thm:veglefujj} show that
  lattice homology is invariant under these moves, implying the
  existence of the claimed isomorphism between $\HFmComb (G_1)$ and
  $\HFmComb (G_2)$.
\end{proof}

\section{Appendix: finitely generated subcomplexes}
\label{app:contract}
In this section we show that the lattice homology $\HFmComb (G, \s )$
of a negative definite graph $G$ (with a fixed spin$^c$ structure
$\s$) is finitely generated as an $\Field [U]$-module.  (Once again,
this result was already verified by N\'emethi in \cite{lattice}.)
This finiteness result will easily follow from the corresponding
result for $\HFaComb (G, \s)$:

\begin{thm}\label{thm:fingenhat}
  Suppose that $G$ is a negative definite plumbing graph and $\s$ is a
  given spin$^c$ structure on $Y_G$. Then the homology group $\HFaComb
  (G, \s )$ is a finite dimensional $\Field$-vector space.
\end{thm}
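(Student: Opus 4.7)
My plan is to prove finite dimensionality of $\HFaComb(G,\s)$ by combining three ingredients: finite dimensionality of each Maslov-graded piece of the chain complex, an upper bound on the Maslov grading supporting $\CFaComb(G,\s)$, and vanishing of the homology for sufficiently negative Maslov grading. The final ingredient is where the real work lies.

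The first two ingredients follow directly from the Maslov grading formula
\[
\grad([K,E]) = 2g([K,E]) + |E| + \tfrac{1}{4}\bigl(K^2 + |\Vertices(G)|\bigr),
\]
combined with the elementary bounds $g([K,E]) \leq 0$ (since $f([K,\emptyset])=0$) and $0 \leq |E| \leq n := |\Vertices(G)|$. Solving for $K^2$ in terms of $q = \grad([K,E])$ gives $-K^2 \leq 5n - 4q$. Since $G$ is negative definite, $-K^2$ is a positive definite quadratic form on the affine lattice $\{K \in \Char(G) : \s_K = \s\}$, so only finitely many characteristic cohomology classes representing $\s$ can lie in the bounded region $\{-K^2 \leq 5n-4q\}$. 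This shows $\CFaComb_q(G,\s)$ is finite dimensional over $\F$ for every $q$. Applying the identity with $K^2 \leq 0$ and $g([K,E]) \leq 0$ yields $q \leq 5n/4$, so $\HFaComb_q(G,\s) = 0$ for $q$ above this threshold.

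The main obstacle is to establish vanishing of $\HFaComb_q(G,\s)$ for $q$ sufficiently negative, since a priori the term $g([K,E])$ in the grading formula can be arbitrarily negative as $K$ wanders off in the affine lattice. My approach is to use the contraction machinery developed in Section~\ref{app:contract}. By the blow-up invariance of lattice homology (Theorem~\ref{thm:leaflefuj}) one may replace $G$ by an auxiliary graph $\widetilde G$ in which every vertex is good, without changing the homology. For such $\widetilde G$, Lemma~\ref{lem:ehatok} guarantees that for each vertex $v$ of framing $-k_v$, the coefficient $a_v[K,E]$ vanishes whenever $K(v) \geq k_v$ and $b_v[K,E]$ vanishes whenever $K(v) \leq -k_v$. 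Using this, I would iterate the contraction operators $C_v$ from Theorem~\ref{thm:contraction}, combined with homotopies patterned on $H_0$, to produce a chain homotopy equivalence between $\CFaComb(\widetilde G,\s)$ and the subcomplex generated by those $[K,E]$ with $K$ lying in the bounded polytope $\Omega = \{K : -k_v \leq K(v) \leq k_v \text{ for all } v\}$. This subcomplex is finite dimensional because $\Omega$ contains only finitely many characteristic vectors representing $\s$, and hence $\HFaComb(G,\s) \cong \HFaComb(\widetilde G,\s)$ is finite dimensional.

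The subtle point is to arrange these contractions so that they genuinely truncate the range of each $K(v)$ rather than merely eliminate $v$ from $E$; this requires carefully ordered, vertex-by-vertex bookkeeping of the successive homotopies, since applying $C_{v_i}$ may alter the $K(v_j)$-content of the surviving generators for $j \neq i$. One attractive alternative to this bookkeeping would be to construct, for each direction $v$, a ``box truncation'' homotopy directly out of the formulae of Example~\ref{ex:minusz1} and Remark~\ref{rem:gk}, exploiting the fact that the complex restricted to a single coordinate direction outside the window $[-k_v, k_v]$ is chain homotopic (compatibly with the other coordinates) to its restriction to the boundary.
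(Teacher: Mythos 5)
Your overall strategy---truncating the complex to a finite box of characteristic vectors by contraction-type homotopies---is indeed the route the paper takes, but the reduction to an auxiliary graph $\widetilde G$ in which \emph{every} vertex is good is a genuine gap: that step would fail. The blow-up moves you invoke cannot produce such a graph once $G$ has a bad vertex: blowing up an edge always introduces a $(-1)$-framed valency-two vertex, which is bad, and the double blow-up trick of Remark~\ref{rem:tobbblow} that turns a chosen vertex good does so exactly at the price of creating such a new bad vertex (the paper says this explicitly). So no sequence of blow-ups makes all vertices good, and the existence of some other all-good negative definite presentation of $Y_G$ is a strong restriction on the 3-manifold that cannot be assumed for an arbitrary negative definite $G$. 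Fortunately the reduction is also unnecessary: the only input your truncation needs is a window outside of which $a_v$, respectively $b_v$, vanish, and this holds for an \emph{arbitrary} vertex of a negative definite graph with constants depending on $v^2$ and the valency $d_v$ (one has $a_v[K,E]=0$ once $K(v)\geq -v^2$, and $b_v[K,E]=0$ once $K(v)\leq v^2-2d_v$, by the same computation as in Lemma~\ref{lem:ehatok}); goodness is only needed to make the window symmetric, which matters for the filtered arguments of Section~\ref{sec:app}, not here. This arbitrary-vertex version is precisely the lemma with which Section~\ref{app:contract} begins.

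Beyond that, the part you defer as ``bookkeeping'' is where the real content lies. The stabilized contraction $C_v$ of Theorem~\ref{thm:contraction} only annihilates generators with $v\in E$; it does not by itself confine $K(v)$ to a window. What the paper uses is a single application, for each vertex, of $C_0^v=\Id+\partial\circ H_0^v+H_0^v\circ\partial$ (with $H_0^v$ cut off by the window above), together with the explicit computation of Lemma~\ref{lem:hogynezki} that the image of $C_0^v$ lies in the band $\vert K(v)\vert\leq N$, and the observation that the subsequent $C_0^{v_j}$ move $K$ only by bounded amounts, so the composite over all vertices lands in a finite box $B(N)$; since each $C_0^v$ is chain homotopic to the identity, its image is a subcomplex computing $\HFaComb(G,\s)$, which is then finite dimensional. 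Once this is in place, your first two ingredients (finite dimensionality of each Maslov-graded piece and the upper bound $q\leq 5n/4$, both of which are correct) are not needed. In summary: right general idea, but the all-good-vertex reduction must be discarded in favor of the arbitrary-vertex window lemma, and the truncation estimate you leave as a ``subtle point'' is the essential step that still has to be proved.
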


\begin{cor}\label{cor:fingenmin}
  The lattice homology $\HFmComb (G, \s )$ of a negative definite
  plumbing graph $G$ (equipped with a spin$^c$ structure $\s$) is a
  finitely generated $\Field [U]$-module.
\end{cor}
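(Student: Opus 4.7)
The plan is to deduce Corollary~\ref{cor:fingenmin} from Theorem~\ref{thm:fingenhat} by a graded Nakayama-type argument, in the spirit of the proof of Theorem~\ref{thm:structure}. Abbreviate $M=\HFmComb(G,\s)$. I would first invoke the short exact sequence
\[
0\to\CFmComb(G,\s)\xrightarrow{U}\CFmComb(G,\s)\longrightarrow\CFaComb(G,\s)\to 0
\]
recorded earlier in the section, whose associated long exact sequence in homology yields an injection $M/U\cdot M\hookrightarrow\HFaComb(G,\s)$. By Theorem~\ref{thm:fingenhat}, the target is a finite-dimensional $\Field$-vector space, hence so is $M/UM$. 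Lifting a basis to elements $x_1,\ldots,x_n\in M$ and letting $N$ denote the $\Field[U]$-submodule they generate, one has $M=N+U\cdot M$.

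Next I would verify that the Maslov grading on $M$ is bounded above. Using Remark~\ref{rem:altfor} one rewrites
\[
2g([K,E])+\tfrac14 K^2=\tfrac14\min_{I\subset E}K_I^2,\qquad K_I:=K+\sum_{v\in I}2v^*,
\]
so that the Maslov grading formula simplifies to $\grad(U^j\otimes[K,E])=-2j+\vert E\vert+\tfrac14(\min_I K_I^2+\vert\Vertices(G)\vert)$. Each $K_I$ is characteristic, and it is classical that on a negative definite lattice the squares of characteristic covectors attain a finite maximum $m(G)\leq 0$. Combined with $j\geq 0$ and $\vert E\vert\leq\vert\Vertices(G)\vert$, this gives a uniform upper bound on $\grad(U^j\otimes[K,E])$ at the chain level, which descends to $M$.

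With both ingredients in hand, the graded Nakayama step closes the argument. Suppose for contradiction that $M/N\neq 0$; since $M/N$ inherits an upper bound on Maslov grading, pick a nonzero homogeneous element $y\in M/N$ of maximal grading. The relation $M=N+UM$ implies that multiplication by $U$ on $M/N$ is surjective, so $y=U\cdot z$ for some homogeneous $z\in M/N$. But $U$ drops Maslov grading by $2$, so $\grad(z)=\grad(y)+2>\grad(y)$, contradicting maximality. Hence $M=N$ is finitely generated. The nontrivial input is Theorem~\ref{thm:fingenhat}; the main technical point of the present argument is the upper bound on the Maslov grading, and once that is established the Nakayama step is formal.
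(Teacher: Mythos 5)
Your argument is correct, and it uses the same two essential inputs as the paper's proof: the short exact sequence relating $\CFmComb(G,\s)$ and $\CFaComb(G,\s)$ via multiplication by $U$ (together with Theorem~\ref{thm:fingenhat}), and the fact that negative definiteness bounds the Maslov grading from above on the chain level. The finishing move is organized differently, though. The paper works degree by degree: since $\HFaComb(G,\s)$ is finite dimensional, it vanishes in all degrees $p\leq q$ for $q$ sufficiently negative, so by exactness every class of $\HFmComb(G,\s)$ in degree at most $q$ is a $U$-multiple; combined with the upper bound $k_G$ on the grading, $\HFmComb(G,\s)$ is then generated by classes with grading in the window $[q,k_G]$, and the paper concludes by observing that only finitely many chain generators $U^j\otimes[K,E]$ have Maslov grading in that window (again using negative definiteness, since the grading tends to $-\infty$ as $K$ grows). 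Your route replaces this last counting step: you extract from the long exact sequence only that $M/UM$ embeds in $\HFaComb(G,\s)$, hence is finite dimensional, and then run a graded Nakayama argument using the upper bound on the grading. This is slightly cleaner in that it stays entirely at the level of homology and never needs finiteness of the chain group in a bounded grading range; the paper's version, on the other hand, produces an explicit finite generating set of chain-level classes. Two small points you should make explicit for the Nakayama step to be airtight: choose a homogeneous basis of $M/UM$ and homogeneous lifts $x_1,\dots,x_n$, so that $N$ is a graded submodule and $M/N$ is graded; and note that within a fixed spin$^c$ structure the Maslov gradings of all generators lie in a single coset of $\Z$ (since $\frac14(K^2-K'^2)\in 2\Z$ when $K-K'$ is twice an integral class), so a grading set that is bounded above is discrete and a nonzero graded piece of maximal grading indeed exists. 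Both are routine and do not affect the validity of the proof.
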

Notice that the above corollary completes the proof of
Theorem~\ref{thm:structure}.
\begin{proof}[Proof of Corollary~\ref{cor:fingenmin}]
 Recall that there is a short exact sequence 
 \[
 0\to \CFmComb _{q+2} (G, \s ) \stackrel{\cdot U}{\to}\CFmComb _q(G, \s)\to 
 \CFaComb _q(G, \s )\to 0,
 \]
 where the first map is multiplication by $U$ and which induces a long
 exact sequence on the corresponding homologies.  (The lower indices
 now indicate Maslov gradings.) Since by Theorem~\ref{thm:fingenhat}
 the group $\HFaComb (G, \s )$ is finitely generated over $\Field$,
 for $q$ sufficiently negative we have that $\HFaComb _p (G, \s) =0$
 for all $p\leq q$. By exactness this implies that multiplication by
 $U$ provides an isomorphism once $q$ is sufficiently
 negative. Phrased differently, for $q$ negative enough, each element
 $y$ of $\HFmComb _q (G, \s)$ can be written as $y=Ux$ for some $x\in
 \HFmComb _{q+2}(G, \s )$. Since $G$ is negative definite, it is not
 hard to see from the definition of the Maslov grading that there is a
 constant $k_G$ such that each generator $[K,E]$ of $\CFmComb (G)$
 admits Maslov grading $\grad [K,E]$ at most $k_G$. Therefore
 $\HFmComb (G, \s )$ can be generated (over $\Field [U]$) by those
 elements which have Maslov grading between $q$ and $k_G$ (where these
 constants are chosen based on the discussion above). Since $G$ is
 negative definite, there are finitely many generators $U^j \otimes
 [K,E]$ of $\CFmComb (G, \s )$ with Maslov grading between $q$ and
 $k_G$, the claim of the corollary follows.  
\end{proof}

In preparing for the proof of Theorem~\ref{thm:fingenhat},
we verify a weaker version
of Lemma~\ref{lem:ehatok}, which now holds for any vertex.
\begin{lem}
  Suppose that $G$ is a negative definite plumbing graph and $v$ is
  one of its vertices. Then there are integers $m,n$ such that for a
  generator $[K,E]$ of $\CFmComb (G, \s )$
\begin{itemize}
\item $K(v)\geq n$ implies $a_v[K,E]=0$ and $b_v[K,E]>0$, and 
\item $K(v)\leq m$ implies $b_v[K,E]=0$ and $a_v[K,E]>0$.
\end{itemize}
\end{lem}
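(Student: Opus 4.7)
The plan is to exploit the alternative description
$B_v([K,E])=\tfrac{1}{2}(K(v)+v^2)+g([K+2v^*,E-v])$
from Equation~\eqref{eq:refor} together with $A_v([K,E])=g([K,E-v])$, and to show that the difference $B_v-A_v$ is governed by $\tfrac{1}{2}(K(v)+v^2)$ up to an additive error that depends only on $v$. Since $g([K,E])=\min(A_v,B_v)$ and $\min(a_v,b_v)=0$, strict domination $B_v>A_v$ forces $a_v=0$ and $b_v=B_v-A_v>0$, while strict domination $A_v>B_v$ forces $b_v=0$ and $a_v=A_v-B_v>0$, which is exactly the dichotomy asserted by the lemma.

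To establish the uniform bound, I would compute directly from \eqref{eq:fdef}: for any subset $I\subset E-v$ (so $v\notin I$) one has
\[
f([K+2v^*,I])-f([K,I])=v\cdot\sum_{w\in I}w,
\]
since changing $K$ by $2v^*$ shifts each $K(w)$ by $2\,v\cdot w$. This difference is an integer whose absolute value is bounded by the valency $d_v$ of $v$, because distinct vertices of a plumbing tree/forest meet with intersection number $0$ or $1$. Taking the minimum over $I\subset E-v$ of two functions that differ pointwise by at most $d_v$ yields $|g([K+2v^*,E-v])-g([K,E-v])|\leq d_v$, and consequently
\[
\left|B_v([K,E])-A_v([K,E])-\tfrac{1}{2}(K(v)+v^2)\right|\leq d_v,
\]
uniformly in $K$ and $E$.

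With this bound in hand I would set $n:=2d_v-v^2+1$ and $m:=-2d_v-v^2-1$. When $K(v)\geq n$ the bound gives $B_v-A_v\geq\tfrac{1}{2}(K(v)+v^2)-d_v>0$, so $g([K,E])=A_v$, whence $a_v[K,E]=0$ and $b_v[K,E]>0$; symmetrically, when $K(v)\leq m$ we have $A_v-B_v>0$, giving $b_v[K,E]=0$ and $a_v[K,E]>0$. No genuine obstacle is expected: the essential ingredient is the trivial pointwise bound $|v\cdot\sum_{w\in I}w|\leq d_v$ for $I\subset V(G)-v$, and negative definiteness of $G$ enters only through the standing hypotheses of the paper (here it plays no role beyond ensuring that the chain complex $\CFmComb(G,\s)$ is the correct object of study).
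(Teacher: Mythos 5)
Your proposal is correct and follows essentially the same route as the paper: the paper's argument (via Lemma~\ref{lem:ehatok}) also rests on writing $B_v$ as $\min_{I\subset E-v}\bigl\{f(K,I)+\tfrac{1}{2}(v^2+K(v)+2\deg_I v)\bigr\}$ with $0\leq\deg_I v\leq d_v$, which is exactly your comparison of $g([K+2v^*,E-v])$ with $g([K,E-v])$ phrased through Equation~\eqref{eq:refor}. Your explicit constants $m,n$ (and the uniform bound by $d_v$) simply make quantitative the paper's remark that finitely many subsets $E$ occur, so the same thresholds work for all generators.
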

\begin{proof}
  The same argument as given for Lemma~\ref{lem:ehatok} applies: if we
  take $K(v) $ large enough then we get $A_v<B_v$ and hence the first
  claim follows, while for $K(v)$ negative enough we get
  $A_v>B_v$. Notice that since there are only finitely many subsets
  $E$, we can assume that the chosen $m,n$ depend only on $v$ and the
  conclusion of the lemma applies for all $[K,E]$.
\end{proof}

Fix a vertex $v$ and, similarly to $H_0$ of Section~\ref{sec:app},
define $H^v_0\colon \CFaComb (G, \s)\to \CFaComb (G, \s )$ as
\[
H_0^v[K,E]=
\left\{
\begin{array}{ll}
  0 & {\text{if $v\in E$ or $m< K(v)<n$}} \\
  \\
{[K,E\cup v]} 
  & {\text{if $v\not\in E$ and $K(v)\geq n$}} \\
 % & {\text{with $q\in [(T-2)k,Tk), t>0$}} \\
  \\
 { [K-2v^*,E\cup v]} 
  & {\text{if $v\not\in E$ and $K(v)\leq m$}} \\
  %& {\text{with $q\in [(T-2)k,Tk), t<0$}}
\end{array}
\right.
\]
Consider $C^v _0=Id +\partial \circ H_0^v+H^v _0\circ \partial$.  It
is easy to see that $C_0^v$ preserves the Maslov grading of an
element, and by its definition it is chain homotopic to the identity,
hence its image is a subcomplex which is chain homotopy equivalent to
the original chain complex.

\begin{defn}
  Let $B_v(N)$ denote the vector subspace of $\CFaComb (G, \s)$
  spanned by the generators $[K,E]$ which satisfy $\vert K(v)\vert
  \leq N$.  Let $B(N)$ denote the vector subspace generated by $[K,E]$
  with the property that $\vert K(v_i)\vert \leq N$ holds for all $v_i
  \in \Vertices (G)$.
\end{defn}

\begin{lem}\label{lem:hogynezki}
  For a given vertex $v$ there is an $N$ such that the image
  $C_0^v[K,E]$ is contained by $B_v(N)$.
\end{lem}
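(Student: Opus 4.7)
The plan is to do a case analysis on whether $v\in E$ and on the size of $K(v)$, exploiting the two features of $\CFaComb$ that (i) any term carrying a positive power of $U$ vanishes, and (ii) by the preceding lemma, $a_v[K,E]$ and $b_v[K,E]$ vanish in definite $K(v)$-ranges. If necessary I first enlarge $n$ and shrink $m$ so that $n>-v^2$ and $m<v^2$; then for any $E'$ not containing $v$ and any $K$ with $K(v)\geq n$ one has $f([K,I\cup v])>f([K,I])$ for every $I\subseteq E'$, so $g([K,E'\cup v])=g([K,E'])$, and similarly in the dual range. I set $N=\max(|m|,|n|)+2|v^2|+2$, which will absorb the possible shifts of $K(v)$ incurred by the boundary operator.

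First I would treat the case $v\in E$ with $K(v)\geq n$. Here $H_0^v[K,E]=0$, so $C_0^v[K,E]=[K,E]+H_0^v\partial[K,E]$; the part of $\partial[K,E]$ removing $v$ equals $[K,E-v]+U^{b_v[K,E]}[K+2v^*,E-v]$ since $a_v[K,E]=0$ and $b_v[K,E]>0$. The map $H_0^v$ sends $[K,E-v]$ back to $[K,E]$ (canceling the identity term) and annihilates $U^{b_v}[K+2v^*,E-v]$ in $\CFaComb$; the remaining boundary components still contain $v$ in their set and are killed by $H_0^v$. Thus $C_0^v[K,E]=0\in B_v(N)$, and the symmetric range $K(v)\leq m$ is analogous.

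Next I would handle the case $v\not\in E$ with $K(v)\geq n$. Here $H_0^v[K,E]=[K,E\cup v]$, and the component of $\partial H_0^v[K,E]$ removing $v$ equals $[K,E]+U^{b_v[K,E\cup v]}[K+2v^*,E]$; the second summand vanishes in the hat theory and the first cancels the identity term. The components of $\partial H_0^v[K,E]$ removing some $w\neq v$ and the terms of $H_0^v\partial[K,E]$ both take the form $\sum_{w\in E}U^{?}[K,E-w\cup v]+U^{?}[K+2w^*,E-w\cup v]$; invoking the enlarged identity $g([K,E'])=g([K,E'\cup v])$, one checks that the corresponding exponents $a_w[K,E\cup v]$ and $a_w[K,E]$ (respectively $b_w[K,E\cup v]$ and $b_w[K,E]$) agree, so the two sums cancel over $\F$. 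Hence $C_0^v[K,E]=0$, and the dual range $K(v)\leq m$ is analogous.

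Finally, for $m<K(v)<n$ (whether or not $v\in E$) we have $H_0^v[K,E]=0$ and $C_0^v[K,E]=[K,E]+H_0^v\partial[K,E]$; the summand $[K,E]$ manifestly has $|K(v)|<N$, while each nonzero component of $H_0^v\partial[K,E]$ has the form $[K',E']$ with $K'$ differing from $K$ by at most one shift among $\pm 2v^*$ and $\pm 2w^*$ for some $w$ adjacent to $v$, hence $|K'(v)|\leq N$. The main obstacle is the cancellation verification in the second step, where one must compare carefully the $a_w$ and $b_w$ exponents before and after adjoining $v$ to the set; this is where the enlarged bounds on $m$ and $n$ play an essential role.
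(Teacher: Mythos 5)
Your overall strategy matches the paper's: case analysis on $v\in E$ and on the range of $K(v)$, using that positive $U$-powers die in $\CFaComb$ and that the relevant exponents can be matched (you do this via the identity $g([K,E'])=g([K,E'\cup v])$, where the paper appeals to Maslov gradings — a fine variant), and your cases $v\in E$, $K(v)\geq n$, and the middle range are handled correctly. The genuine gap is the final clause of your second step: ``the dual range $K(v)\leq m$ is analogous.'' It is not. For $v\notin E$ and $K(v)\leq m$ one has $H_0^v[K,E]=[K-2v^*,E\cup v]$, so the components of $\partial\circ H_0^v[K,E]$ removing $w\in E$ are of the form $[K-2v^*,E-w\cup v]$ and $[K-2v^*+2w^*,E-w\cup v]$, and to cancel the latter against $H_0^v\circ\partial[K,E]$ you need $H_0^v$ to send $[K+2w^*,E-w]$ to $[K+2w^*-2v^*,E-w\cup v]$, i.e.\ you need $(K+2w^*)(v)\leq m$. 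When $w$ is adjacent to $v$ and $K(v)$ sits at the top of the range (within $2$ of $m$), $(K+2w^*)(v)=K(v)+2>m$, so $H_0^v$ returns $0$ instead, the cancellation fails, and $C_0^v[K,E]$ retains the terms $[K-2v^*+2w^*,E-w\cup v]$. These survivors are exactly why the lemma asserts containment in $B_v(N)$ rather than vanishing of $C_0^v$ outside a bounded box, and the paper's proof is largely devoted to exhibiting them; your argument instead asserts a vanishing that is false. The statement itself is safe with your $N$ — the survivors have value $K(v)-2v^2+2$ at $v$ with $K(v)$ pinned near $m$, hence $|(K-2v^*+2w^*)(v)|\leq |m|+2|v^2|+2\leq N$ — but this borderline analysis must be added.

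Two further points in the same case are glossed over by ``analogous.'' First, the cancellation of the identity term now comes from $\partial_v[K-2v^*,E\cup v]$, so what you need is $b_v[K-2v^*,E\cup v]=0$ and $a_v[K-2v^*,E\cup v]>0$; this concerns the shifted vector $K-2v^*$, whose value at $v$ is $K(v)-2v^2$, so it does not follow formally from the preceding lemma and must be checked (it does hold once $m$ is negative enough; and the check matters, since if $a_v[K-2v^*,E\cup v]$ vanished you would pick up a stray term $[K-2v^*,E]$ whose $v$-value is unbounded below, contradicting containment in any $B_v(N)$). Second, your thresholds $n>-v^2$ and $m<v^2$ are adequate for the range $K(v)\geq n$ but not for the dual range when $v$ is a bad vertex — which is precisely the situation this appendix must cover, since Lemma~\ref{lem:ehatok} already handles good vertices; one needs roughly $m\leq v^2-2d_v$, obtained by shrinking $m$ further, as you allow yourself to do but do not actually quantify.
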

\begin{proof}
  We start by describing the image $C_0^v[K,E]$.  Assume first that
  $v\in E$. Then $H_0^v[K,E]=0$ and hence
  $C_0^v[K,E]=[K,E]+H_0^v\circ \partial [K,E]$. The second term is a
  sum of various terms originating from taking the boundary of $[K,E]$
  with respect to elements $w\in E$.  If $w\neq v$, then $v\in E-w$
  and hence these terms will be annihilated by $H_0^v$.  Therefore we
  need to examine only $H_0^v[K,E-v]$ (or $H^v_0[K+2v^*, E-v])$
  depending on whether $a_v[K,E]=0$ or $b_v[K,E]=0$ (or both)). For
  $K(v)\geq n$ or $K(v)\leq m$  the last term is equal to $[K,E]$,
  hence in this case $C_0^v[K,E]=0$. If $K(v)\in (m,n)$ then the last
  term is either 0 or (if $(K+2v^*)<m$) it is equal to $[K,E]$,
  implying that in this case the value $C_0^v[K,E]$ is either 0 or
  equals $[K,E]$.

  Suppose now that $v\not \in E$. Consider first the case when
  $K(v)\geq n$.  Then $(H_0^v \circ \partial + \partial \circ
  H_0^v)[K,E]$ has a number of terms (of the form $[K, E-w\cup v]$ and
  $[K+2w^*, E-w \cup v]$) which cancel each other, and the only
  remaining term will be equal to $[K,E]$. (In the cancellation we
  rely on the argument that the $U$-powers in front of the various
  terms should match up by Maslov grading reasons.) Adding this term
  to $[K,E]$ we conclude $C_0^v[K,E]=0$ is this case.  Suppose now
  that $K(v)\leq m$. This implies that $H_0^v[K,E]=[K-2v^*, E\cup v]$.
  The computation of $H_0^v\circ \partial$ on $[K,E]$ is slightly more
  complicated: if $K(v)\leq m-2$ then $K+2w^*$ still takes value $\leq
  m$ on $v$ (implying $C_0^v[K,E]=0$), but for $K(v)=m$ the value
  $(K+2w^*)(v)=m+2$, hence $H_0^v$ maps this term to 0, and so $C[K,E]$
  will have coordinates (besides $[K,E]$) of the form $[K-2v^*+2w^*,
    E-w\cup v]$. By choosing $N$ appropriately, these terms will be in
  $B_v(N)$. Finally, if $K(v)\in (m,n)$, then the claim $C_0^v[K,E]\in
  B_v(N)$ follows trivially from the definitions.
\end{proof}

\begin{proof}[Proof of Theorem~\ref{thm:fingenhat}]
  Fix an order $\{ v_1, \ldots , v_n\}$ on the vertex set $\Vertices
  (G)$ of $G$ and consider the map $C=C_0^{v_n}\circ \cdots \circ
  C_0^{v_1}$. All these maps are chain homotopic to the identity,
  hence the image of $C$ is a subcomplex of $\CFaComb (G, \s )$ which
  has homology isomorphic to $\HFaComb (G, \s )$.  By the repeated
  application Lemma~\ref{lem:hogynezki} it follows that there is $N$
  with the property that $C(\CFaComb (G, \s ))\subset B(N)$, and since
  $B(N)$ is obviously a finite dimensional vector space over $\Field$,
  the statement follows at once.
\end{proof}

\end{document}